%!TEX encoding = IsoLatin
\documentclass[10pt]{article}
\usepackage{verbatim}
\usepackage{amsmath}
\usepackage{amssymb}
\usepackage{latexsym}
\usepackage{staves}
\usepackage{marvosym}
\usepackage{eurosym}
\usepackage{phaistos}
\usepackage{verbatim}
\usepackage{ulem}
\usepackage{bm}
\usepackage{amsthm}
\usepackage{color}
\usepackage[T1]{fontenc}
\usepackage[english]{babel}
\usepackage{graphicx}
\usepackage{tikz}
\usetikzlibrary{arrows}
\usetikzlibrary{arrows,decorations.markings}
\usetikzlibrary{mindmap,trees}

%%%%%%%%%%%%%%%%%%%%%%%%%%%%%%%%%%%%%%%%%%%%%%%%%%%%%%%%%%%%%%%
\newtheorem{Theorem}{Theorem}[part]
\newtheorem{Definition}{Definition}[part]
\newtheorem{Proposition}{Proposition}[part]

\newtheorem{Assumption}{Assumption}[part]

\newtheorem{Corollary}{Corollary}[part]
\newtheorem{Remark}{Remark}[part]

% equation numbering section.number
\makeatletter \@addtoreset{equation}{section}

\@addtoreset{Definition}{section}

\@addtoreset{Theorem}{section}

\@addtoreset{Proposition}{section}

\@addtoreset{Property}{section}

\@addtoreset{Assumption}{section}

\@addtoreset{Corollary}{section}

\@addtoreset{Lemma}{section}

\@addtoreset{Remark}{section}

\@addtoreset{Example}{section}

\addtolength{\oddsidemargin}{-0.13 \textwidth}
\addtolength{\textwidth}{0.3 \textwidth}
\addtolength{\topmargin}{-0.05 \textheight}
\addtolength{\textheight}{0.1 \textheight}

\def\essinf{{\rm ess}\, \inf\limits}
\def\esssup{{\rm ess}\,\sup\limits}

\def\Fc{{\cal F}}

\def\05{\frac{1}{2}}
\def\-1{^{-1}}
\def\1{{1\hspace{-1mm}{\rm I}}}
\def\={\;=\;}
\def\.{\;.}

\title{  }
\author{ }

\def\be{\begin{eqnarray}}
\def\ee{\end{eqnarray}}

\def\b*{\begin{eqnarray*}}
\def\e*{\end{eqnarray*}}

\def\E{\mathbb{E}}

\def\R{\R}

\def\P{\mathbb{P}}

\def \ind{{\bf 1}}

%%%%%%%%%%%%%%%%%%%%%%%%%%%%%%%%%%%%%%%%%%%%%%%%%%%%%%%%%%%%%%%%%%%%%%%%%%%%%%%%%%%%%%%%%%%%%%%%%%%%%%%%%%
%%%%%%%%%%%%%%%%%%%%%%%%%%%%%%%%%%%%%%%%%%%%%%%%%%%%%%%%%%%%%%%%%%%%%%%%%%%%%%%%%%%%%%%%%%%%%%%%%%%%%%%%%%

% I lettre

\def \R{I\!\!R}

% \bar

% cali

\def\Fc{{\cal F}}

% \hat

%\underline

% \tilde

% divers raccourcis

\def\-1{^{-1}}

\def\0.5{\frac{1}{2}}

\def\={\;=\;}
\def\.{\;.}

% divers notations

\def\1{{\bf 1}}

% \bf
\def\Ybf{{\bf Y}}

% fonction utilite

% special

%\parindent=0pt

%%%%%Boualem's shortcuts%%%%%%%%%%
\def \sp {\mathcal{S}^p}
\def \spn {\mathcal{S}^{p,\otimes n}}
\def \lpn {\mathbb{L}^{p,\otimes n}}

%%%%%%%%%%%%%%%%%%%%%%%%%%%%%%%%%%%%%%%%%%%%

\title{Zero-sum mean-field Dynkin games:  characterization and convergence}

\author{Boualem Djehiche\thanks{Department of Mathematics, KTH Royal Institute of Technology, Stockholm, Sweden, email: \texttt{boualem@kth.se}} \and Roxana Dumitrescu \thanks{Department of Mathematics, King's College London, United Kingdom, email: \texttt{roxana.dumitrescu@kcl.ac.uk}}}

\begin{document}

\maketitle

\begin{abstract}
 We introduce a zero-sum game problem of mean-field type as an extension of the classical zero-sum Dynkin game problem to the case where the payoff processes might depend on the value of the game and its probability law. We establish sufficient conditions under which such a game admits a value and a saddle point. Furthermore, we provide a characterization of the value of the game in terms of a specific class of doubly reflected backward stochastic differential equations (BSDEs) of mean-field type, for which we derive an existence and uniqueness result. We then introduce a corresponding system of weakly interacting zero-sum Dynkin games and show its well-posedness. Finally, we provide a propagation of chaos result for the value of the zero-sum mean-field Dynkin game.
\end{abstract}

\textit{Keywords:} Dynkin game, Mean-field, Backward SDEs with jumps, Interacting particle system, Propagation of chaos

\textit{2010 Mathematics Subject Classification}: 60H10, 60H07, 49N90

%\tableofcontents

\section{Introduction}
The Dynkin game as introduced in \cite{dyn} is a two-persons game extension (or a variant) of an optimal stopping problem.  It is  extensively used in various applications including wars of attrition (see, e.g. \cite{ghem, hendricks, maynard, ekstrom2}, pre-emption games (see, e.g. \cite{fudenberg}), duels (see, e.g. \cite{bellman, blackwell, shapley} and the surveys by Radzik and Raghavan \cite{radzig} and in financial applications including game options (see, e.g. \cite{bieleckia, ekstrom1, grenadier,h06, kif00} and the survey by Kifer \cite{kif13}. 

The general setup for a zero-sum Dynkin game over a finite time interval $[0,T]$ (henceforth sometimes called Dynkin game)
consists of Player 1 choosing to stop the game at a stopping time $\tau$ and Player 2 choosing to stop it at a stopping  time $\sigma$. At $\tau\wedge \sigma:=\min(\tau,\sigma)$ the game is over and Player 1 pays Player 2 the amount
$$
\mathcal{J}(\tau,\sigma):=\chi_{\tau}\ind_{\{\tau\le \sigma<T\}}+\zeta_{\sigma}\ind_{\{\sigma< \tau \}}+\xi\ind_{\{\tau\wedge \sigma=T\}},
$$
where the payoffs $\chi, \zeta$ and  $\xi$ are given processes satisfying $\chi_t \le \zeta_t, \,0\le t<T$ and $\chi_T=\zeta_T=\xi$.
The objective of Player 1 is to choose $\tau$ from a set of admissible stopping times to minimize the expected value 
$J_{\tau,\sigma}:=\E[\mathcal{J}(\tau,\sigma)]$, while Player 2 chooses $\sigma$ from the same set of admissible stopping times to maximize it. 
In the last few decades, Dynkin games have been extensively studied under several sets of assumptions including \cite{alariot, bay, bensoussan74, bismut,  karatzas01, laraki05, laraki13,lep84, morimoto, stettner, touzi} (the list being far from complete). 
The two main questions addressed in all these papers are: (1) whether the Dynkin game is  fair (or has a value) i.e.  whether the following equality holds.
$$
\underset{\tau}{\inf}\,\underset{\sigma}{\sup}\,J_{\tau,\sigma}=\underset{\sigma}{\sup}\,\underset{\tau}{\inf}\,J_{\tau,\sigma};
$$
(2) whether the game has a saddle-point i.e. whether there exists a pair of admissible strategies (stopping times) $(\tau^*,\sigma^*)$ for which we have
$$
J_{\tau^*,\sigma}\le J_{\tau^*,\sigma*}\le J_{\tau,\sigma^*}.
$$
 By a simple change of variable, the expected payoff can be generalized to include  an instantaneous payoff process $g(t)$, so that 
 $$
 J_{\tau,\sigma}:=\E\left[\mathcal{R}_0(\tau,\sigma)\right].
 $$
 where
 $$
 \mathcal{R}_t(\tau,\sigma):=\int_t^{\tau\wedge \sigma} g(s)ds+\chi_{\tau}\ind_{\{\tau\le \sigma<T\}}+\zeta_{\sigma}\ind_{\{\sigma< \tau \}}+\xi\ind_{\{\tau\wedge \sigma=T\}}, \quad 0\le t\le T.
 $$

Cvitani{\'c} and Karatzas \cite{CK96} were first to establish a link between these Dynkin games and doubly reflected stochastic differential equations (DRBSDE) with driver $g(t)$ and obstacles $\chi$ and $\zeta$, in the Brownian case when $\chi$ and $\zeta$ are continuous processes, which turns out decisive for forthcoming work which considers more general forms of zero-sum Dynkin games including the case where the obstacles are merely right continuous with left limits, and also when the underlying filtration is generated by both the Brownian motion and an independent Poisson random measure, see \cite{hp00, hh06, h06} and the references therein. 

Motivated by problems in which the players use risk measures to evaluate their  payoffs, Dumitrescu {\it et al.} \cite{dqs16} have considered 'generalized' Dynkin games where the 'classical' expectation $\E[\,\cdot\,]$ is replaced the more general nonlinear expectation $\mathcal{E}^g(\cdot)$, induced by a BSDE with jumps and a nonlinear driver $g$.

The link between Dynkin games and DRBSDEs suggested in \cite{CK96} goes as follows. Assume that under suitable conditions on $g,\chi,\zeta$ and $\xi$ defined on the filtered probability space $(\Omega,\mathcal{F},(\mathcal{F}_t)_t,\P)$, carrying out a Brownian motion $B$, there is a unique solution $(Y,Z,K^1,K^2)$ to the following DRBSDE
\begin{align}\label{RBSDE-0}
    \begin{cases}
    (i)\quad  Y_t = \xi +\int_t^T g(s)ds + (K^1_T - K^1_t)-(K^2_T-K^2_t)-\int_t^T Z_s dB_s, \quad  t \in [0,T],\\
    (ii) \quad \zeta_t \geq Y_{t}\geq \chi_t,   \quad \,\, t \in [0,T], \\
    (iii) \quad  \int_0^T (Y_{t}-\zeta_t)dK^1_t = 0, \quad \int_0^T (Y_{t}-\chi_t)dK^2_t = 0,
    \end{cases}
\end{align}
where the processes $K^1$ and $K^2$ are continuous increasing processes, such that $K^1_0=K^2_0=0$.
\begin{comment}
%and the induced measures $dK^1_t$ and $dK^2_t$ over $[0,T]$ are mutually singular.
\end{comment}
Then the process $Y$ admits the following representation
\begin{equation}\label{value-intro}
Y_t=\underset{\tau\ge t}{\essinf}\,\underset{\sigma\ge t}{\esssup}\,\E[\mathcal{R}_t(\tau,\sigma)| \mathcal{F}_t]=\underset{\sigma\ge t}{\esssup}\,\underset{\tau\ge t}{\essinf}\,\E[\mathcal{R}_t(\tau,\sigma)| \mathcal{F}_t],\quad 0\le t\le T.
\end{equation}
The formula \eqref{value-intro} tells us is that the 'upper-value' $\overline{V}_t:=\underset{\tau\ge t}{\essinf}\,\underset{\sigma\ge t}{\esssup}\,\E[\mathcal{R}_t(\tau,\sigma)| \mathcal{F}_t]$ and the 'lower-value' $\underline{V}_t:=\underset{\sigma\ge t}{\esssup}\,\underset{\tau\ge t}{\essinf}\,\E[\mathcal{R}_t(\tau,\sigma)| \mathcal{F}_t]$ of the game coincide and they are equal to the first component of the solution $(Y,Z,K^1,K^2)$ to the DRBSDE \eqref{RBSDE-0}, in which case the zero-sum Dynkin game has a value and is given by 
$$
Y_0=\underset{\tau}{\inf}\,\underset{\sigma}{\sup}\,\E[\mathcal{R}_0(\tau,\sigma)]=\underset{\sigma}{\sup}\,\underset{\tau}{\inf}\,\E[\mathcal{R}_0(\tau,\sigma)].
$$

\medskip
\paragraph{Main contributions.} \textcolor{black}{In this paper, motivated by applications to life insurance (see an example below)}, we suggest a generalization of the above zero-sum Dynkin game, where we consider the case where the payoff depends on the 'values' $V$ of the game and their probability laws $\P_{V}$:
$$
g(t):=f(t,V_t,\P_{V_t}),\quad \chi_t:=h_1(t,V_t,\P_{V_t}),\quad \zeta_t:=h_2(t,V_t,\P_{V_t})
$$
so that, for every $t\in [0,T]$,
$$
\mathcal{R}_t(\tau,\sigma,V):=\int_t^{\tau\wedge \sigma} f(s,V_s,\P_{V_s})ds+h_1(\tau, V_{\tau}, \P_{V_s}|_{s=\tau})\ind_{\{\tau\le \sigma<T\}}+h_2(\sigma,V_{\sigma},\P_{V_s}|_{s=\sigma})\ind_{\{\sigma< \tau \}}+\xi\ind_{\{\tau\wedge \sigma=T\}}.
$$
\begin{comment}\textcolor{red}{for the indicator functions shouldn't rather be
$$
\mathcal{R}_t(\tau,\sigma,V):=\int_t^{\tau\wedge \sigma} f(s,V_s,\P_{V_s})ds+h_1(\tau, V_{\tau}, \P_{V_s}|_{s=\tau})\ind_{\{\tau\le \sigma<T\}}+h_2(\sigma,V_{\sigma},\P_{V_s}|_{s=\sigma})\ind_{\{\sigma< \tau\}}+\xi\ind_{\{\tau\wedge \sigma=T\}}.
$$
\end{comment}
The upper and lower values of the game satisfy 
\begin{equation}\label{recur-0}
\overline{V}_t:=\underset{\tau\ge t}{\essinf}\,\underset{\sigma\ge t}{\esssup}\,\E[\mathcal{R}_t(\tau,\sigma,\overline{V})| \mathcal{F}_t],\qquad \underline{V}_t:=\underset{\sigma\ge t}{\esssup}\,\underset{\tau\ge t}{\essinf}\,\E[\mathcal{R}_t(\tau,\sigma,\underline{V})| \mathcal{F}_t]. 
\end{equation}
Due to the dependence of the payoff on the probability law of the 'value', we call the game whose upper and lower values satisfy \eqref{recur-0}, a \textit{zero-sum mean-field Dynkin game}. \textcolor{black}{This new type of games are  more involved then the standard Dynkin games, since the first question one has to answer is the existence of the upper (resp. lower) value. The first main result of this paper is to show that, when the underlying filtration is generated by both the Brownian motion and an independent Poisson random measure, under mild regularity assumptions on the payoff process, the upper and lower values $\overline{V}$ and $\underline{V}$ of the game exist and are unique. Then, we show this game has  a value (i.e. $\overline{V}_t=\underline{V}_t, \,0\le t\le T,\,\, \P$-a.s.), which can be characterized in terms of the component $Y$ of the solution of \textit{a new class of mean-field doubly reflected BSDEs}, whose obstacles might depend on the solution and its distribution (see Section \ref{MFDG}). We prove the results in a general setting when the driver of the doubly reflected BSDE might also depend on $z$ and $u$ and the method we propose to show the existence of the value of the game (and the solution of the corresponding mean-field doubly reflected BSDE) is based on a new approach which avoids the standard penalization technique. We also provide sufficient conditions under which the game admits \textit{a saddle-point}, the main difficulty in our framework being due to the dependence of the obstacles on the value of the game. The second main result consists in proving the existence of the value of the following system of {\it interacting Dynkin games} which take the form
\begin{equation}\label{recur-1}
\overline{V}^{i,n}_t:=\underset{\tau\ge t}{\essinf}\,\underset{\sigma\ge t}{\esssup}\,\E[\mathcal{R}^{i,n}_t(\tau,\sigma,\overline{V}^{n})| \mathcal{F}_t],\qquad \underline{V}^{i,n}_t:=\underset{\sigma\ge t}{\esssup}\,\underset{\tau\ge t}{\essinf}\,\E[\mathcal{R}^{i,n}_t(\tau,\sigma,\underline{V}^{n})| \mathcal{F}_t], 
\end{equation}
where
$$\begin{array}{lll}
\mathcal{R}^{i,n}_t(\tau,\sigma,V^{n}):=\int_t^{\tau\wedge \sigma} f(s,V^{i,n}_s,\frac{1}{n}\sum_{j=1}^n\delta_{V^{j,n}_s})ds+h_1(\tau, V^{i,n}_{\tau}, \frac{1}{n}\sum_{j=1}^n\delta_{V^{j,n}_s}|_{s=\tau})\ind_{\{\tau\le \sigma<T\}} \\ \qquad\qquad \qquad\qquad +h_2(\sigma,V^{i,n}_{\sigma}, \frac{1}{n}\sum_{j=1}^n\delta_{V^{j,n}_s}|_{s=\sigma})\ind_{\{\sigma< \tau\}}+\xi^{i,n}\ind_{\{\tau\wedge \sigma=T\}}.
\end{array}
$$
and providing sufficient conditions under which a saddle point exists. We also show the link with the solution of a system of interacting doubly reflected BSDEs, with obstacles depending on the solution, for which the well-posedness is addressed in the general case when $f$ might also depend on $z$ and $u$. The third main contribution is a convergence result, which shows that, under appropriate assumptions on the involved coefficients, the value $V$ is limit (as $n\to \infty$), under appropriate norms, of ${V}^{i,n}$ (the value of the interacting zero-sum Dynkin game). A related propagation of chaos type result is derived.}

\subsection{Motivating example from life insurance} One of the main motivations of studying the class of zero-sum mean-field Dynkin games \eqref{recur-0} and \eqref{recur-1} is the pricing of the following prospective reserves in life insurance.
Consider a  portfolio of a large number $n$ of homogeneous life insurance policies $\ell$. Denote by $Y^{\ell,n}$ the prospective reserve of each policy $\ell=1,\ldots,n$. Life insurance is a business which reflects the cooperative aspect of the pool of insurance contracts.  To this end, the prospective reserve is constructed and priced based on {\it the averaging principal} where an individual reserve $Y^{\ell,n}$ is compared with the average reserve $\frac{1}{n}\sum_{j=1}^n Y^{j,n}$ a.k.a. {\it model point} among actuaries. In particular, in nonlinear reserving, the driver $f$ i.e. the reward per unit time, the solvency/guarantee level $h_1$ (lower barrier) and the allocated bonus level $h_2$ (upper barrier) i.e. the  fraction of the value of the global market portfolio of the insurance company allocated to the prospective reserve, depend on the reserve of the particular contract and on the average reserve characteristics over the $n$ contracts (since $n$ is very large, averaging over the remaining $n-1$ policies has roughly the same effect as averaging over all $n$ policies): For each $\ell=1,\ldots,n$,
\begin{equation}\label{MF-N}\begin{array}{lll}
f(t,Y^{\ell,n}_t,(Y^{m,n}_t)_{m\neq\ell}):=\alpha_t-\delta_tY_t+\beta_t\max(\theta_t, Y^{\ell,n}_t-\frac{1}{n}\sum_{k=1}^nY^{k,n}_t),\\
h_1(Y^{\ell,n}_t,(Y^{m,n}_t)_{m\neq\ell})=\left(u-c^1(Y^{\ell,n}_t)+\mu(\frac{1}{n}\sum_{k=1}^nY^{k,n}_t-u)^+\right) \wedge S_t, \\
h_2(Y^{\ell,n}_t,(Y^{m,n}_t)_{m\neq\ell})=\left(c^2(Y^{\ell,n}_t)+c^3(\frac{1}{n}\sum_{k=1}^nY^{k,n}_t)\right) \vee S^{\prime}_t,
\end{array}
\end{equation}
where $0<\mu<1$,  $S$ is the value of the 'benchmark' global portfolio of the company and $S^{\prime}$  some higher value of that global portfolio used by the company as a reference (threshold) to apply the bonus allocation program, where at each time $t$, $S_t\le S^{\prime}_t$ and the involved functions $c^1,c^2$ and the parameters $u,\mu$ are chosen so that $h_1(Y^{\ell,n}_t,(Y^{m,n}_t)_{m\neq\ell})\le h_2(Y^{\ell,n}_t,(Y^{m,n}_t)_{m\neq\ell})$. The driver $f$ includes the discount rate $\delta_t$ and  deterministic positive functions   
$\alpha_t, \beta_t$ and $\theta_t$ which constitute the elements of the withdrawal option. The solvency level $h_1$ is constituted of a required minimum of a benchmark return (guarantee) $u$, a reserve dependent management fee  $c^1(Y^{\ell,n}_t)$ (usually much smaller than $u$) and  a 'bonus' option $(\frac{1}{n}\sum_{k=1}^nY^{k,n}_t-u)^+$ which is the possible surplus realized by the average of all involved contracts. The allocated bonus level $h_2$ is usually prescribed by the contract and includes a function $c^2(Y^{\ell,n}_t)$ which reflects a possible bonus scheme based the individual reserve level and  another function $c^3(\frac{1}{n}\sum_{k=1}^nY^{k,n}_t)$ which reflects the average reserve level.

The Dynkin game is between two players, the insurer, Player (I), and each of the $N$ insured (holders of the insurance contracts), Player ($H_{\ell}$), where each of them can decide to stop it i.e. exit the contract at a random time of her choice. Player (I) stops the game when the solvency level $h_1(Y^{\ell,n}_t,(Y^{m,n}_t)_{m\neq\ell})$ 
of the $\ell$th player is reached, while Player ($H_{\ell}$) stops the game when its allocated bonus level $h_2(Y^{\ell,n}_t,(Y^{m,n}_t)_{m\neq\ell})$ is reached.

The prospective reserve $Y^{\ell,n}$ is an upper value (resp. lower value) for the game if Player ($H_{\ell}$) (resp. Player (I)) acts first and then Player (I) (resp. Player ($H_{\ell}$)) chooses an optimal response.

\medskip Sending $n$ to infinity in \eqref{MF-N}, yields the following forms of {\it upper or lower value dependent} payoffs of the prospective reserve of a representative (model-point) life insurance contract:
\begin{equation*}\begin{array}{lll}
f(t,Y_t,\E[Y_t]):=\alpha_t-\delta_tY_t+\beta_t\max(\theta_t, Y_t- \E[Y_t]),\\
h_1(Y_t, \E[Y_t])=\left(u^2-c^1(Y_t)+\mu^1(\E[Y_t]-u)^+\right)\wedge S_t, \\ 
h_2(Y_t, \E[Y_t])=\left(c^2(Y_t)+c^3(\E[Y_t])\right)\vee S^{\prime}_t.
\end{array}
\end{equation*}

\subsection{Organization of the paper} In Section \ref{MFDG} we introduce \textit{a class of zero-sum mean-field Dynkin games}. Under mild regularity assumptions on the coefficients involved in the payoff function, we show existence and uniqueness of the upper and lower values of the game. We also show that the game has a value and characterize it as the unique solution to a mean-field doubly reflected BSDE. Moreover, we give a sufficient condition on the obstacles which guarantees existence of a saddle-point. In Section \ref{WIDG}, we introduce a system of $n$ interacting zero-sum Dynkin games and show that it has a value and characterize it as the unique solution to a system of interacting doubly reflected BSDEs. Furthermore, we give sufficient conditions on the barriers which guarantee existence of a saddle-point for each component of the system. Finally, in Section \ref{chaos}, we show that the limit, as $n\to \infty$, of the value of system of interacting zero-sum Dynkin games converges to the value of the zero-sum mean-field Dynkin game in an appropriate norm. As a consequence of that limit theorem,  we establish a propagation of chaos property for the value of the system of interacting zero-sum Dynkin games.

\subsection*{Notation.}
\quad Let $(\Omega, \mathcal{F},\mathbb{P})$ be a complete probability space. $B=(B_t)_{0\leq t\leq T}$  is a standard $d$-dimensional Brownian motion and $N(dt,de)$ is a Poisson random measure, independent of $B$, with compensator $\nu(de)dt$ such that $\nu$ is a $\sigma$-finite measure on $\R^*$, equipped with its Borel field $\mathcal{B}(\R^*)$. Let $\tilde{N}(dt,du)$ be its compensated process. We denote by $\mathbb{F} = \{\mathcal{F}_t\}$ the natural filtration associated with $B$ and $N$. Let $\mathcal{P}$ be the $\sigma$-algebra on $\Omega \times [0,T]$ of $\mathcal{F}_t$-progressively measurable sets. Next, we introduce the following spaces with $p>1$:
\begin{itemize}
    \item $\mathcal{T}_t$ is the set of $\mathbb{F}$-stopping times $\tau$ such that
    $\tau \in [t,T]$ a.s.
    \item \textcolor{black}{$L^p(\mathcal{F}_T)$ is the set of random variables $\xi$ which are $\mathcal{F}_T$-measurable and $\mathbb{E}[|\xi|^p]<\infty$.}
    \item $\mathcal{S}_{\beta}^p$ is the set of real-valued c{\`ad}l{\`a}g adapted processes $y$ such that $||y||^p_{\mathcal{S}_{\beta}^p} :=\mathbb{E}[\underset{0\leq u\leq T}{\sup} e^{\beta p s}|y_u|^p]<\infty$. We set $\mathcal{S}^p=\mathcal{S}_{0}^p$.
    
    \item $\mathcal{S}_{\beta,i}^p$ is the subset of $\mathcal{S}_{\beta}^p$ such that the process $k$ is non-decreasing and $k_0 = 0$. We set $\mathcal{S}_{i}^p=\mathcal{S}_{0,i}^p$.
    
    \item $\mathbb{L}_{\beta}^p$ is the set of real-valued c{\`ad}l{\`a}g adapted processes $y$ such that $||y||^p_{\mathbb{L}_{\beta}^p} :=\underset{\tau\in\mathcal{T}_0}{\sup}\E[e^{\beta p \tau}|y_{\tau}|^p]<\infty$. $\mathbb{L}_{\beta}^p$ is a Banach space (see Theorem 22 in \cite{DM82}, pp. 60 when $p=1$). We set $\mathbb{L}^p=\mathbb{L}_{0}^p$. 

    \item $\mathcal{H}^{p,d}$ is the set of $\mathcal{P}$-measurable, $\R^d$-valued processes such that  $\mathbb{E}[(\int_0^T|v_s|^2ds)^{p/2}] <\infty$.
    
     \item $L^p_{\nu}$ is the set of measurable functions $l:\R^*\to \R$ such that $\int_{\R^*}|l(u)|^p\nu(du)<+\infty$.
    The set $L^2_{\nu}$ is a Hilbert space equipped with the scalar product $\langle\delta,l\rangle_{\nu}:=\int_{\R^*}\delta(u)l(u)\nu(du)$ for all $(\delta,l)\in L^2_{\nu}\times L^2_{\nu}$, and the norm $|l|_{\nu,2}:=\left(\int_{R^*}|l(u)|^2\nu(du)\right)^{1/2}$. If there is no risk for confusion, we sometimes denote $|l|_{\nu,2}:=|l|_{\nu}$.
    
    \item $\mathcal{B}(\R^d)$ (resp. $\mathcal{B}(L^p_{\nu}))$ is the Borel $\sigma$-algebra on $\R^d$ (resp. on $L^p_{\nu}$).
    
    \item $\mathcal{H}^{p,d}_{\nu}$ is the set of predictable processes $l$, i.e. measurable 
    \[l:([0,T]\times \Omega \times \R^*,\mathcal{P}\otimes \mathcal{B}(\R^*))\to (\R^d, \mathcal{B}(\R^d)); \quad (\omega,t,u)\mapsto l_t(\omega,u)\]
    such that $\|l\|_{\mathcal{H}^{p,d}_{\nu}}^p:=\mathbb{E}\left[\left(\int_0^T\sum_{j=1}^d|l^j_t|^2_{\nu}dt\right)^{\frac{p}{2}}\right]<\infty$.  For $d=1$, we denote  $\mathcal{H}^{p,1}_{\nu}:=\mathcal{H}^{p}_{\nu}$.
    
    \item $\mathcal{P}_p(\R)$ is the set of probability measures on $\R$ with finite $p$th moment. We equip the space $\mathcal{P}_p(\R)$ with the $p$-Wasserstein distance denoted by $\mathcal{W}_p$ and defined as
    \begin{align*}
        \mathcal{W}_p(\mu, \nu) := \inf \left\{\int_{\R \times \R} |x - y|^p \pi(dx, dy) \right\}^{1/p},
    \end{align*}
    where the infimum is over probability measures $\pi \in\mathcal{P}_p(\R\times \R)$ with first and second marginals $\mu$ and $\nu$, respectively.

\end{itemize}

\section{Zero-sum mean-field Dynkin games and link with mean-field doubly reflected BSDEs}\label{MFDG}

\noindent In this section, we introduce a new class of zero-sum mean-field Dynkin games which have the particularity that the payoff depends on the value of the game, which is shown to exist under specific assumptions. For this purpose, we first recall the notion of $f$-conditional expectation introduced by Peng (see e.g. \cite{Peng07}), which is denoted by $\mathcal{E}^{f}$ and extends the standard conditional expectation to the nonlinear case.

\begin{Definition}[Conditional $f$-expectation] We recall that if $f$ is a Lipschitz driver and $\xi$ a random variable belonging to $L^p(\mathcal{F}_T)$, then there exists a unique solution $(X,\pi, \psi) \in \mathcal{S}^p \times \mathcal{H}^{p,d} \times \mathcal{H}_\nu^{p}$ to the following BSDE
\begin{align*}
X_t=\xi+\int_t^T f(s,X_s,\pi_s,\psi_s)ds-\int_t^T\pi_sdW_s -\int_t^T \int_{\R^\star}\pi_s(de) d\tilde{N}(ds,de)\text{ for all } t \in [0,T] {\rm \,\, a.s. }
\end{align*}
For $t\in [0,T]$, the nonlinear operator $\mathcal{E}^{f}_{t,T} : L^p(\mathcal{F}_T) \mapsto L^p(\mathcal{F}_t)$ which maps a given terminal condition $\xi \in L^p(\mathcal{F}_T)$ to the first component $X_t$ at time $t$ of the solution of the above BSDE is called conditional $f$-expectation at time $t$. It is also well-known that this notion can be extended to the case where the (deterministic) terminal time $T$ is replaced by a general stopping time $\tau \in \mathcal{T}_0$ and $t$ is replaced by a stopping time $S$ such that $S \leq \tau$ a.s.
\end{Definition}

\noindent In the sequel, given a Lipschitz continuous driver $f$ and a process $Y$, we denote by $f \circ Y$ the map $(f \circ Y)(t,\omega,y,z,u):= f(t,\omega,y,z,u, \mathbb{P}_{Y_t})$.\\

\noindent We introduce the following definitions.

\begin{Definition} Consider the map $\underline{\Psi}:\mathbb{L}_\beta^p \longrightarrow \mathbb{L}_\beta^p$ given by $$\underline{\Psi}(Y)_t:=\underset{\tau \in \mathcal{T}_t}{\esssup}\,\underset{\sigma \in \mathcal{T}_t}{\essinf}\,\mathcal{E}_{t,\tau\wedge \sigma}^{f \circ Y}[h_1(\tau, Y_\tau, \mathbb{P}_{Y_s}|_{s=\tau})\ind_{\{\tau \leq \sigma<T\}}+h_2(\sigma, Y_\sigma, \mathbb{P}_{Y_s}|_{s=\sigma})\ind_{\{\sigma < \tau\}}+\xi\ind_{\{\sigma \wedge \tau = T\}}].$$ 
We define the \textit{first} or \textit{lower value function} of the zero-sum mean-field Dynkin game, denoted by $\underline{V}$, as the \textit{fixed point} of the application $\underline{\Psi}$,
i.e. it satisfies 
\begin{align}\label{lower}
    \underline{V}_t=\underline{\Psi}(\underline{V})_t.
\end{align}
\end{Definition}
\begin{Definition} Let the map $\overline{\Psi}:\mathbb{L}_\beta^p \longrightarrow \mathbb{L}_\beta^p$ be given by
$$\overline{\Psi}(Y)_t:=\underset{\sigma \in \mathcal{T}_t}{\essinf}\, \underset{\tau \in \mathcal{T}_t}{\esssup}\,\mathcal{E}_{t,\tau\wedge \sigma}^{f \circ Y}[h_1(\tau, Y_\tau, \mathbb{P}_{Y_s}|_{s=\tau})\ind_{\{\tau \leq \sigma<T\}}+h_2(\sigma, Y_\sigma, \mathbb{P}_{Y_s}|_{s=\sigma})\ind_{\{\sigma < \tau\}}+\xi\ind_{\{\sigma \wedge \tau = T\}}].$$
The \textit{second} or \textit{upper value function} of the zero-sum mean-field Dynkin game, denoted by $\overline{V}$, as the \textit{fixed point} of the application $\overline{\Psi}$, i.e. it satisfies 
\begin{align}\label{upper}
    \overline{V}_t=\overline{\Psi}(\overline{V})_t.
\end{align}
\end{Definition}
\begin{Definition} The zero-sum mean-field Dynkin game is said to admit a \textit{common value function}, called the \textit{value of the game},
if $\underline{V}$ and $\overline{V}$ exist and $\underline{V}_t=\overline{V}_t$ for all $t \in [0,T]$, $\P$-a.s.
\end{Definition}
More precisely, the value of the game, denoted by $V$, corresponds to the common fixed point of the applications $\overline{\Psi}$ and $\underline{\Psi}$, i.e. it satisfies $V_t=\overline{\Psi}(V)_t=\underline{\Psi}(V)_t$.
The main result of this section consists in providing conditions under which the game admits a value, showing the existence and uniqueness of the solution of a \textit{new class} of mean-field doubly reflected BSDEs given below and establishing the connection between the value of the mean-field Dynkin game and the solution of the mean-field  reflected BSDE.\\

\noindent Let us introduce the new class of doubly reflected BSDEs of mean-field type associated with the driver $f$, the terminal condition $\xi$, lower barrier $h_1$ and upper barrier $h_2$.

\begin{Definition}
We say that the quadruple of progressively measurable processes $(Y_t, Z_t, U_t, K^2_t, K_t^2)_{t\leq T}$ is a solution of the mean-field reflected BSDE associated with $(f,\xi, h_1, h_2)$ if, when $p\geq 2$,
\begin{align}\label{BSDE1}
    \begin{cases}
    (i) \quad Y \in \mathcal{S}^p,  Z\in \mathcal{H}^{p,d}, U\in \mathcal{H}^{p}_{\nu}, K^1 \in \mathcal{S}_i^p \text{ and }  K^2 \in \mathcal{S}_i^p\\
    (ii)\quad  Y_t = \xi +\int_t^T f(s,Y_{s}, Z_{s}, U_{s}, \P_{Y_s})ds + (K^1_T - K^1_t)-(K^2_T-K^2_t) \\ \qquad\qquad -\int_t^T Z_s dB_s - \int_t^T \int_{\R^\star} U_s(e) \tilde N(ds,de), \quad  t \in [0,T],\\
    (iii) \quad h_2(t,Y_{t},\P_{Y_t}) \geq Y_{t}\geq h_1(t,Y_{t},\P_{Y_t}), \quad \forall t \in [0,T],\\
    (iv) \quad  \int_0^T (Y_{t-}-h_1(t,Y_{t-},\P_{Y_{t-}}))dK^1_t = 0, \,\,\,\int_0^T (Y_{t-}-h_2(t,Y_{t-},\P_{Y_{t-}}))dK^2_t = 0, \\ (v) \quad dK^1_t \perp dK^2_t,\quad t \in [0,T]. \quad %\text{\textcolor{red}{add a definition and a reason for this assumption}}
    \end{cases}
\end{align}
\end{Definition}
The last condition $dK^1_t \perp dK^2_t$ is imposed in order to ensure the uniqueness of the solution. For the reader's convenience, we recall here the definition of a mutually singular measures associated with increasing predictable processes.
\begin{Definition}
Let $A=(A_t)_{0 \leq t \leq T}$ and $A'=(A'_t)_{0 \leq t \leq T}$ belonging to $\mathcal{S}_{i}^p$. The measures $dA_t$ and $dA'_t$ are said to be mutually singular and we write $dA_t \perp dA'_t$ if there exists $D \in \mathcal{P}$ such that 
$$\mathbb{E}\left[\int_0^T 1_{D^c}dA_t\right]=\mathbb{E}\left[\int_0^T 1_{D}dA'_t\right]=0.$$
\end{Definition}
We make the following assumption on $(f,h_1,h_2,\xi)$.
\begin{Assumption} \label{generalAssump}
The coefficients $f,h_1$, $h_2$ and $\xi$ satisfy the following properties.
\begin{itemize} 
    \item[(i)] $f$ is a mapping from $[0,T] \times \Omega \times \R \times \R  \times L^p_{\nu} \times \mathcal{P}_p(\R)$ into $\R$ such that
    \begin{itemize}
        \item[(a)] the process $(f(t,0,0,0,\delta_0))_{t\leq T}$ is $\mathcal{P}$-measurable and belongs to $\mathcal{H}^{p,1}$;
        \item[(b)] $f$ is Lipschitz continuous w.r.t. $(y,z,u,\mu)$ uniformly in $(t,\omega)$, i.e. there exists a positive constant $C_f$ such that $\mathbb{P}$-a.s. for all $t\in [0,T],$ 
        \[|f(t,y_1,z_1,u_1,\mu_1)-f(t,y_2,z_2,u_2,\mu_2)|\leq C_f(|y_1-y_2|+|z_1-z_2|+|u_1-u_2|_{\nu}+\mathcal{W}_p(\mu_1, \mu_2))\]
        for any $y_1,y_2\in \R,$ $z_1, z_2 \in \R$, $u_1, u_2 \in L^p_{\nu}$ and $\mu_1, \mu_2 \in \mathcal{P}_p(\R) $.
        \item[(c)] Assume that $d\mathbb{P} \otimes dt$ a.e. for each $(y,z,u_1,u_2,\mu) \in \R^2 \times (L^2_\nu)^2 \times \mathcal{P}_p(\R),$
        \begin{align}
            f(t,y,z,u_1,\mu)-f(t,y,z,u_2,\mu) \geq \langle\gamma_t^{y,z,u_1,u_2,\mu},l_1-l_2\rangle_\nu,
        \end{align}
        with 
        \begin{align*}
        \gamma:[0,T] \times \Omega \times \R^2 \times (L^2_\nu)^2 \times \mathcal{P}_p(\R) \mapsto L^2_\nu;\\
        (\omega,t,y,z,u_1,u_2,\mu) \mapsto \gamma_t^{y,z,u_1,u_2,\mu}(\omega,\cdot)
        \end{align*}
        $\mathcal{P} \otimes \mathcal{B}(\R^2) \otimes \mathcal{B}((L^2_\nu)^2) \otimes \mathcal{B}(\mathcal{P}_p(\R))$ measurable satisfying $\|\gamma_t^{y,z,u_1,u_2,\mu}(\cdot)\|_\nu \leq C$ for all\\ $(y,z,u_1,u_2,\mu) \in \R^2 \times (L^2_\nu)^2 \times \mathcal{P}_p(\R)$, $d\mathbb{P} \otimes dt$-a.e., where $C$ is a positive constant, and such that $\gamma_t^{y,z,u_1,u_2,\mu}(e) \geq -1$, for all $(y,z,u_1,u_2,\mu) \in \R^2 \times (L^2_\nu)^2 \times \mathcal{P}_p(\R)$, $d\mathbb{P} \otimes dt \otimes d\nu(e)$-a.e.
        \end{itemize}
    \item[(ii)] $h_1$, $h_2$ are measurable mappings from $[0,T] \times \Omega \times \R \times \mathcal{P}_p(\R)$ into $\R$ such that $h_1(t,y,\mu):=\tilde{h}_1(t,y,\mu) \wedge S_t$ and $h_2(t,y,\mu):=\tilde{h}_2(t,y,\mu) \vee S'_t$,  where
    \begin{itemize}
        \item[(a)] $S_t$ and $S'_t$ are quasimartingales  in $S^p$, with $S_t \leq S'_t$ $\mathbb{P}$-a.s. for all $0 \leq t \leq T$,
        \item[(b)] the processes $\left(\sup_{(y,\mu)\in \R\times\mathcal{P}_p(\R)}|h_1(t,y,\mu)|\right) _{0\le t\leq T}$ and $\left(\sup_{(y,\mu)\in \R\times\mathcal{P}_p(\R)}|h_2(t,y,\mu)|\right) _{0\le t\leq T}$ belong to $\mathcal{S}^p$,
        \item[(c)] $\tilde{h}_1$ (resp. $\tilde{h}_2$) is Lipschitz w.r.t. $(y,\mu)$ uniformly in $(t,\omega)$, i.e. there exists two positive constants $\gamma_1$ (resp. $\kappa_1$ ) and $\gamma_2$ (resp. $\kappa_2$ ) such that $\mathbb{P}$-a.s. for all $t\in [0,T],$ 
        \[|\tilde{h}_1(t,y_1,\mu_1)-\tilde{h}_1(t,y_2,\mu_2)|\leq \gamma_1|y_1-y_2|+\gamma_2\mathcal{W}_p(\mu_1, \mu_2),\]
         \[|\tilde{h}_2(t,y_1,\mu_1)-\tilde{h}_2(t,y_2,\mu_2)|\leq \kappa_1|y_1-y_2|+\kappa_2\mathcal{W}_p(\mu_1, \mu_2)\] for any $y_1,y_2\in \R$ and $\mu_1,\mu_2 \in \mathcal{P}_p(\R)$, 
        \item[(d)] $h_1(t,y,\mu) \leq h_2(t,y,\mu)$ for all $y \in \R$ and $\mu \in \mathcal{P}_p(\R)$. 
    \end{itemize}
    \item[(iii)] $\xi \in L^p(\Fc_T)$ and satisfies $h_2(T,\xi,\P_{\xi}) \geq \xi \geq h_1(T,\xi,\P_{\xi})$.
    \end{itemize}
\end{Assumption}

\begin{Remark}\label{moko}
The above assumptions (ii) on the obstacles $h_1$ and $h_2$ imply the Mokobozki condition 
$$
h_1(t,y,\mu)\le  S^{\prime}_t\le h_2(t,y,\mu), \quad \text{for all} \,\,\, (t,y,\mu)\in [0,T]\times \R \times \mathcal{P}_p(\R), \quad \P\text{-a.s.},
$$
since $S^{\prime}$ is a quasimartingale.
\end{Remark}
\begin{Remark}
We note that if $\tilde{h}_1$ (resp. $\tilde{h}_2$) depends only on $\mu$ i.e. $\tilde{h}_1(t,y,\mu)=\tilde{h}_1(t,\mu)$ (resp.  $\tilde{h}_2(t,y,\mu)=\tilde{h}_2(t,\mu)$), the domination condition (ii)(b) can be dropped.
\end{Remark}

\begin{Theorem}[\textit{Existence of the value and link with mean-field doubly reflected BSDEs}]\label{Dynkin}
Suppose that Assumption \ref{generalAssump} is in force for some $p\geq 2$. Assume that  $\gamma_1$, $\gamma_2$, $\kappa_1$ and $\kappa_2$ satisfy 
\begin{align} \label{ExistenceCond}
    \gamma_1^p+\gamma_2^p+\kappa_1^p+ \kappa_2^p < 2^{3-\frac{5p}{2}}. 
\end{align}
Then,
\begin{itemize}
\item[(i)]The \textit{mean-field Dynkin game} admits a value $V\in \mathcal{S}^{p}$.  

\item[(ii)] The mean-field doubly reflected BSDEs \eqref{BSDE1} has a unique solution $(Y,Z,U,K^{1},K^{2})$ in $\mathcal{S}^{p} \otimes \mathcal{H}^{p} \otimes \mathcal{H}_{\nu}^{p} \otimes \mathcal{S}_{i}^{p} \otimes \mathcal{S}_{i}^p$.
\item[(iii)] We have $V_\cdot=Y_\cdot$.
\end{itemize}
\end{Theorem}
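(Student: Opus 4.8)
The plan is to recast the whole problem as a single fixed-point equation obtained by freezing the law and obstacle dependence, and then to run a Banach fixed-point argument whose contraction constant is exactly governed by \eqref{ExistenceCond}.

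First I would observe that for a fixed process $Y\in\mathbb{L}_\beta^p$ the frozen data are a Lipschitz driver $f\circ Y$, the terminal value $\xi$, and two obstacle processes $\chi_t:=h_1(t,Y_t,\P_{Y_t})$ and $\zeta_t:=h_2(t,Y_t,\P_{Y_t})$. By Assumption \ref{generalAssump}(ii) these satisfy $\chi_t\le\zeta_t$, belong to $\mathcal{S}^p$, and, thanks to Remark \ref{moko}, obey the Mokobozki condition $\chi_t\le S'_t\le\zeta_t$. Hence the frozen problem is a standard generalized (nonlinear) Dynkin game in the sense of \cite{dqs16}, which admits a value: the upper and lower frozen values coincide and are equal to the first component of the associated standard doubly reflected BSDE with data $(f\circ Y,\xi,\chi,\zeta)$. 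In our notation this says precisely that $\underline{\Psi}(Y)=\overline{\Psi}(Y)=:\Psi(Y)$ for every $Y$, that $\Psi(Y)\in\mathcal{S}^p\subset\mathbb{L}_\beta^p$, and that $\Psi$ maps $\mathbb{L}_\beta^p$ into itself. Consequently the mean-field game admits a value if and only if $\Psi$ has a fixed point, in which case $\underline{V}=\overline{V}=V$ is that fixed point; moreover any solution of \eqref{BSDE1} has its $Y$-component equal to a fixed point of $\Psi$, while any fixed point of $\Psi$ generates, through the frozen doubly reflected BSDE, a full solution $(Y,Z,U,K^1,K^2)$ of \eqref{BSDE1}.

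The core step is to prove that $\Psi$ is a strict contraction on $\mathbb{L}_\beta^p$ for $\beta$ large enough. Given $Y,Y'$, I would bound $|\Psi(Y)_t-\Psi(Y')_t|$ by the supremum over admissible $(\tau,\sigma)$ of $|\mathcal{E}^{f\circ Y}_{t,\tau\wedge\sigma}[P(Y)]-\mathcal{E}^{f\circ Y'}_{t,\tau\wedge\sigma}[P(Y')]|$, where $P(Y)$ denotes the $Y$-dependent terminal payoff, using that the difference of two sup-inf expressions is dominated by the supremum of the pointwise differences. The standard a priori estimate for the conditional $f$-expectation then splits this into a driver contribution and a terminal contribution. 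The driver contribution is controlled by $C_f$ and by the $\mathcal{W}_p$-Lipschitz continuity of $f$ in its law argument; being an integral-in-time term, it carries a factor of order $1/\beta$ in the weighted $\mathbb{L}_\beta^p$ norm and can be made arbitrarily small by enlarging $\beta$. The terminal contribution is the delicate one: since the events $\{\tau\le\sigma<T\}$, $\{\sigma<\tau\}$, $\{\tau\wedge\sigma=T\}$ are disjoint and $\xi$ does not depend on $Y$, only $h_1$ (with constants $\gamma_1,\gamma_2$) and $h_2$ (with constants $\kappa_1,\kappa_2$) enter, and the Lipschitz bounds together with $\mathcal{W}_p(\P_{Y_\rho},\P_{Y'_\rho})^p\le\E[|Y_\rho-Y'_\rho|^p]$ (via coupling) give, after repeated use of $|a+b|^p\le 2^{p-1}(|a|^p+|b|^p)$, a bound of the form $2^{\frac{5p}{2}-3}(\gamma_1^p+\gamma_2^p+\kappa_1^p+\kappa_2^p)\,\|Y-Y'\|_{\mathbb{L}_\beta^p}^p$. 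This term is not dampened by $\beta$, which is exactly why the genuine smallness condition \eqref{ExistenceCond} is needed: it guarantees $2^{\frac{5p}{2}-3}(\gamma_1^p+\gamma_2^p+\kappa_1^p+\kappa_2^p)<1$, so that after fixing $\beta$ large enough to absorb the driver term, $\Psi$ becomes a strict contraction.

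I would conclude by the Banach fixed-point theorem: $\Psi$ has a unique fixed point $V\in\mathbb{L}_\beta^p$, which by the identification above lies in $\mathcal{S}^p$ and satisfies $\underline{V}=\overline{V}=V$, proving (i). Feeding $V$ into the frozen doubly reflected BSDE produces a solution $(Y,Z,U,K^1,K^2)$ of \eqref{BSDE1} with $Y=V$ and $dK^1\perp dK^2$; uniqueness follows because any solution has $Y$ equal to the unique fixed point of $\Psi$, after which $(Z,U,K^1,K^2)$ are pinned down by the uniqueness for standard doubly reflected BSDEs with jumps, giving (ii) and (iii). The main obstacle throughout is the contraction estimate of the previous paragraph, namely establishing a stability estimate for the value of the generalized Dynkin game that cleanly separates the $\beta$-dampable driver part from the obstacle part and tracks the constants precisely enough to reach the sharp threshold $2^{3-\frac{5p}{2}}$.
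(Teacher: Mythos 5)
Your proposal is correct and follows essentially the same route as the paper: freeze the law and obstacle dependence, invoke the generalized Dynkin game / doubly reflected BSDE correspondence of \cite{dqs16} for the frozen data (so that $\overline{\Psi}=\underline{\Psi}$), and run a Banach fixed-point argument in $\mathbb{L}_\beta^p$ whose obstacle contribution carries exactly the constant $2^{\frac{5p}{2}-3}(\gamma_1^p+\gamma_2^p+\kappa_1^p+\kappa_2^p)$, yielding the threshold \eqref{ExistenceCond}. The one structural difference is how the driver term is absorbed: you make it small globally on $[0,T]$ by pushing $\eta\to 0$ (hence $\beta\to\infty$) in the a priori estimate of Proposition \ref{estimates}, whereas the paper keeps $\eta,\beta$ fixed and instead contracts on a short window $[T-\delta,T]$, then pastes solutions backward over subintervals; your variant is legitimate (the constraint $\beta\ge 2C_f+3/\eta$ permits arbitrarily small $\eta$, and the obstacle constant is unaffected) and spares you the pasting step, but otherwise the two arguments coincide.
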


\begin{Remark}
In the Brownian motion case, existence and uniqueness of the solution to \eqref{BSDE1} is derived in  \cite{CHM20}, in the particular case when the mean-field coupling is in terms of $\E[Y_t]$ and the driver $f$ does not depend on $z$ and $u$, under the so-called 'strict separation of obstacles' condition. The result in \cite{CHM20} is obtained  under a smallness condition different from \eqref{ExistenceCond}. 
\end{Remark}

\begin{proof} 
  \noindent \uline{\textit{Step 1}} (\textit{Well-posedness and contraction property of the operators $\overline{\Psi}$ and $\underline{\Psi}$}).  We derive these properties only for the operator $\overline{\Psi}$ since a similar proof holds for the operator $\underline{\Psi}$.
  
  \medskip 
  
  We first show that $\overline{\Psi}$  is a well-defined map from $\mathbb{L}^{p}_{\beta}$ to itself. Indeed, let $\bar{Y} \in \mathbb{L}^{p}_{\beta}$. Since $h_1$ and $h_2$ satisfy Assumption \ref{generalAssump} (ii), it follows that $h_1(t,\bar{Y}_t,\mathbb{P}_{\bar{Y}_t}) \in \mathcal{S}^p$ and $h_2(t,\bar{Y}_t,\mathbb{P}_{\bar{Y}_t}) \in \mathcal{S}^p$. Therefore, there exists a unique solution $(\hat{Y}, \hat{Z}, \hat{U}, \hat{K}^1, \hat{K}^2) \in \mathcal{S}^p \times \mathcal{H}^{p,1} \times \mathcal{H}^p_\nu \times (\mathcal{S}_{i}^p)^2$ of the reflected BSDE associated with the obstacle processes $h_1(t,\bar{Y}_t,\mathbb{P}_{\bar{Y}_t})$ and $h_2(t,\bar{Y}_t,\mathbb{P}_{\bar{Y}_t})$, the terminal condition $\xi$ and the driver $(f \circ {\bar{Y}})(t,\omega,y,z,u)$. Since $f$ satisfies Assumption \ref{generalAssump} {\it (i.c)}, by classical results on the link between the $Y$-component of the solution of a doubly reflected BSDE and optimal stopping games with nonlinear expectations (see e.g. \cite{dqs16}), we obtain $\overline{\Psi}(\bar{Y})=\hat{Y} \in \mathcal{S}^p \subset \mathbb{L}^{p}_{\beta}$.
  
  Let us now show that 
 $\overline{\Psi}: \mathbb{L}_{\beta}^p \longrightarrow \mathbb{L}_{\beta}^p$ is a contraction on the time interval $[T-\delta,T]$, for some small $\delta >0$ to be chosen appropriately.
\medskip
First, note that by the Lipschitz continuity of $f$ and $h$, for $Y,\bar{Y} \in \mathcal{S}^{p}_\beta$, $Z,\bar{Z} \in \mathcal{H}^{p,1}$, and $U,\bar{U} \in \mathcal{H}_\nu^{p}$,
 \begin{equation}
\begin{array}{lll}\label{f-h-lip-3}
|f(s,Y_s,Z_s,U_s, \P_{Y_s})-f(s,\bar{Y}_s,\bar{Z}_s,\bar{U}_s,\P_{\bar{Y}_s})|
\le  C_f(|Y_s-\bar{Y}_s|+|Z_s-\bar{Z}_s|+|U_s-\bar{U}_s|_{\nu}+\mathcal{W}_p(\P_{Y_s},\P_{\bar{Y}_s})), \\ \\
|h_1(s,Y_s,\P_{Y_s})-h_1(s,\bar{Y}_s,\P_{\bar{Y}_s})|\le \gamma_1|Y_s-\bar{Y}_s|+\gamma_2 \mathcal{W}_p(\P_{Y_s},\P_{\bar{Y}_s}), \\ \\
|h_2(s,Y_s,\P_{Y_s})-h_2(s,\bar{Y}_s,\P_{\bar{Y}_s})|\le \kappa_1|Y_s-\bar{Y}_s|+\kappa_2 \mathcal{W}_p(\P_{Y_s},\P_{\bar{Y}_s}).
\end{array}
\end{equation}
For the $p$-Wasserstein distance, we have the following inequality: for $0\leq s \leq u \leq t \leq T$,
\be \label{Wass-property}
\sup_{u\in [s,t]} \mathcal{W}_p(\P_{Y_u},\P_{\bar{Y}_u}) \leq \sup_{u\in [s,t]}(\mathbb{E}[|Y_u-\bar{Y}_u|^p])^{1/p},
\ee
from which we derive the following useful inequality
\be
\sup_{u\in [s,t]} \mathcal{W}_p(\P_{Y_u},\delta_0) \leq \sup_{u\in [s,t]}(\mathbb{E}[|Y_u|^p])^{1/p}.
\ee
Fix $Y,\bar{Y} \in \mathbb{L}_\beta^p$. For any $T-\delta \leq t \leq T$, by the estimates (A.1) on BSDEs (see the appendix, below), we have  
\begin{equation*}\begin{array} {lll}
|\overline{\Psi}(Y)_t-\overline{\Psi}(\bar{Y})_t|^{p} \\ \quad
=|\underset{\sigma\in\mathcal{T}_t}{\essinf\,} \underset{\tau\in\mathcal{T}_t}{\esssup\,}\mathcal{E}_{t,\tau \wedge \sigma}^{f \circ Y }[h_1(\tau,Y_{\tau},\P_{Y_s|s=\tau})\ind_{\{\tau \leq  \sigma<T\}}+h_2(\sigma,Y_{\sigma},\P_{Y_s|s=\sigma})\ind_{\{\sigma<\tau\}}+\xi \ind_{\{\tau \wedge \sigma=T\}}] \\ \quad
 -\underset{\sigma\in\mathcal{T}_t}{\essinf\,} \underset{\tau\in\mathcal{T}_t}{\esssup\,}\mathcal{E}_{t,\tau \wedge \sigma}^{f \circ \bar{Y} }[h_1(\tau,\bar{Y}_{\tau},\P_{\bar{Y}_s|s=\tau})\ind_{\{\tau \leq  \sigma<T\}}+h_2(\sigma,\bar{Y}_{\sigma},\P_{\bar{Y}_s|s=\sigma})\ind_{\{\sigma<\tau\}}+\xi \ind_{\{\tau \wedge \sigma=T\}}]|^{p}
\\ \quad \le \underset{\tau\in\mathcal{T}_t}{\esssup\,} \underset{\sigma\in\mathcal{T}_t}{\esssup\,}\left|\mathcal{E}_{t,\tau \wedge \sigma}^{f \circ Y}[h_1(\tau,Y_{\tau},\P_{Y_s|s=\tau})\ind_{\{\tau \leq  \sigma<T\}}+h_2(\sigma,Y_{\sigma},\P_{Y_s|s=\sigma})\ind_{\{\sigma<\tau\}}+\xi \ind_{\{\tau \wedge \sigma=T\}}] \right. \\ \left. \qquad - \mathcal{E}_{t,\tau \wedge \sigma}^{f \circ \bar{Y} }[h_1(\tau,\bar{Y}_{\tau},\P_{\bar{Y}_s|s=\tau})\ind_{\{\tau \leq  \sigma<T\}}+h_2(\sigma,\bar{Y}_{\sigma},\P_{\bar{Y}_s|s=\sigma})\ind_{\{\sigma<\tau\}}+\xi \ind_{\{\tau \wedge \sigma=T\}}] \right|^{p}\\ \quad
\le \underset{\tau\in\mathcal{T}_t}{\esssup\,} \underset{\sigma\in\mathcal{T}_t}{\esssup\,} \eta^p 2^{\frac{p}{2}-1}\E\left[\left(\int_t^{\tau \wedge \sigma} e^{2 \beta  (s-t)} |(f \circ Y) (s,\widehat{Y}^{\tau, \sigma}_s,\widehat{Z}^{\tau, \sigma}_s,\widehat{U}^{\tau, \sigma}_s) \right.\right. \\ \left.\left. \qquad\qquad\qquad -(f \circ \bar{Y}) (s,\widehat{Y}^{\tau, \sigma}_s, \widehat{Z}^{\tau, \sigma}_s,\widehat{U}^{\tau, \sigma}_s)|^2 ds\right)^{p/2} \right. \\ \left. \qquad\qquad\qquad\qquad +2^{p/2-1} e^{p \beta(\tau \wedge \sigma-t)}|\left(h_1(\tau,Y_\tau,\P_{Y_s|s=\tau})-h_1(\tau,\bar{Y}_\tau, \P_{\bar{Y}_s|s=\tau})\right)\ind_{\{\tau \leq  \sigma<T\}} \right. \\ \left. \qquad\qquad\qquad\qquad\qquad +\left(h_2(\sigma,Y_\sigma,\P_{Y_s|s=\sigma})-h_2(\sigma,\bar{Y}_\sigma, \P_{\bar{Y}_s|s=\sigma})\right)\ind_{\{\sigma < \tau\}}|^p
|\mathcal{F}_t\right] \\\quad  = \underset{\tau\in\mathcal{T}_t}{\esssup\,} \underset{\sigma\in\mathcal{T}_t}{\esssup\,} \eta^p 2^{\frac{p}{2}-1} \E\left[\left(\int_t^{\tau \wedge \sigma} e^{2 \beta (s-t)}|f(s,\widehat{Y}^{\tau, \sigma}_s,\widehat{Z}^{\tau, \sigma}_s,\widehat{U}^{\tau, \sigma}_s,
\P_{Y_s}) \right.\right. \\ \left.\left. \qquad\qquad\qquad\qquad\qquad\qquad\qquad -f(s,\widehat{Y}^{\tau, \sigma}_s,\widehat{Z}^{\tau, \sigma}_s,\widehat{U}^{\tau, \sigma}_s, \P_{\bar{Y}_s})|^2 ds \right)^{p/2}  \right. \\  \left. \qquad\qquad\qquad\qquad\qquad\qquad \qquad+e^{p \beta(\tau \wedge \sigma-t)}(|h_1(\tau,Y_\tau,\P_{Y_s|s=\tau})-h_1(\tau,\bar{Y}_\tau, \P_{\bar{Y}_s|s=\tau})|\right. \\ \left. \qquad\qquad \qquad\qquad\qquad\qquad\qquad\qquad\qquad\qquad+|h_2(\tau,Y_\tau,\P_{Y_s|s=\tau})-h_2(\sigma,\bar{Y}_\sigma, \P_{\bar{Y}_s|s=\sigma})|)^p
|\mathcal{F}_t\right],
 \end{array}
\end{equation*}
with $\eta$, $\beta>0$ such that  $\eta \leq \frac{1}{C_f^2}$ and  $\beta \geq 2 C_f+\frac{3}{\eta}$, where $(\widehat{Y}^{\tau, \sigma},\widehat{Z}^{\tau, \sigma},\widehat{U}^{\tau, \sigma})$ is the solution of the BSDE associated with driver $f\circ \bar{Y}$, terminal time $\tau \wedge \sigma$, terminal condition $\xi$ and terminal condition $h_1(\tau,{\bar{Y}}_\tau,\P_{\bar{Y}_s|s=\tau})\ind_{\{\tau \leq \sigma<T\}}+h_2(\sigma,{\bar{Y}}_\sigma,\P_{\bar{Y}_s|s=\sigma})\ind_{\{\sigma<\tau\}}+\xi\ind_{\{\tau \wedge \sigma=T\}}$.
Therefore, using \eqref{f-h-lip-3} and the fact that, for $\rho=\tau,\sigma$, 
\begin{equation} \label{Wass-property-1}\mathcal{W}_p^{p} (\P_{Y_s|s=\rho},\P_{\bar{Y}_s|s=\rho})\le \E[|Y_s-\bar{Y}_s|^p]_{|s=\rho}\le \underset{\rho\in \mathcal{T}_t}{\sup}\E[|Y_{\rho}-\bar{Y}_{\rho}|^p],
\end{equation}
we have, for any $t\in [T-\delta,T]$,
\begin{equation*}\label{ineq-evsn-3}
\begin{array} {lll}
e^{p \beta t}|\overline{\Psi}(Y)_t-\overline{\Psi}(\bar{Y})_t|^p \le \underset{\tau\in\mathcal{T}_t}{\esssup\,} \underset{\sigma \in\mathcal{T}_t}{\esssup\,} \E\left[\int_t^{\tau \wedge \sigma} e^{p \beta (s-t)}\delta^{\frac{p-2}{2}}2^{\frac{p}{2}-1} \eta^pC_f^p\E[|Y_s-\bar{Y}_s|^p]ds \right. \\ \left. +2^{\frac{p}{2}-1} e^{p \beta(\tau \wedge  \sigma-t)}\left\{\gamma_1|Y_\tau-\bar{Y}_\tau|+\gamma_2(\E[|Y_s-\bar{Y}_s|^p]_{|s=\tau}+\kappa_1|Y_\sigma-\bar{Y}_\sigma|+\kappa_2(\E[|Y_s-\bar{Y}_s|^p]_{|s=\sigma}\right\}^{1/p})^p
|\mathcal{F}_t\right]
\\ \qquad
 \le \underset{\tau\in\mathcal{T}_t}{\esssup\,} \underset{\sigma \in\mathcal{T}_t}{\esssup\,} \E\left[\int_t^{\tau \wedge \sigma} e^{p \beta s}2^{\frac{p}{2}-1} \delta^{\frac{p-2}{2}} \eta^pC_f^p\E[|Y_s-\bar{Y}_s|^p]ds \right. \\ \left. \qquad\qquad\qquad\qquad +2^{\frac{p}{2}-1} e^{p \beta\tau \wedge \sigma}\left\{4^{p-1}\gamma_1^p|Y_\tau-\bar{Y}_\tau|^p+4^{p-1}\gamma_2^p\E[|Y_s-\bar{Y}_s|^p]_{|s=\tau} \right. \right. \\ \left. \left. \qquad\qquad\qquad\qquad\qquad +4^{p-1}\kappa_1^p|Y_\sigma-\bar{Y}_\sigma|^p+4^{p-1}\kappa_2^p\E[|Y_s-\bar{Y}_s|^p]_{|s=\sigma}\right\} |\mathcal{F}_t\right].
\end{array}
\end{equation*}
Therefore,
$$
e^{p \beta t}|\overline{\Psi}(Y)_t-\overline{\Psi}(\bar{Y})_t|^p \le \underset{\tau\in\mathcal{T}_t}{\esssup\,} \E[G^1(\tau)|\mathcal{F}_t]+\underset{\sigma\in\mathcal{T}_t}{\esssup\,} \E[G^2(\sigma)|\mathcal{F}_t]:=V^1_t+V^2_t,
$$
where
\begin{equation}\begin{array}{lll}
G^1(\tau):=\int_{T-\delta}^{\tau}e^{p \beta s}2^{\frac{p}{2}-1} \eta^p \delta^{\frac{p-2}{2}} C_f^p\E[|Y_s-\bar{Y}_s|^p]ds  \\ \qquad\qquad\qquad  +2^{\frac{p}{2}-1}e^{p \beta \tau}(4^{p-1}\gamma_1^p|Y_{\tau}-\bar{Y}_{\tau}|^p+4^{p-1}\gamma_2^p\E[|Y_{s}-\bar{Y}_{s}|^p]_{|s=\tau}), \\
G^2(\sigma) :=2^{\frac{p}{2}-1} e^{p \beta \sigma}(4^{p-1}\kappa_1^p|Y_{\sigma}-\bar{Y}_{\sigma}|^p+4^{p-1}\kappa_2^p\E[|Y_{s}-\bar{Y}_{s}|^p]_{|s=\sigma}).
\end{array}
\end{equation}
which yields
\begin{equation}\label{est-1}
\underset{\tau\in\mathcal{T}_{T-\delta}}{\sup}\E[e^{p \beta \tau}|\overline{\Psi}(Y)_{\tau}-\overline{\Psi}(\bar{Y})_{\tau}|^p] \le \underset{\tau\in\mathcal{T}_{T-\delta}}{\sup}\E[V^1_{\tau}+V^2_{\tau}].
\end{equation}
We have
\begin{equation}\label{est-2}
\underset{\tau\in\mathcal{T}_{T-\delta}}{\sup}\E[V_{\tau}]\le \alpha \underset{\tau\in\mathcal{T}_{T-\delta}}{\sup}\E[e^{p \beta \tau}|Y_{\tau}-\bar{Y}_{\tau}|^p], 
\end{equation}
where $\alpha:=2^{\frac{p}{2}-1}\delta^{1+\frac{p-2}{2}} \eta^p C_f^p+2^{\frac{p}{2}-1}4^{p-1}(\gamma_1^p+\gamma_2^p+\kappa_1^p+\kappa_2^p)$ and $\textcolor{black}{V_t:=V_t^{1}+V_t^2}$.

\noindent \textcolor{black}{Let $\sigma \in \mathcal{T}_0$}. Indeed, by Lemma D.1 in \cite{KS98},  there exist sequences $(\tau^1_n)_n$ and $(\tau^2_n)_n$ of stopping times in $\mathcal{T}_{\sigma}$ such that
$$
V^1_{\sigma}=\underset{n\to\infty}{\lim}\E[G^1(\tau^1_n)|\mathcal{F}_{\sigma}]
$$
and 
$$
V^2_{\sigma}=\underset{n\to\infty}{\lim}\E[G^2(\tau^2_n)|\mathcal{F}_{\sigma}].
$$
Therefore, by Fatou's Lemma, we have
$$
\E[V^1_{\sigma}]+\E[V^2_{\sigma}]\le \underset{n\to\infty}{\underline{\lim}}\E[G^1(\tau^1_n)]+\underset{n\to\infty}{\underline{\lim}}\E[G^2(\tau^2_n)]\le \underset{\tau\in\mathcal{T}_{T-\delta}}{\sup}\E[G^1(\tau)]+\underset{\tau\in\mathcal{T}_{T-\delta}}{\sup}\E[G^2(\tau)].
$$
%Hence,
%$$
%\underset{\tau\in\mathcal{T}_{T-\delta}}{\sup}\E[e^{p \beta t}|\overline{\Psi}(Y)_\tau-\overline{\Psi}(\bar{Y})_\tau|^p] \le \underset{\tau\in\mathcal{T}_{T-\delta}}{\sup}\E[G^1(\tau)]+\underset{\tau\in\mathcal{T}_{T-\delta}}{\sup}\E[G^2(\tau)].
%$$
Using \eqref{Wass-property-1} and noting that $e^{p\beta\tau}\E[|Y_s-\bar{Y}_s|^p]_{|s=\tau}=\E[e^{p\beta s}|Y_s-\bar{Y}_s|^p]_{|s=\tau}$, we obtain
$$
\underset{\tau\in\mathcal{T}_{T-\delta}}{\sup}\E[G^1(\tau)]+\underset{\tau\in\mathcal{T}_{T-\delta}}{\sup}\E[G^2(\tau)] \le \alpha \underset{\tau\in\mathcal{T}_{T-\delta}}{\sup}\E[e^{p \beta \tau}|Y_{\tau}-\bar{Y}_{\tau}|^p]
$$
which in turn yields \eqref{est-2}.

Assuming $(\gamma_1,\gamma_2, \kappa_1, \kappa_2)$ satisfies
\begin{equation*}
\gamma_1^p+\gamma_2^p+\kappa_1^p+\kappa_2^p<4^{1-p}2^{1-\frac{p}{2}}
%\max(\gamma_1,\gamma_2)<2^{\frac{1}{p}-1}
\end{equation*}
we can choose  
$$
0<\delta<\left(\frac{1}{2^{\frac{p}{2}-1} \eta^p C_f^p}
\left(1-4^{p-1}2^{\frac{p}{2}-1}(\gamma^p_1+\gamma^p_2+\kappa^p_1+\kappa^p_2)\right)\right)^{\frac{p}{2p-2}}
$$ 
 to make $\overline{\Psi}$ a contraction on $\mathbb{L}^p_\beta$ over the time interval $[T-\delta,T]$, i.e. $\overline{\Psi}$ admits a unique fixed point over $[T-\delta,T]$.\\

\medskip
\textit{\underline{Step 2} (Existence of the value of the game and link with the mean-field doubly reflected BSDE \eqref{BSDE1}).} Let $\overline{V} \in \mathbb{L}_\beta^p$ be the fixed point for $\overline{\Psi}$ obtained in \textit{Step 1} and $(\hat{Y},\hat{Z},\hat{U},\hat{K}^1, \hat{K}^2) \in \mathcal{S}^p \times \mathcal{H}^{p,1} \times \mathcal{H}^p_\nu \times (\mathcal{S}_{i}^p)^2$ be the unique solution of the standard doubly reflected BSDE, with barriers $h_1(s,\overline{V}_s,\P_{V_s})$ and $h_2(s,\overline{V}_s,\P_{\overline{V}_s})$ and driver $g(s,y,z,u):=f(s,y,z,u,\P_{\overline{V}_s})$, i.e.
$$ \begin{array}{lll}
\hat{Y}_t = \xi +\int_t^T f(s,\hat{Y}_{s},  \hat{Z}_s,\hat{U}_s, \P_{V_s})ds + (\hat{K}^1_T - \hat{K}^1_t)-(\hat{K}^2_T -\hat{K}^2_t) \\ \qquad\qquad\qquad\qquad -\int_t^T \hat{Z}_s dB_s  - \int_t^T \int_{\R^\star} \hat{U}_s(e) \tilde N(ds,de), \qquad\quad T-\delta \leq t \leq T.
\end{array}
$$
Then, by Theorem 4.1. in \cite{dqs16}, we have
$$
\hat{Y}_t= \overline{\Psi}(\overline{V})_t=\underline{\Psi}(\overline{V})_t,
$$
\textcolor{black}{which combined with \textit{Step 1}, gives $\hat{Y}_\cdot=\overline{V}_{\cdot}$ and $\overline{V}_{\cdot}=\underline{\Psi}(\overline{V})_{\cdot}$} Thus, $\overline{V}_{\cdot}=\underline{V}_{\cdot}=\hat{Y}_{\cdot}$. This relation also yields existence of a solution for \eqref{BSDE1} on $[T-\delta, T]$. Hence, by the uniqueness of the solution of a doubly reflected BSDE, we obtain uniqueness of the associated processes $(\hat{Z},\hat{U},\hat{K}^1, \hat{K}^2)$ and combined with the fixed point property of $V$, we derive existence and uniqueness of the solution of \eqref{BSDE1} on $[T-\delta,T]$.

\noindent Applying the same method as in \textit{Step 1} on each time interval $[T-(j+1)\delta,T-j\delta]$, $1 \leq j \leq m$, with the same operator $\overline{\Psi}$, but with terminal condition $Y_{T-j\delta}$ at time $T-j\delta$, we build recursively, for $j=1$ to $m$, a solution $(Y^j,Z^j,U^j,K^{1,j},K^{2,j})$. Pasting properly these processes, we obtain an unique solution $(Y,Z,U,K^{1},K^{2})$ of \eqref{BSDE1} on $[0,T]$.

\noindent By using again the relation between classical Dynkin games and doubly reflected BSDEs, we also get the existence of a \textit{value of the game} $V$, which satisfies $V_\cdot=Y_\cdot$, and therefore also belongs to  $\mathcal{S}^p$.
\qed
\end{proof}

We now introduce the definition of $S$-saddle points in our setting and provide sufficient conditions on the barriers which ensure the existence of saddle points.

\paragraph{Existence of a $S$-saddle point.} Assume that the mean-field Dynkin game admits a \textit{common value} $(V_t)$. The associated payoff is denoted by
$$P(\tau,\sigma):= h_1(\tau,V_\tau, \mathbb{P}_{V_\tau})\ind_{\{\tau \leq \sigma<T\}}+h_2(\tau,V_\tau, \mathbb{P}_{V_\tau})\ind_{\{\sigma < \tau\}}+\xi \ind_{\{\tau \wedge \sigma=T\}}.$$

We now give the definition of a $S$-saddle point for this game problem.

\begin{Definition}
Let $S \in \mathcal{T}_0$. A pair $(\tau^\star, \sigma^\star) \in (\mathcal{T}_S)^2$ is called an $S$-saddle point for the \textit{mean-field Dynkin game problem} if for each $(\tau, \sigma) \in (\mathcal{T}_S)^2$ we have
\begin{align}
    \mathcal{E}^{f \circ V}_{S, \tau \wedge \sigma^{\star}}(P(\tau,\sigma^\star)) \leq \mathcal{E}^{f \circ V}_{S, \tau^\star \wedge \sigma^{\star}}(P(\tau^\star,\sigma^\star)) \leq \mathcal{E}^{f \circ V}_{S, \tau^\star \wedge \sigma}(P(\tau^\star,\sigma)).
\end{align}
\end{Definition}

We now provide sufficient conditions which ensure the existence of an $S$-saddle point.

\begin{Theorem}[Existence of $S$-saddle points]\label{saddle}
Suppose that $\gamma_1$, $\gamma_2$, $\kappa_1$ and $\kappa_2$ satisfy the condition \eqref{ExistenceCond}. Assume that $h_1$ (resp. $h_2$) take the form $h_1(t,\omega,y,\mu):=\xi_t(\omega)+\kappa^1(y,\mu)$ (resp. $h_2(t,\omega,y,\mu):=\zeta_t(\omega)+\kappa^2(y,\mu)$), where $\xi$ and $-\zeta$  belong to $\mathcal{S}^p$ and are  left upper semicontinuous process along stopping times, and $\kappa^1$ (resp $\kappa^2$) are bounded and Lipschitz functions with respect to $(y,\mu)$. 
\noindent For each $S \in \mathcal{T}_0$, consider the pair of stopping times $(\tau_{S}^{\star}, \sigma_{S}^\star)$ defined by
\begin{align}\label{S}
    \tau_{S}^\star:=\inf \{t \geq S:\,\, V_t=h_1(t,V_t,\mathbb{P}_{V_t})\}\,\,\, \text{ and } \,\,\,\sigma_{S}^\star:=\inf \{t \geq S:\,\, V_t=h_2(t,V_t,\mathbb{P}_{V_t})\}.
\end{align}
Then the pair of stopping time $(\tau_{S}^\star, \sigma_{S}^\star)$ given by $\eqref{S}$ is an $S$-saddle point.
\end{Theorem}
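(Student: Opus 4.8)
The plan is to \emph{freeze the value} and thereby reduce the mean-field game to a standard generalized ($\mathcal{E}$-expectation) Dynkin game, for which the saddle-point characterization via hitting times is available in \cite{dqs16}. Since Theorem \ref{Dynkin} already provides a common value $V \in \mathcal{S}^p$, I introduce the frozen driver $\bar f(s,y,z,u) := (f \circ V)(s,y,z,u) = f(s,y,z,u,\P_{V_s})$ together with the frozen barriers $\bar h_1(s) := h_1(s,V_s,\P_{V_s}) = \xi_s + \kappa^1(V_s,\P_{V_s})$ and $\bar h_2(s) := h_2(s,V_s,\P_{V_s}) = \zeta_s + \kappa^2(V_s,\P_{V_s})$. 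By Theorem \ref{Dynkin}(ii)--(iii) the quadruple $(V,Z,U,K^1,K^2)$ is the unique solution of the doubly reflected BSDE \eqref{BSDE1} with data $(\bar f,\xi,\bar h_1,\bar h_2)$, so that $V$ is simultaneously the value of the generalized Dynkin game with payoff $P$ and the $Y$-component of a classical (non mean-field) doubly reflected BSDE. The whole argument then takes place for this frozen problem.

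I would organise the core around three ingredients. First, the value process meets the barriers at the candidate times: $V_{\tau^\star_S} = \bar h_1(\tau^\star_S)$ on $\{\tau^\star_S < T\}$ and $V_{\sigma^\star_S} = \bar h_2(\sigma^\star_S)$ on $\{\sigma^\star_S < T\}$; since $V,\bar h_1,\bar h_2$ are c\`adl\`ag and $\bar h_1 \le V \le \bar h_2$ by \eqref{BSDE1}(iii), these identities follow from right-continuity at the first hitting times \eqref{S}. Second, the flatness of the reflection: $K^1$ does not charge $[S,\tau^\star_S]$ (because $V > \bar h_1$ on $[S,\tau^\star_S)$) and $K^2$ does not charge $[S,\sigma^\star_S]$ (because $V < \bar h_2$ on $[S,\sigma^\star_S)$), via the Skorokhod conditions \eqref{BSDE1}(iv). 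Third, the comparison and monotonicity of $\mathcal{E}^{f \circ V}$: on $[S,\tau^\star_S\wedge\sigma]$, where $K^1$ is flat, the nonincreasing contribution $-K^2$ makes $V$ an $\mathcal{E}^{f \circ V}$-submartingale; on $[S,\tau\wedge\sigma^\star_S]$, where $K^2$ is flat, the contribution $+K^1$ makes it an $\mathcal{E}^{f \circ V}$-supermartingale; and on $[S,\tau^\star_S\wedge\sigma^\star_S]$ both reflections are flat, so $V$ is an $\mathcal{E}^{f \circ V}$-martingale.

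Combining these yields the three required inequalities. For the right-hand one, fixing $\tau^\star_S$ and an arbitrary $\sigma$, the submartingale property gives $V_S \le \mathcal{E}^{f \circ V}_{S,\tau^\star_S\wedge\sigma}(V_{\tau^\star_S\wedge\sigma})$; since $V_{\tau^\star_S\wedge\sigma}\le P(\tau^\star_S,\sigma)$ (equality with $\bar h_1(\tau^\star_S)$ on $\{\tau^\star_S\le\sigma\}$, and $V_\sigma\le\bar h_2(\sigma)$ on $\{\sigma<\tau^\star_S\}$), monotonicity of $\mathcal{E}^{f \circ V}$ gives $V_S\le \mathcal{E}^{f \circ V}_{S,\tau^\star_S\wedge\sigma}(P(\tau^\star_S,\sigma))$. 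The left-hand inequality is symmetric, using the supermartingale property together with $V_{\tau\wedge\sigma^\star_S}\ge P(\tau,\sigma^\star_S)$, and the middle equality follows from the martingale property and $V_{\tau^\star_S\wedge\sigma^\star_S}=P(\tau^\star_S,\sigma^\star_S)$. In practice these steps are exactly the saddle-point statement of \cite{dqs16} applied to the frozen data $(\bar f,\xi,\bar h_1,\bar h_2)$, which I would invoke once its hypotheses are checked.

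The hard part, and the only place where the specific structure $h_1 = \xi_t + \kappa^1(y,\mu)$, $h_2 = \zeta_t + \kappa^2(y,\mu)$ is genuinely used, is verifying the regularity of the frozen barriers needed for the flatness step, namely that $\bar h_1$ is left upper semicontinuous and $\bar h_2$ left lower semicontinuous along stopping times. The difficulty is that $\kappa^i(V_\cdot,\P_{V_\cdot})$ is only c\`adl\`ag, so its left limits involve $V_{\cdot-}$ rather than $V_\cdot$, and left semicontinuity is not preserved under composition with the c\`adl\`ag process $V$ in general. I would resolve this by noting that, in the present Brownian--Poisson filtration, the jumps of $V$ occur only at totally inaccessible times (they are inherited from $\tilde N$, the reflecting processes $K^1,K^2$ having no predictable jumps for left-regular barriers), so $V$ has no jump at any predictable time; hence along any announcing sequence $\tau_n \uparrow \tau$ one has $V_{\tau_n}\to V_{\tau-}=V_\tau$ a.s., and the Lipschitz continuity of $\kappa^1$ (resp. $\kappa^2$) then transfers the left upper (resp. lower) semicontinuity from $\xi$ (resp. $\zeta$) to $\bar h_1$ (resp. $\bar h_2$). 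Once this regularity is in hand, the flatness of $K^1$ on $[S,\tau^\star_S]$ and of $K^2$ on $[S,\sigma^\star_S]$ follows from \eqref{BSDE1}(iv), and the three inequalities complete the proof. The smallness condition \eqref{ExistenceCond} enters only through Theorem \ref{Dynkin}, to guarantee that the value $V$, and hence all the frozen data, is well defined.
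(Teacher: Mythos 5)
Your overall strategy coincides with the paper's: freeze the law and the value so as to reduce the mean-field game to a classical generalized Dynkin game with data $(f\circ V,\xi,\bar h_1,\bar h_2)$, and then invoke the saddle-point result of \cite{dqs16} (Theorem 3.7(ii)) once the left regularity of the frozen barriers along stopping times is established. You also correctly isolate the only genuinely delicate point, namely that $V$ must have no jump at predictable stopping times so that $\bar h_1=\xi_\cdot+\kappa^1(V_\cdot,\P_{V_\cdot})$ inherits left upper semicontinuity along stopping times from $\xi$ (and symmetrically $-\bar h_2$ from $-\zeta$).

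However, the argument you give for that key point is circular. You assert that the jumps of $V$ are inherited from $\tilde N$ because ``the reflecting processes $K^1,K^2$ have no predictable jumps for left-regular barriers.'' But the barriers of the frozen problem are $\xi_t+\kappa^1(V_t,\P_{V_t})$ and $\zeta_t+\kappa^2(V_t,\P_{V_t})$: their left regularity is exactly the property you are trying to prove, and it fails precisely when $V$ has a predictable jump, which in turn can only come from a predictable jump of $K^1-K^2$ (the stochastic integrals against $B$ and $\tilde N$ jump only at totally inaccessible times). So the chain ``$K^{1},K^{2}$ free of predictable jumps $\Rightarrow$ barriers left regular $\Rightarrow$ $K^{1},K^{2}$ free of predictable jumps'' has no entry point. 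The paper breaks the circle with the Picard iteration $V^{(0)}\equiv 0$, $V^{(m)}:=\overline{\Psi}(V^{(m-1)})$: the $m$-th problem is a standard doubly reflected BSDE whose barriers are built from $V^{(m-1)}$, hence left regular by the induction hypothesis, so each $V^{(m)}$ jumps only at totally inaccessible times; the contraction property under \eqref{ExistenceCond} gives convergence of $V^{(m)}$ to $V$ in $\mathbb{L}_\beta^{p}$, and the estimate $\E\left[|\Delta V_\tau|^p\right]\leq 2^p\sup_{\tau\in\mathcal{T}_0}\E\left[|V_\tau-V^{(m)}_\tau|^p\right]$ at any predictable time $\tau$ then transfers the absence of predictable jumps to the limit. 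If you replace your direct claim by this (or an equivalent) approximation argument, the remainder of your proof goes through as written.
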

\begin{proof}
\begin{comment}
By using the results on classical Dynkin games with payoffs $f \circ V, h_1, h_2$ (see Theorem 4.10 in \cite{dqs16}), we get that $V_t=Y_t$, with $(Y,Z,U,K^1,K^2)$ represents the solution of a doubly reflected BSDE with driver $f \circ V$ and payoffs $h_1(t,V_t,\mathbb{P}_{V_t})$ and $h_2(t,V_t,\mathbb{P}_{V_t})$. 
\end{comment}

Consider the following iterative scheme. Let $V^{(0)} \equiv 0$ (with $\P_{V^{(0)}}=\delta_0$) be the starting point and define
$$V^{(m)}:=\overline{\Psi}(V^{(m-1)}),\quad m\ge 1.$$
By applying the results on standard doubly reflected BSDEs and their relation with classical Dynkin games, we obtain that, for $1 \le i \le m$, $V^{(i)}$ coincides with the component $Y$ of the solution of the doubly reflected BSDE associated with \textcolor{black}{$f \circ V^{(i-1)}$} and obstacles $h_1(t,V^{(i-1)}_t,\mathbb{P}_{V^{(i-1)}_t})$ and $h_2(t,V^{(i-1)}_t,\mathbb{P}_{V^{(i-1)}_t})$. Due to the assumptions on $h_1$ and $h_2$, \textcolor{black}{we get that $V^{(1)}$ admits only jumps at totally inaccessible stopping times, and by induction, the same holds for $V^{(i)}$, for all $i$}.

Since the condition $\eqref{ExistenceCond}$ is satisfied, by Theorem \ref{Dynkin}, the sequence $V_t^{(m)}$ is Cauchy for the norm $\mathbb{L}_\beta^{p}$ and therefore converges in $\mathbb{L}_\beta^{p}$ to the fixed point of the map $\overline{\Psi}$.\\
Let $\tau \in \mathcal{T}_0$ be a predictable stopping time. Since $\Delta V^{(m)}_\tau=0$ a.s. for all $m$, we obtain
\begin{align}
    \mathbb{E}\left[|\Delta V_\tau|^p\right]=\mathbb{E}\left[|\Delta V_\tau-\Delta V^{(m)}_\tau|^p\right] \leq 2^p \sup_{\tau \in \mathcal{T}_0}\mathbb{E}\left[|V_\tau-V^{(m)}_\tau|^p\right],
\end{align}
which implies that $\Delta V_\tau=0$ a.s. Therefore, $h_1(t,V_t,\mathbb{P}_{V_t})$ and $-h_2(t,V_t,\mathbb{P}_{V_t})$ are left upper semicontinuous along stopping times. By Theorem 3.7 (ii) in \cite{dqs16}, $(\tau_{S}^\star, \sigma_{S}^\star)$ given by $\eqref{S}$ is a $S$-saddle point.
\end{proof}

\medskip

\medskip

\bigskip

\uline{\textit{Step 1.}} $($\textit{Well-posedness and contraction property of the operators $\overline{\Psi}$ and $\underline{\Psi})$. The well-posedness of the operators $\overline{\Psi}$ and $\underline{\Psi}$ follows by similar arguments to those used in {\it Step 1} of the proof of Theorem \eqref{Dynkin}. We now only show that $\overline{\Psi}$ is a contraction on the time interval $[T-\delta,T]$, for some well chosen $\delta$. The same proof holds for the operator $\underline{\Psi}$}.

\medskip
Fix $\mathbf{Y}^n=(Y^{1,n},\dots,Y^{n,n}),\bar{\mathbf{Y}}^n=(\bar Y^{1,n},\dots,\bar Y^{n,n}) \in \lpn_\beta$, $(\hat{Y},\tilde{Y}) \in (\mathcal{S}_\beta^{p})^2$, $(\hat{Z},\tilde{Z}) \in (\mathcal{H}^{p,1})^2$, $(\hat{U},\tilde{U}) \in (\mathcal{H}_\nu^{p})^2$. By the Lipschitz continuity of $f$, $h_1$ and $h_2$, we get
  \begin{eqnarray}\label{f-h-lip-4}
&&|f(s,\hat{Y}_s,\hat{Z}_s,\hat{U}_s, L_{n}[\textbf{Y}^n_s])-f(s,\tilde{Y}_s,\tilde{Z}_s,\tilde {U}_s,L_{n}[\bar{\textbf{Y}}^n_s])| \le  C_f(|\hat{Y}_s-\tilde{Y}_s|+|\hat{Z}_s-\tilde{Z}_s|\nonumber \\
&&\qquad \qquad\qquad \qquad \qquad\qquad +|\hat{U}_s-\tilde{U}_s|_\nu +\mathcal{W}_p(L_{n}[\textbf{Y}^n_s], L_{n}[\bar{\textbf{Y}}^n_s])), \nonumber\\ 
&&|h_1(s,Y^{i,n}_s,L_{n}[\textbf{Y}^n_s])-h_1(s,\bar{Y}^{i,n}_s,L_{n}[\bar{\textbf{Y}}^n_s])| \le  \gamma_1|Y^{i,n}_s-\bar{Y}^{i,n}_s|+\gamma_2\mathcal{W}_p(L_{n}[\textbf{Y}^n_s], L_{n}[\bar{\textbf{Y}}^n_s]), \nonumber\\
&& |h_2(s,Y^{i,n}_s,L_{n}[\textbf{Y}^n_s])-h_2(s,\bar{Y}^{i,n}_s,L_{n}[\bar{\textbf{Y}}^n_s])| \le  \kappa_1|Y^{i,n}_s-\bar{Y}^{i,n}_s|+\kappa_2\mathcal{W}_p(L_{n}[\textbf{Y}^n_s], L_{n}[\bar{\textbf{Y}}^n_s]).
\end{eqnarray}
By \eqref{ineq-wass-1}, we have
\begin{equation}\label{ineq-wass-1-1-3}
\mathcal{W}_p^p(L_{n}[\textbf{Y}^n_s], L_{n}[\bar{\textbf{Y}}^n_s])) \le \frac{1}{n}\sum_{j=1}^n |Y_s^{j,n}-\bar{Y}_s^{j,n}|^p.
\end{equation}
Then, using the estimates from Proposition A.1 (see the appendix), for any $t\leq T$ and $i=1,\ldots, n$, we have  
\begin{equation*}\begin{array} {lll}
|\overline{\Psi}^i(\Ybf^n)_t-\overline{\Psi}^i(\bar{\Ybf}^n)_t|^{p} \\
\le \underset{\tau\in\mathcal{T}^n_t}{\esssup\,} \underset{\sigma \in\mathcal{T}^n_t}{\esssup\,}\left|\mathcal{E}_{t,\tau \wedge \sigma}^{\textbf{f}^{i} \circ \Ybf}[h_1(\tau,Y^{i,n}_\tau,L_{n}[\Ybf^n_s]_{s=\tau})\ind_{\{\tau \leq \sigma \}} \right. \\ \left. \qquad\qquad \qquad\qquad \qquad\qquad +h_2(\sigma,Y^{i,n}_\sigma,L_{n}[\Ybf^n_s]_{s=\sigma})\ind_{\{\sigma<\tau\}}+\xi^{i,n}\ind_{\{\sigma \wedge \tau=T\}}] \right. \\ \left. \qquad\qquad - \mathcal{E}_{t,\tau \wedge \sigma}^{\textbf{f}^{i} \circ \bar{\Ybf}^n }[h_1(\tau,\bar{Y}^{i,n}_\tau,L_{n}[\bar{\Ybf}^n_s]_{s=\tau})1_{\{\tau \leq \sigma\}}+h_2(\sigma,\bar{Y}^{i,n}_\sigma,L_{n}[\bar{\Ybf}^n_s]_{s=\sigma})1_{\{\sigma< \tau\}}+\xi^{i,n}\ind_{\{\tau \wedge \sigma=T\}}] \right|^{p}\\
\le \underset{\tau\in\mathcal{T}^n_t}{\esssup\,}\underset{\sigma\in\mathcal{T}^n_t}{\esssup\,}\eta^p 2^{\frac{p}{2}-1}\E\left[\left(\int_t^{\tau \wedge \sigma} e^{2 \beta (s-t)}  |(\textbf{f}^{i} \circ \Ybf^n) (s,\widehat{Y}^{i,\tau, \sigma}_s,\widehat{Z}^{i,\tau, \sigma}_s,\widehat{U}^{i,\tau, \sigma}_s) \right.\right. \\ \left. \left. \qquad\qquad\qquad
-(\textbf{f}^{i} \circ \bar{\Ybf}^n) (s,\widehat{Y}^{i,\tau, \sigma}_s, \widehat{Z}^{i,\tau, \sigma}_s,\widehat{U}^{i,\tau, \sigma}_s)|^2 ds \right)^{\frac{p}{2}}  \right. \\ \left. \qquad\qquad \qquad\qquad +2^{\frac{p}{2}-1} e^{p \beta(\tau \wedge \sigma-t)}|\left(h_1(\tau,Y^{i,n}_\tau,L_{n}[\Ybf^n_s]_{s=\tau})-h_1(\tau,\bar{Y}^{i,n}_\tau, L_{n}[\bar{\Ybf}^n_s]_{s=\tau})\right)\ind_{\{\tau \leq \sigma<T\}} \right. \\ \left. \qquad\qquad\qquad\qquad\qquad +\left(h_2(\sigma,Y^{i,n}_\sigma,L_{n}[\Ybf^n_s]_{s=\sigma})-h_2(\sigma,\bar{Y}^{i,n}_\sigma, L_{n}[\bar{\Ybf}^n_s]_{s=\sigma})\right)\ind_{\{\sigma < \tau\}}|^p
|\mathcal{F}_t\right] \\
= \underset{\tau\in\mathcal{T}^n_t}{\esssup\,} \underset{\sigma \in\mathcal{T}^n_t}{\esssup\,} \eta^p 2^{\frac{p}{2}-1}\E\left[\left(\int_t^{\tau \wedge \sigma} e^{2 \beta (s-t)} |f(s,\widehat{Y}^{i,\tau, \sigma}_s,\widehat{Z}^{i,i,\tau, \sigma}_s,\widehat{U}^{i,i,\tau, \sigma}_s,
L_{n}[\Ybf^n_s])  \right.\right. \\ \left. \left. \qquad\qquad\qquad -f(s,\widehat{Y}^{i,\tau,\sigma}_s,\widehat{Z}^{i,i,\tau, \sigma}_s,\widehat{U}^{i,i,\tau, \sigma}_s, L_{n}[\bar{\Ybf}^n_s])|^2 ds \right)^{p/2}\right. \\ \left. \qquad\qquad \qquad\qquad +2^{\frac{p}{2}-1} e^{p \beta(\tau \wedge \sigma-t)}|\left(h_1(\tau,Y^{i,n}_\tau,L_{n}[\Ybf^n_s]_{s=\tau})-h_1(\tau,\bar{Y}^{i,n}_\tau, L_{n}[\bar{\Ybf}^n_s]_{s=\tau})\right)\ind_{\{\tau \leq \sigma<T\}} \right. \\ \left. \qquad\qquad\qquad \qquad\qquad +\left(h_2(\sigma,Y^{i,n}_\sigma,L_{n}[\Ybf^n_s]_{s=\sigma})-h_2(\sigma,\bar{Y}^{i,n}_\sigma, L_{n}[\bar{\Ybf}^n_s]_{s=\sigma})\right)\ind_{\{\sigma < \tau\}}|^p
|\mathcal{F}_t\right],
 \end{array}
\end{equation*}
where $(\widehat{Y}^{i,\tau, \sigma},\widehat{Z}^{i,\tau, \sigma},\widehat{U}^{i,\tau, \sigma})$ is the solution of the BSDE associated with driver $\textbf{f}^{i}\circ \bar{\Ybf}^n$, terminal time $\tau \wedge \sigma$ and terminal condition $h_1(\tau,\bar{Y}^{i,n}_\tau,L_{n}[\bar{\Ybf}^n_s]_{s=\tau})\ind_{\{\tau \leq \sigma<T\}}+h_2(\sigma,\bar{Y}^{i,n}_\sigma,L_{n}[\bar{\Ybf}^n_s]_{s=\sigma})\ind_{\{\sigma < \tau\}}+\xi^{i,n}\ind_{\{\tau \wedge \sigma=T\}}$.

\noindent Therefore, using \eqref{f-h-lip-4} and \eqref{ineq-wass-1-1-3}, we have, for any $t\le T$ and $i=1,\ldots, n$,

\begin{equation}\label{ineq-evsn-2}\begin{array} {lll}
e^{p \beta t}|\overline{\Psi}^i(\Ybf^n)_t-\overline{\Psi}^i(\bar{\Ybf}^n)_t|^p \\ \qquad\qquad \le \underset{\tau  \in\mathcal{T}^n_t}{\esssup\,} \underset{\sigma \in\mathcal{T}^n_t}{\esssup\,}\E\left[ \int_t^{\tau \wedge \sigma} 2^{\frac{p}{2}-1} \delta^{\frac{p-2}{2}} \eta^{p}C_f^{p}\left( \frac{1}{n}\sum_{j=1}^n e^{p\beta s}|Y_s^{j,n}-\bar{Y}_s^{j,n}|^p \right) ds  \right.  \\ \left.   \qquad \qquad\qquad\qquad +2^{\frac{p}{2}-1} \left(\gamma_1 e^{\beta \tau}|Y^{i,n}_{\tau}-\bar{Y}^{i,n}_{\tau}|+ \gamma_2 \left(\frac{1}{n}\sum_{j=1}^ne^{p\beta \tau}|Y_{\tau}^{j,n}-\bar{Y}_{\tau}^{j,n}|^p \right)^{\frac{1}{p}} \right.\right. \\ \left. \left.  \qquad\qquad\qquad \qquad\qquad +\kappa_1 e^{\beta \sigma}|Y^{i,n}_{\sigma}-\bar{Y}^{i,n}_{\sigma}|+ \kappa_2 \left(\frac{1}{n}\sum_{j=1}^ne^{p\beta \sigma}|Y_{\sigma}^{j,n}-\bar{Y}_{\sigma}^{j,n}|^p \right)^{\frac{1}{p}}\right)^{p}|\mathcal{F}_t\right].
\end{array}
\end{equation} 
Next, let $\delta >0$, $t\in [T-\delta,T]$ and 
\begin{align*}
\alpha:&=\max(\delta^{1+\frac{p-2}{2}} 2^{\frac{p}{2}-1} \eta^{p}C^{p}_f+2^{\frac{p}{2}-1} 4^{p-1}\gamma^p_1, \delta^{1+\frac{p-2}{2}} \eta^{p}C^{p}_f+2^{\frac{p}{2}-1} 4^{p-1}\gamma^p_2, \nonumber \\ & \qquad\qquad \delta^{1+\frac{p-2}{2}} \eta^{p}C^{p}_f+2^{\frac{p}{2}-1} 4^{p-1}\kappa^p_1, \delta^{1+\frac{p-2}{2}} \eta^{p}C^{p}_f+2^{\frac{p}{2}-1} 4^{p-1}\kappa^p_2).
\end{align*}
 By applying the same arguments as in the previous section, and using the definition \eqref{n-norm-2} we obtain
 \begin{comment}
\begin{equation}\label{ineq-evsn-2}
\begin{array}{lll}
\underset{T-\delta\le s\le T}{\sup\,}e^{p \beta t}|\widetilde{\Phi}^i(\Ybf^n)_t-\widetilde{\Phi}^i(\bar{\Ybf}^n)_t|^p \le\alpha \underset{m\to\infty}{\lim} \E\left[\underset{T-\delta \le s\le
T}{\sup}e^{\beta p s}|Y^{i,n}_s-\bar{Y}^{i,n}_s|^p \right. \\ \left. \qquad\qquad\qquad\qquad\qquad\qquad +\frac{1}{n}\sum_{j=1}^n\underset{T-\delta\le s\le T}{\sup\,}e^{\beta p s}|Y_s^{j,n}-\bar{Y}_s^{j,n}|^p|\mathcal{F}_{\tau_m}\right].
\end{array}
\end{equation} 
and using Fatou's Lemma \textcolor{red}{(instead of Lebesgue dominated convergence theorem)}, we obtain
\begin{equation*}\begin{array} {lll}
\E\left[\underset{T-\delta \le s\le
T}{\sup}e^{\beta p s}|\widetilde{\Phi}^i(\Ybf^n)_s-\widetilde{\Phi}^i(\bar{\Ybf}^n)_s|^p\right] \\ \qquad\qquad\qquad \le \alpha \E\left[\underset{T-\delta \le s\le
T}{\sup}e^{\beta p s}|Y^{i,n}_s-\bar{Y}^{i,n}_s|^p+\frac{1}{n}\sum_{j=1}^n\underset{T-\delta\le s\le T}{\sup\,}e^{\beta p s}|Y_s^{j,n}-\bar{Y}_s^{j,n}|^p\right],
\end{array}
\end{equation*}
where $\alpha:=\max(\delta \eta^{p}C^{p}_f+2^{p-1}\gamma^p_1, \delta \eta^{p}C^{p}_f+2^{p-1}\gamma^p_2)$.
Thus, 
\end{comment}

\begin{equation*}\begin{array} {lll}
\|\overline{\Psi}(\Ybf^n)-\overline{\Psi}(\bar{\Ybf}^n)\|^p_{\lpn_\beta[T-\delta,T]}\le \alpha \|\Ybf^n-\bar{\Ybf}^n\|^p_{\lpn_\beta[T-\delta,T]}.
\end{array}
\end{equation*}
Therefore, if $\gamma_1, \gamma_2, \kappa_1$ and $\kappa_2$ satisfy
$$
\gamma_1^p+\gamma_2^p+\kappa_1^p+\kappa_2^p<4^{1-p} 2^{1-\frac{p}{2}},
%\max(\gamma_1,\gamma_2)< 2^{\frac{1}{p}-1},
$$
we can choose 
$$
0<\delta<\left(\frac{1}{2^{\frac{p}{2}-1}\eta^p C_f^p}\left(1-4^{p-1}2^{\frac{p}{2}-1}(\gamma^p_1+\gamma^p_2+\kappa_1^p+\kappa_2^p)\right)\right)^{\frac{p}{2p-2}}
$$ 
to make $\overline{\Psi}$ a contraction on $\lpn_\beta([T-\delta,T])$, i.e. $\overline{\Psi}$ admits a unique fixed point over $[T-\delta,T]$. Moreover, in view of Assumption \ref{generalAssump} (ii)(a), it holds that $\Ybf^n\in \spn([T-\delta,T])$.\\

\medskip
\noindent \underline{\textit{Step 2.}}(\textit{Existence of a value of the interacting Dynkin game and link with interacting doubly reflected BSDEs}). Let us now show that the game admits a \textit{common value} on $[0,T]$ and that the interacting system of doubly reflected BSDE \eqref{BSDEParticle} has a unique solution, which is related to the value of the interacting Dynkin game.\\ 
\noindent Let $\textbf{V}^n$ be the fixed point associated with the map $\overline{\Psi}$ on $[T-\delta,T]$ obtained in \textit{Step 1}. By classical results on doubly reflected BSDEs (see e.g. \cite{dqs16}), for each $i$ between $1$ and $n$, there exists a unique solution $(Y^{i,n}, Z^{i,n}, U^{i,n}, K^{1,i,n}, K^{2,i,n})$ on $[T-\delta, T]$ of the doubly reflected BSDE associated with driver $\textbf{f}^{i} \circ \textbf{V}^n$ and obstacles $h_1(t,V^{i,n}_{t},L_n[\textbf{V}^n_{t}])$ and $h_2(t,V^{i,n}_{t},L_n[\textbf{V}^n_{t}])$. Furthermore, by using the relation between classical Dynkin games and doubly reflected BSDEs (see Theorem 4.10 in \cite{dqs16}), as well as the fixed point property of $\overline{\Psi}$ and $\underline{\Psi}$ showed above, we get that $Y^{i,n}_t=V_t^{i,n}$, for $t \in [T-\delta, T]$ and $1 \leq i \leq n$.

Applying the same method as in \textit{Step 1} on each time interval $[T-(j+1)\delta,T-j\delta]$, $1 \leq j \leq m$, with the same operator $\overline{\Psi}$, but terminal condition $\textbf{Y}^{i,n}_{T-j\delta}$ at time $T-j\delta$, we build recursively, for $j=1$ to $n$, a solution $(\textbf{Y}^{n,(j)},\textbf{Z}^{n,(j)},\textbf{U}^{n,(j)},\textbf{K}^{1,n,(j)}, \textbf{K}^{2,n,(j)})$, on each time interval $[T-(j+1)\delta,T-j\delta].$ Pasting properly these processes, we obtain a solution $(\textbf{Y}^{n},\textbf{Z}^{n},\textbf{U}^{n},\textbf{K}^{1,n}, \textbf{K}^{2,n})$ of \eqref{BSDEParticle} on $[0,T]$. The uniqueness of the solution of the system \eqref{BSDEParticle} is obtained by the fixed point property of $\textbf{Y}^n$ and the uniqueness of the associated processes  $(\textbf{Z}^{n},\textbf{U}^{n},\textbf{K}^{1,n}, \textbf{K}^{2,n})$, which follows by the uniqueness of the solution of standard doubly reflected BSDEs.  Moreover, using again the relation between standard Dynkin games with doubly reflected BSDEs, we obtain that \textit{the value of the game} $\textbf{V}^n$ exists on the full time interval $[0,T]$ and, furthermore, $V_\cdot^{i,n}={Y}_\cdot^{i,n}$. 
\end{proof}

\paragraph{Existence of an $S$-saddle point.} Assume that the \textit{interacting Dynkin game} admits a \textit{common value} which is denoted by $\textbf{V}^n_t$. We introduce the sequence of payoffs $\left\{P^{i,n}\right\}_{1 \leq i \leq n}$, which, for $1 \leq i \leq n$, is given by
$$P^{i,n}(\tau,\sigma):= h_1(\tau,V^{i,n}_\tau, L_n[\textbf{V}^n_\tau])\ind_{\{\tau \leq \sigma<T\}}+h_2(\sigma,V^{i,n}_\sigma, L_n[\textbf{V}^n_\sigma])\ind_{\{\sigma < \tau\}}+\xi^{i,n} \ind_{\{\tau \wedge \sigma=T\}}.$$

\medskip
We now give the definition of an $S$-saddle point for the \textit{interacting Dynkin game problem}.

\begin{Definition}
Let $S \in \mathcal{T}_0$. The sequence of pairs of stopping times $(\tau^{\star,i,n}, \sigma^{\star,i,n}) \in \mathcal{T}^n_S \times \mathcal{T}^n_S$ is called a \textit{system of $S$-saddle points} for the \textit{interacting Dynkin game problem} if for each $1\le i\le n$ and for each $(\tau, \sigma) \in (\mathcal{T}_S)^2$ we have $\P$-a.s.
\begin{align}
    \mathcal{E}^{\textbf{f}^{i} \circ \textbf{V}^n}_{S, \tau \wedge \sigma^{\star,i,n}}(P^{i,n}(\tau,\sigma^{\star,i,n})) \leq \mathcal{E}^{\textbf{f}^{i} \circ \textbf{V}^n}_{S, \tau^{\star,i,n} \wedge \sigma^{\star,i,n}}(P^{i,n}(\tau^{\star,i,n},\sigma^{\star,i,n})) \leq \mathcal{E}^{\textbf{f}^{i} \circ \textbf{V}^n}_{S, \tau^{\star,i,n} \wedge \sigma}(P^{i,n}(\tau^{\star,i,n},\sigma)).
\end{align}
\end{Definition}

\medskip
In the next theorem, we provide sufficient conditions which ensure the existence of \textit{a system of $S$-saddle points}.

\begin{Theorem}[Existence of a system of $S$-saddle points]\label{saddle1}
Suppose that $\gamma_1$, $\gamma_2$, $\kappa_1$ and $\kappa_2$ satisfy the condition \eqref{ExistenceCond}. Assume that $h_1$ (resp. $h_2$) take the form $h_1(t,\omega,y,\mu):=\xi_t(\omega)+\kappa^1(y,\mu)$ (resp. $h_2(t,\omega,y,\mu):=\zeta_t(\omega)+\kappa^2(y,\mu)$), where $\xi$ and $-\zeta$  belong to $\mathcal{S}^p$ and are  left upper semicontinuous process along stopping times, and $\kappa^1$ (resp $\kappa^2$) are bounded and Lipschitz functions with respect to $(y,\mu)$. 

\noindent For each $S \in \mathcal{T}_0$, consider the system of pairs of stopping times $(\tau_{S}^{\star,i,n}, \sigma_{S}^{\star,i,n})$ defined by
\begin{eqnarray}\label{S} \begin{array}{lll}
    \tau_{S}^{\star,i,n}:=\inf \{t \geq S:\, V^{i}_t=h_1(t,V^{i}_t,L_n[\textbf{V}^n_t])\},\\ \sigma_{S}^{\star,i,n}:=\inf \{t \geq S:\, V^{i}_t=h_2(t,V^{i}_t,L_n[\textbf{V}^n_t])\}.
    \end{array}
\end{eqnarray}
Then the sequence of pairs of stopping time $(\tau_{S}^{\star,i,n}, \sigma_{S}^{\star,i,n})$ given by $\eqref{S}$ is a system of  $S$-saddle points.
\end{Theorem}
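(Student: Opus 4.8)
The plan is to transfer the argument from the proof of Theorem \ref{saddle} to the level of the interacting particle system, reducing the existence of each component's $S$-saddle point to the corresponding statement for standard (non-mean-field) Dynkin games, namely Theorem 3.7 (ii) in \cite{dqs16}. The only non-trivial hypothesis of that result in our setting is left upper semicontinuity along stopping times of the obstacle processes, so the whole proof is organized around establishing that property for the limiting value. Concretely, I would introduce the Picard iteration $\textbf{V}^{n,(m)}:=\overline{\Psi}(\textbf{V}^{n,(m-1)})$ started at $\textbf{V}^{n,(0)}\equiv 0$, which by Theorem \ref{Dynkin2} (and the contraction established in its \textit{Step 1}) converges in $\lpn_\beta$ to the value $\textbf{V}^n$. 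By the classical correspondence between standard doubly reflected BSDEs and Dynkin games, for each $1\le i\le n$ and each $m$ the component $V^{i,n,(m)}$ coincides with the $Y$-part of the doubly reflected BSDE with driver $\textbf{f}^i\circ \textbf{V}^{n,(m-1)}$ and obstacles $h_1(t,V^{i,n,(m-1)}_t,L_n[\textbf{V}^{n,(m-1)}_t])$ and $h_2(t,V^{i,n,(m-1)}_t,L_n[\textbf{V}^{n,(m-1)}_t])$.

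The crucial structural input is the special form $h_1(t,\omega,y,\mu)=\xi_t(\omega)+\kappa^1(y,\mu)$ and $h_2(t,\omega,y,\mu)=\zeta_t(\omega)+\kappa^2(y,\mu)$ with $\xi$ and $-\zeta$ left upper semicontinuous along stopping times and $\kappa^1,\kappa^2$ bounded and Lipschitz. First I would prove by induction on $m$ that every $V^{i,n,(m)}$ admits jumps only at totally inaccessible stopping times. For $m=1$ the obstacles are $\xi_t+\kappa^1(0,\delta_0)$ and $\zeta_t+\kappa^2(0,\delta_0)$ (since $\textbf{V}^{n,(0)}\equiv 0$ gives $L_n[\textbf{V}^{n,(0)}_t]=\delta_0$), which are left upper semicontinuous along stopping times; for a doubly reflected BSDE driven by $B$ and $\tilde N$ with such obstacles the solution inherits predictable jumps only from the reflection, which left upper semicontinuity rules out, so the only jumps are the totally inaccessible ones coming from $\tilde N$. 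For the inductive step, if each $V^{j,n,(m-1)}$ has no predictable jumps, then $t\mapsto \kappa^1(V^{i,n,(m-1)}_t,L_n[\textbf{V}^{n,(m-1)}_t])$ is continuous at every predictable stopping time (by continuity of $\kappa^1$ and of $\textbf{x}\mapsto L_n[\textbf{x}]$ in the $\mathcal{W}_p$ metric, using \eqref{ineq-wass-1}), so $h_1(t,V^{i,n,(m-1)}_t,L_n[\textbf{V}^{n,(m-1)}_t])$ remains left upper semicontinuous along stopping times, and the same reflection argument applies to $V^{i,n,(m)}$.

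Next I would pass this regularity to the limit. For any predictable $\tau\in\mathcal{T}^n_0$ we have $\Delta V^{i,n,(m)}_\tau=0$ a.s.\ for all $m$, whence
\begin{align*}
\mathbb{E}\left[|\Delta V^{i,n}_\tau|^p\right]=\mathbb{E}\left[|\Delta V^{i,n}_\tau-\Delta V^{i,n,(m)}_\tau|^p\right]\le 2^p\sup_{\tau\in\mathcal{T}^n_0}\mathbb{E}\left[|V^{i,n}_\tau-V^{i,n,(m)}_\tau|^p\right]\longrightarrow 0 \quad \text{as } m\to\infty,
\end{align*}
so that $\Delta V^{i,n}_\tau=0$ a.s. Consequently $t\mapsto\kappa^1(V^{i,n}_t,L_n[\textbf{V}^n_t])$ and $t\mapsto\kappa^2(V^{i,n}_t,L_n[\textbf{V}^n_t])$ are continuous along predictable stopping times, and therefore $h_1(t,V^{i,n}_t,L_n[\textbf{V}^n_t])$ and $-h_2(t,V^{i,n}_t,L_n[\textbf{V}^n_t])$ are left upper semicontinuous along stopping times. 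Applying Theorem 3.7 (ii) in \cite{dqs16} to the $i$-th standard Dynkin game with driver $\textbf{f}^i\circ\textbf{V}^n$, payoff $P^{i,n}$ and these obstacles then yields that the pair $(\tau_S^{\star,i,n},\sigma_S^{\star,i,n})$ of \eqref{S} is an $S$-saddle point; since this holds for every $1\le i\le n$, the family is a system of $S$-saddle points.

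The main obstacle I expect is establishing and correctly propagating the ``no predictable jumps'' property through the iteration — in particular verifying that the empirical-measure coupling $L_n[\cdot]$ and the functions $\kappa^1,\kappa^2$ do not create predictable jumps, and that left upper semicontinuity of the obstacles survives both the recursion and the $\lpn_\beta$-limit. Once the limiting value $\textbf{V}^n$ is shown to have no predictable jumps, the saddle-point conclusion is a direct component-wise application of the standard Dynkin-game theory of \cite{dqs16}.
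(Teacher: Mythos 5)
Your proposal is correct and follows essentially the same route as the paper's proof: the Picard iteration $\textbf{V}^{n,(m)}=\overline{\Psi}(\textbf{V}^{n,(m-1)})$ started at zero, the inductive argument that each iterate has jumps only at totally inaccessible stopping times, the passage to the limit via the estimate $\mathbb{E}[|\Delta V^{i,n}_\tau|^p]\le 2^p\sup_{\tau}\mathbb{E}[|V^{i,n}_\tau-V^{i,n,(m)}_\tau|^p]$ for predictable $\tau$, and the componentwise application of Theorem 3.7 (ii) of \cite{dqs16}. Your additional remarks on why $\kappa^1,\kappa^2$ and the empirical-measure coupling preserve the no-predictable-jumps property simply make explicit what the paper leaves implicit.
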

\begin{proof}
\begin{comment}
By using the results on classical Dynkin games with payoffs $f \circ V, h_1, h_2$ (see Theorem 4.10 in \cite{dqs16}), we get that $V_t=Y_t$, with $(Y,Z,U,K^1,K^2)$ represents the solution of a doubly reflected BSDE with driver $f \circ V$ and payoffs $h_1(t,V_t,\mathbb{P}_{V_t})$ and $h_2(t,V_t,\mathbb{P}_{V_t})$. 
\end{comment}
Consider the following iterative algorithm. Let $\textbf{V}^{(0),n} \equiv 0$ be the starting point and define
$$\textbf{V}^{(m),n}:=\overline{\Psi}(\textbf{V}^{(m-1),n}),$$
where the inequality is understood component by component.
By using classical results on doubly reflected BSDEs and their relation with classical Dynkin games, we obtain that, for $1 \le i \le n$, $V^{(m),i,n}$ coincides with the component $Y^{i,n}$ of the solution of the doubly reflected BSDE associated with $\textbf{f}^{i} \circ \textbf{V}^{(m-1),n}$ and obstacles $h_1(t,V^{(m-1),i,n}_t,L_n[\textbf{V}^{(m-1),n}_t])$ and $h_2(t,V^{(m-1),i,n}_t,L_n[\textbf{V}^{(m-1),n}_t])$. Due to the assumptions on $h_1$ and $h_2$, we get that, for each $i$, $V^{(1),i,n}$ only admits jumps at totally inaccessible stopping times. By induction, the same holds for $V^{(m)}$, for all $m$.

Since the condition $\eqref{ExistenceCond}$ is satisfied, by Theorem \ref{Dynkin2}, the sequence $\textbf{V}_t^{(m),n}$ is a Cauchy sequence for the norm $\mathbb{L}_\beta^{p, \otimes n}$ and therefore converges in $\mathbb{L}_\beta^{p, \otimes n}$ to the fixed point of the map $\overline{\Psi}$.\\
Let $\tau \in \mathcal{T}_0$ be a predictable stopping time. Since $\Delta V^{(m),i,n}_\tau=0$ a.s. for all $m$ and for all $1 \leq i \leq n$, we obtain
\begin{align}
    \mathbb{E}\left[|\Delta V^{i,n}_\tau|^p\right]=\mathbb{E}\left[|\Delta V^{i,n}_\tau-\Delta V^{(m),i,n}_\tau|^p\right] \leq 2^p \sup_{\tau \in \mathcal{T}_0}\mathbb{E}\left[|V^{i,n}_\tau-V^{(m),i,n}_\tau|^p\right],
\end{align}
which implies that $\Delta V^{i,n}_\tau=0$ a.s. for all $1 \le i \le n$ and consequently, $h_1(t,V^{i}_t,L_n[\textbf{V}^n_t])$ and $-h_2(t,V^{i}_t,L_n[\textbf{V}^n_t])$ are left upper semicontinuous along stopping times. By Theorem 3.7 (ii) in \cite{dqs16}, for each $1 \le i \le n$, we get that $(\tau_{S}^{\star,i,n}, \sigma_{S}^{\star,i,n})$ given by $\eqref{S}$ is a $S$-saddle point.
\end{proof}

\section{Propagation of chaos}\label{chaos}

This section is concerned with the convergence of the sequence of value functions $V^{i,n}$ of the interacting Dynkin games to i.i.d. copies of the value function $V$ of the mean-field Dynkin game which, by the results from the previous section, consists in showing the convergence of the component $Y^{i,n}$ of the solution of the interacting system of doubly reflected BSDEs \eqref{BSDEParticle} to i.i.d. copies of the component $Y$ of the mean-field doubly Reflected BSDE with driver $f\circ Y$ and terminal condition $h_1(\tau,Y_\tau,\P_{Y_s|s=\tau})\ind_{\{\tau\le \sigma<T\}}+h_2(\sigma,Y_\sigma,\P_{Y_s|s=\sigma})\ind_{\{\sigma<\tau\}}+\xi^{i}\ind_{\{\tau \wedge \sigma=T\}}$. These convergence results yield the propagation of chaos result, as it will be explained below.

\medskip
To establish the propagation of chaos property of the particle system \eqref{BSDEParticle}, we make the following additional assumptions.

\begin{Assumption}\label{Assump:chaos}
\begin{itemize}
    \item [(i)] The sequence  $ \xi^n=(\xi^{1,n},\xi^{2,n},\ldots,\xi^{n,n})$ is exchangeable i.e. the sequence of probability laws $\mu^n$ of $\xi^n$ on $\R^n$ is symmetric.
    
   \item[(ii)] For each $i\ge 1$, $\xi^{i,n}$ converges in $L^p$ to $ \xi^i$, i.e. $$\lim_{n\to\infty}\E[|\xi^{i,n}-\xi^i|^p]=0,$$
        
        where the random variables $\xi^{i}\in L^p(\Fc^i_T)$ are independent and equally distributed (iid) with probability law $\mu$.
    \item[(iii)] The component $(Y_t)$ of the unique solution of the mean-field doubly reflected BSDE  \eqref{BSDE1} has jumps only at totally inaccessible stopping times.
\end{itemize}
    \end{Assumption}

For $m\ge 1$, introduce the Polish spaces
$\mathbb{H}^{2,m}:=L^2([0,T];\R^m)$ and $\mathbb{H}_{\nu}^{2,m}$, the space of measurable functions $\ell: \,[0,T]\times \R^*\longrightarrow \R^n$ such that $\int_0^T\int_{\R^*}\sum_{j=1}^m |\ell^j(t,u)|^2\nu(du)dt< \infty$.     
    
\medskip
In the following proposition, we show that the \textit{exchangeability property} transfers from the terminal conditions to the associated solution processes.
    \begin{Proposition}[Exchangeability property]\label{exchange}
Assume the sequence  $ \xi^n=(\xi^{1,n},\xi^{2,n},\ldots,\xi^{n,n})$ is exchangeable i.e. the sequence of probability laws $\mu^n$ of $\xi^n$ on $\R^n$ is symmetric. Then the processes $(Y^{i,n},Z^{i,n},U^{i,n},K^{1,i,n},K^{2,i,n}),\,i=1,\ldots, n,$ solutions of  the systems \eqref{BSDEParticle} are exchangeable.
\end{Proposition}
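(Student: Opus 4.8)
The plan is to exploit the permutation symmetry of the interacting system \eqref{BSDEParticle} together with the pathwise uniqueness established in Theorem \ref{Dynkin2}. First I would record the solution as a measurable functional of its input data. Write $\mathcal{D}:=((B^i,\tilde N^i),\xi^{i,n})_{1\le i\le n}$ for the driving noises and terminal conditions, and let $\mathcal{S}(\mathcal{D}):=(Y^{i,n},Z^{i,n},U^{i,n},K^{1,i,n},K^{2,i,n})_{1\le i\le n}$ denote the corresponding solution. By the fixed-point construction in the proof of Theorem \ref{Dynkin2} (the successive Picard iterates are measurable functionals of $\mathcal{D}$, and so is their $\mathbb{L}^p_\beta$-limit), the map $\mathcal{D}\mapsto \mathcal{S}(\mathcal{D})$ is well defined and measurable, and by uniqueness it is the only solution.

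Next I would establish the \emph{equivariance} of $\mathcal{S}$ under the symmetric group $S_n$. Fix $\pi\in S_n$ and let $\pi$ act on $\mathcal{D}$ by permuting the particle index, $\pi\cdot\mathcal{D}:=((B^{\pi(i)},\tilde N^{\pi(i)}),\xi^{\pi(i),n})_i$, and on a solution tuple by relabeling its components through $\pi$. The key observation is that the empirical measure is permutation invariant: $L_n[(Y^{\pi(k),n})_k]_s=L_n[\mathbf{Y}^n_s]$. Hence, substituting $\hat Y^{i,n}:=Y^{\pi(i),n}$, $\hat Z^{i,n}:=Z^{\pi(i),n}$, and so on into the $\pi(i)$-th equation of \eqref{BSDEParticle} shows that $(\hat Y^{i,n},\dots)_i$ satisfies the same system of doubly reflected BSDEs but driven by $(B^{\pi(i)},\tilde N^{\pi(i)})$ and with terminal data $\xi^{\pi(i),n}$; the barrier constraints, the Skorokhod conditions and the mutual singularity $dK^{1,\pi(i),n}_t\perp dK^{2,\pi(i),n}_t$ are all preserved, because $h_1,h_2$ depend on the particles only through $Y^{i,n}$ and the (invariant) empirical measure. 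By uniqueness this means $\mathcal{S}(\pi\cdot\mathcal{D})=\pi\cdot\mathcal{S}(\mathcal{D})$.

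Finally I would combine equivariance with a distributional identity for the data. Since the noises $(B^i,\tilde N^i)_i$ are i.i.d.\ and the law $\mu^n$ of $\xi^n$ is symmetric, the joint law of $\mathcal{D}$ is invariant under the diagonal action of $S_n$, i.e.\ $\pi\cdot\mathcal{D}\stackrel{d}{=}\mathcal{D}$ for every $\pi\in S_n$. Applying the fixed measurable map $\mathcal{S}$ to both sides and using the equivariance identity gives
\begin{equation*}
\pi\cdot\mathcal{S}(\mathcal{D})=\mathcal{S}(\pi\cdot\mathcal{D})\stackrel{d}{=}\mathcal{S}(\mathcal{D}),
\end{equation*}
which is precisely the assertion that $(Y^{i,n},Z^{i,n},U^{i,n},K^{1,i,n},K^{2,i,n})_{i=1,\dots,n}$ is exchangeable.

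The main obstacle is the second step: one has to verify carefully that the \emph{entire} solution structure---not just the $Y$-component, but also the reflection processes $K^{1,i,n},K^{2,i,n}$ together with their Skorokhod and mutual-singularity conditions---transforms covariantly under relabeling, which rests squarely on the permutation invariance of the empirical measure $L_n[\mathbf{Y}^n_s]$ entering both the driver and the two obstacles. A secondary technical point is to ensure that $\mathcal{S}$ is a bona fide measurable functional of $\mathcal{D}$, so that the distributional transfer $\pi\cdot\mathcal{D}\stackrel{d}{=}\mathcal{D}\Rightarrow \mathcal{S}(\pi\cdot\mathcal{D})\stackrel{d}{=}\mathcal{S}(\mathcal{D})$ is legitimate; this follows from the contraction-and-pasting construction of Theorem \ref{Dynkin2}, in which each iterate is an explicit measurable functional of the data.
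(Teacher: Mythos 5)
Your first two steps (measurability of the solution map via the Picard/pasting construction, and the $S_n$-equivariance of the system coming from the permutation invariance of $L_n[\mathbf{Y}^n_s]$) are sound and coincide with the first half of the argument the paper relies on. The gap is in the final step: the claim that the joint law of $\mathcal{D}=((B^i,\tilde N^i),\xi^{i,n})_{1\le i\le n}$ is invariant under the diagonal action of $S_n$ does \emph{not} follow from the stated hypotheses. The assumption is only that the law of $\xi^n$ on $\R^n$ is symmetric; it says nothing about how the $\xi^{i,n}$ are jointly coupled to the noises. Concretely, take $n=2$ and $\xi^{1,2}=\xi^{2,2}=\phi(B^1_T)$ for a suitable bounded $\phi$: the pair $(\xi^{1,2},\xi^{2,2})$ is trivially exchangeable, yet $\bigl((B^1,\xi^{1,2}),(B^2,\xi^{2,2})\bigr)$ is not exchangeable as a sequence of pairs, since the first terminal condition is a deterministic functional of its attached noise while the second is independent of $B^2$. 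Hence $\pi\cdot\mathcal{D}\stackrel{d}{=}\mathcal{D}$ fails, and the identity $\mathcal{S}(\pi\cdot\mathcal{D})\stackrel{d}{=}\mathcal{S}(\mathcal{D})$ cannot be obtained by simply pushing forward a fixed measurable map. (In this particular example the conclusion of the proposition still holds, because the two components of the solution coincide a.s.; the example only shows that your intermediate distributional claim is false, not the proposition.)

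This is precisely the difficulty that the paper's (omitted) proof, following \cite{ddz21}, is designed to circumvent: after establishing the pathwise statement that the solutions coincide under relabelling \emph{whenever the permuted terminal conditions coincide almost surely with the original ones}, one passes from exchangeability in law of $\xi^n$ to exchangeability in law of the solutions by a coupling argument — a careful application of the Skorohod representation theorem on the product Polish space carrying $(\xi^n,\{B^i\},\{\tilde N^i\},\{\Theta^{i,n}\})$, in the spirit of the Yamada--Watanabe argument and its extension by Delarue to forward–backward equations. To repair your proof you would either need to strengthen the hypothesis to exchangeability of the pairs $((B^i,\tilde N^i),\xi^{i,n})_i$, or replace your third step by such a transfer-of-law/coupling argument.
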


The proof is similar to that of \cite[Proposition 4.1]{ddz21}. We omit it.
\begin{comment}
\begin{proof} Since 
for any permutation $\sigma$ of the set $\{ 1,2,\ldots,n\}$, we have
$$
\frac{1}{n}\sum_{i=1}^n\delta_{Y^{i,n}}=\frac{1}{n}\sum_{i=1}^n\delta_{Y^{\sigma(i),n}},
$$
by the pathwise existence and uniqueness results obtained in Theorem \ref{existParticle-2}, we have
$$
(\Theta^{1,n},\Theta^{2,n},\ldots,\Theta^{n,n})=(\Theta^{\sigma(1),n},\Theta^{\sigma(2),n},\ldots,\Theta^{\sigma(n),n}),\quad \text{a.s.}
$$
whenever $(\xi^{1,n},\ldots,\xi^{n,n})=(\xi^{\sigma(1),n},\ldots,\xi^{\sigma(n),n})$ a.s.. Now, $\xi^n$ being exchangeable, by a careful use of the Skorohod representation theorem applied to the Polish space  $$
\R^n\times (\mathbb{D},d^0)^{\otimes n}\times C([0,T];\R^n)\times(\mathbb{D},d^0)^{\otimes n}\times \mathbb{H}^{2,n\otimes n}\times \mathbb{H}_{\nu}^{2,n\otimes n}\times (\mathbb{D},d^0)^{\otimes n}
$$
where the process $(\xi^n,\{B^i\}_{i=1}^n, \{\tilde{N}^i\}_{i=1}^n,\{\Theta^{i,n}\}_{i=1}^n)$ take their values, as used by Yamada and Watanabe \cite{yw71} to
prove uniqueness of solutions to SDEs and extended  in 
Delarue \cite{Delarue} (see the proof after Remark 1.6, pp. 224-227) to prove uniqueness in law for forward-backward SDEs, it follows that each of the systems is exchangeable i.e. for every permutation $\sigma$ of $\{1,2,\ldots,n\}$,
$$
\text{Law}(\Theta^{1,n},\Theta^{2,n},\ldots,\Theta^{n,n})=\text{Law}(\Theta^{\sigma(1),n},\Theta^{\sigma(2),n},\ldots,\Theta^{\sigma(n),n}).$$
\qed
\end{proof}
\end{comment}

Consider the product space 
$$
G:=\mathbb{D}\times \mathbb{H}^{2,n}\times \mathbb{H}_{\nu}^{2,n}\times \mathbb{D} 
$$
endowed with the product metric 
$$
\delta(\theta,\theta^{\prime}):=\left(d^o(y,y^{\prime})^p+\|z-z^{\prime}\|^p_{\mathbb{H}^{2,n}}+\|u-u^{\prime}\|^p_{\mathbb{H}_{\nu}^{2,n}}+d^o(k,k^{\prime})^p\right)^{\frac{1}{p}}.
$$
where, $\theta:=(y,z,u,k)$ and $\theta^{\prime}:=(y^{\prime},z^{\prime},u^{\prime},k)$.

We define the Wasserstein metric on $\mathcal{P}_p(G)$ by 
\begin{equation}\label{W-simultan} 
D_G(P,Q)=\inf\left\{\left(\int_{G\times G}  \delta(\theta,\theta^{\prime})^p R(d\theta,d\theta^{\prime})\right)^{1/p}\right\},
\end{equation}
over $R\in\mathcal{P}(G\times G)$ with marginals $P$ and $Q$. Since $(G,\delta)$ is a Polish space, $(\mathcal{P}_p(G), D_G)$ is a Polish space and induces the topology of weak convergence.

\medskip
Let $(Y^i,Z^i,U^i,K^{1,i},K^{2,i}), i=1,\ldots, n$, with independent terminal values $Y^i_T=\xi^i,i=1,\ldots, n$, be independent copies of $(Y,Z,U,K^1, K^2)$, the solution of \eqref{BSDE1}. 
More precisely, for each $i=1,\ldots,n$, $(Y^i,Z^i,U^i,K^{1,i},K^{2,i})$ is the unique solution of the reflected MF-BSDE
\begin{align}\label{BSDEParticle-Theta}
    \begin{cases} \quad
    Y^{i}_t = \xi^{i} +\int_t^T f(s,Y^{i}_{s},Z^{i}_{s},U^{i}_{s},\mathbb{P}_{Y_s^{i}})ds + K^{1,i}_T - K^{1,i}_t- K^{2,i}_T + K^{2,i}_t\\ \qquad\qquad\qquad \qquad -\int_t^T Z^{i}_s dB^i_s -  \int_t^T \int_{\R^\star}  U^{i}_s(e) \tilde N^i(ds,de), \\ \quad 
    h_2(t,Y^{i}_{t},\mathbb{P}_{Y_t^{i}}) \geq Y^{i}_{t} \geq h_1(t,Y^{i}_{t},\mathbb{P}_{Y_t^{i}}),  \quad \forall t \in [0,T]; \,\, Y_T^{i}=\xi^{i}, \\ \quad
    \int_0^T (Y^{i}_{t^-}-h_1(t^-,Y^{i}_{t^-},\mathbb{P}_{Y_{t^-}^{i}}))dK^{1,i}_t = 0, \\ \quad
    \int_0^T (Y^{i}_{t^-}-h_2(t^-,Y^{i}_{t^-},\mathbb{P}_{Y_{t^-}^{i}}))dK^{2,i}_t = 0,\\ \quad
    dK^{1,i}_t \perp dK^{2,i}_t.
    \end{cases}
\end{align}
In the sequel, we denote $(f \circ Y^{i})(t,y,z,u):=f(t,y,z, u,\mathbb{P}_{Y^{i}_t})$.

Introduce the notation
\begin{equation}\label{W}
W:=K^1-K^2,\quad W^i:=K^{1,i}-K^{2,i},\quad W^{i,n}:=K^{1,i,n}-K^{2,i,n},
\end{equation}

and consider the processes
\begin{equation}\label{Theta}
\Theta:=(Y,Z,U,W),\quad \Theta^{i}:=(Y^i,Z^i,U^i,W^i),\quad \Theta^{i,n}:=(Y^{i,n},Z^{i,n},U^{i,n},W^{i,n}).
\end{equation}

\noindent For any fixed $1\leq k\leq n$,  let
\begin{equation}\label{P-Theta}
\P^{k,n}:=\text{Law}\,(\Theta^{1,n},\Theta^{2,n},\ldots,\Theta^{k,n}),\quad \P_{\Theta}^{\otimes k}:=\text{Law}\,(\Theta^1,\Theta^2,\ldots,\Theta^k).
\end{equation}
be the joint probability laws of the processes $(\Theta^{1,n},\Theta^{2,n},\ldots,\Theta^{k,n})$ and $(\Theta^1,\Theta^2,\ldots,\Theta^k)$, respectively.

From the definition of the distance $D_G$, we obtain the following inequality. 
\begin{align}\label{chaos-0}
D_G^p(\P^{k,n},\P_{\Theta}^{\otimes k})&\le 
k\sup_{i\le k}\left(\|Y^{i,n}-Y^i\|^p_{\sp}+\|Z^{i,n}-Z^i{\bf e}_i\|^p_{\mathcal{H}^{p,n}}  \right. \nonumber \\  &\left. \qquad \qquad\qquad +\|U^{i,n}-U^i{\bf e}_i\|^p_{\mathcal{H}^{p,n}_{\nu}} +\|W^{i,n}-W^{i}\|^p_{\sp}\right),
\end{align}
where for each $i=1,\ldots, n$,  ${\bf e}_i:=(0,\ldots,0,\underbrace{1}_{i},0,\ldots,0)$.

Before we state and prove the propagation of chaos result, we give here the following convergence result of the empirical laws of i.i.d. copies $Y^1,Y^2,\ldots,Y^n$ solutions of \eqref{BSDEParticle-Theta} and convergence of the particle system \eqref{BSDEParticle} to the solution to \eqref{BSDE1}.

\begin{Theorem}[Law of Large Numbers]\label{LLN-2} Let $Y^1,Y^2,\ldots,Y^n$ with terminal values $Y^i_T=\xi^i$ be independent copies of  the solution $Y$ of \eqref{BSDE1}. Then, we have
\begin{equation}\label{GC-2}
\lim_{n\to\infty}\E\left[\underset{0\le t\le T}\sup\, \mathcal{W}_p^p(L_n[\textbf{Y}_t],\P_{Y_t})\right]=0.
\end{equation}
\end{Theorem}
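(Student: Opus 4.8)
The plan is to work on a deterministic time grid and reduce the uniform-in-time statement to three pieces, controlling the empirical-measure fluctuation, the oscillation of the paths, and the oscillation of the law-curve separately. The \emph{main obstacle} is precisely the supremum over $t$ in the presence of the jumps of $Y$ (the empirical measure $L_n[\textbf{Y}_t]$ inherits jumps from the paths, so one cannot argue by a uniform-continuity modulus), and the two limits must therefore be taken in the order ``$n\to\infty$ first, mesh $\to 0$ second''. Fix a partition $0=t_0<\dots<t_N=T$ of mesh $\eta$, write $I_j:=[t_j,t_{j+1})$, and recall $Y\in\mathcal{S}^p$ gives $\E[\sup_t|Y_t|^p]<\infty$. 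For $t\in I_j$, the triangle inequality for $\mathcal{W}_p$ together with $(a+b+c)^p\le 3^{p-1}(a^p+b^p+c^p)$ gives
\[
\mathcal{W}_p^p(L_n[\textbf{Y}_t],\P_{Y_t})\le 3^{p-1}\Big(\mathcal{W}_p^p(L_n[\textbf{Y}_t],L_n[\textbf{Y}_{t_j}])+\mathcal{W}_p^p(L_n[\textbf{Y}_{t_j}],\P_{Y_{t_j}})+\mathcal{W}_p^p(\P_{Y_{t_j}},\P_{Y_t})\Big).
\]
Coupling the two empirical measures through the common index $k$ gives $\mathcal{W}_p^p(L_n[\textbf{Y}_t],L_n[\textbf{Y}_{t_j}])\le \frac{1}{n}\sum_{k=1}^n|Y^k_t-Y^k_{t_j}|^p$, and $\mathcal{W}_p^p(\P_{Y_{t_j}},\P_{Y_t})\le\E[|Y_t-Y_{t_j}|^p]$. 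Setting $g(\eta):=\sup_{s\in[0,T]}\E[\sup_{s\le t\le(s+\eta)\wedge T}|Y_t-Y_s|^p]$ and taking $\sup_{t\in I_j}$, then $\max_j$, then expectations, I get
\[
\E\Big[\sup_{0\le t\le T}\mathcal{W}_p^p(L_n[\textbf{Y}_t],\P_{Y_t})\Big]\le 3^{p-1}\Big(\E\big[\max_j\tfrac{1}{n}\textstyle\sum_k\sup_{t\in I_j}|Y^k_t-Y^k_{t_j}|^p\big]+\E\big[\max_j\mathcal{W}_p^p(L_n[\textbf{Y}_{t_j}],\P_{Y_{t_j}})\big]+g(\eta)\Big).
\]

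Next I send $n\to\infty$ with the grid frozen. The middle term is a maximum over the finitely many fixed times $t_j$ of $\E[\mathcal{W}_p^p(L_n[\textbf{Y}_{t_j}],\P_{Y_{t_j}})]$, each of which tends to $0$ by the classical law of large numbers for empirical measures of i.i.d.\ real variables with finite $p$-th moment (the $(Y^k_{t_j})_k$ are i.i.d.\ with law $\P_{Y_{t_j}}\in\mathcal{P}_p(\R)$), so the whole term vanishes. For the first term, for each fixed $j$ the average $\frac{1}{n}\sum_k\sup_{t\in I_j}|Y^k_t-Y^k_{t_j}|^p$ is an empirical mean of i.i.d.\ integrable variables (dominated by $2^p\sup_t|Y^k_t|^p$) and converges a.s.\ and in $L^1$ to $g_j(\eta):=\E[\sup_{t\in I_j}|Y_t-Y_{t_j}|^p]\le g(\eta)$; since there are finitely many $j$, the maximum converges in $L^1$ to $\max_j g_j(\eta)\le g(\eta)$. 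It is essential here to keep the empirical average \emph{inside} the maximum: the naive bound $\max_j\frac1n\sum_k\le\frac1n\sum_k\max_j$ would reintroduce the full path oscillation over the interval carrying a jump and would not be small. Combining, $\limsup_{n\to\infty}\E[\sup_t\mathcal{W}_p^p(L_n[\textbf{Y}_t],\P_{Y_t})]\le 2\cdot 3^{p-1}g(\eta)$.

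It remains to prove $g(\eta)\to 0$ as $\eta\to0$, which is where Assumption \ref{Assump:chaos}(iii) is decisive. Since all jumps of $Y$ occur at totally inaccessible times, $Y$ is quasi-left-continuous, so $\P[\Delta Y_s\neq 0]=0$ for every fixed $s$; together with the c\`adl\`ag property this makes $t\mapsto Y_t$ continuous from $[0,T]$ into $L^p(\Omega)$ (by dominated convergence with envelope $\sup_t|Y_t|^p$), which already controls the law-modulus term. To obtain the \emph{uniform} modulus $g(\eta)\to 0$, I argue by contradiction: if it failed there would exist $\eps>0$, mesh sizes $\eta_m\downarrow 0$ and base points $s_m$ with $\E[\sup_{s_m\le t\le s_m+\eta_m}|Y_t-Y_{s_m}|^p]\ge\eps$; extracting $s_m\to s^\star$, the a.s.\ continuity of the path at the fixed time $s^\star$ forces $\sup_{s_m\le t\le s_m+\eta_m}|Y_t-Y_{s_m}|\to 0$ a.s., and dominated convergence yields a contradiction. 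Letting $\eta\to 0$ in the $\limsup$ bound then gives $\lim_{n\to\infty}\E[\sup_t\mathcal{W}_p^p(L_n[\textbf{Y}_t],\P_{Y_t})]=0$, which is \eqref{GC-2}.
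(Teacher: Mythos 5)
Your proof is correct, but it takes a genuinely different route from the one the paper relies on (the proof is omitted in the text and referred to \cite{ddz21}; it works at the level of the \emph{path-space} empirical measure in the Skorokhod topology: one bounds the marginal distance $\mathcal{W}_p^p(L_n[\textbf{Y}_t],\P_{Y_t})$ by the path-space Wasserstein distance plus the c\`adl\`ag modulus $w'_y(2\delta)$ plus a maximal-jump term, and the residual term $\sup_t \E[\sup_{|s-t|\le\delta}|\Delta Y_s|^p]$ is then killed \emph{quantitatively} by observing that Assumption \ref{Assump:chaos}(iii) forces $K^1,K^2$ to be continuous, so $\Delta Y=\Delta J$ with $J=\int\!\!\int U_s(e)N(ds,de)$, and by truncation estimates using $U\in\mathcal{H}^p_\nu$). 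You instead discretize time, use only the scalar Wasserstein law of large numbers at the finitely many grid points, and reduce everything to the uniform $L^p$-modulus $g(\eta)=\sup_s\E[\sup_{s\le t\le (s+\eta)\wedge T}|Y_t-Y_s|^p]\to 0$, which you obtain \emph{softly} by compactness and contradiction from quasi-left-continuity of $Y$ at the limiting deterministic time $s^\star$ — which is exactly what Assumption \ref{Assump:chaos}(iii) delivers, since a deterministic time is predictable. Both arguments hinge on the same two points (the jump/oscillation term is the only delicate one, and $n\to\infty$ must be taken before the mesh parameter), and your remark about keeping the empirical average inside the $\max_j$ is exactly the right one. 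What your route buys is self-containedness and generality: it needs no Glivenko--Cantelli theorem on $(\mathbb{D},d^o)$ and no representation of the jumps through the Poisson integral, and it applies verbatim to any quasi-left-continuous process in $\mathcal{S}^p$. What the paper's route buys is a quantitative bound on the jump contribution in terms of $\|U\|_{\mathcal{H}^p_\nu}$ and a statement at the level of laws on path space, which is the form actually reused in the propagation-of-chaos results of Section \ref{chaos}.
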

The proof is similar to that of \cite[Theorem 4.1]{ddz21}, so we omit it.

We now provide the following convergence result  for the solution $Y^{i,n}$ of \eqref{BSDEParticle}.

\medskip
\begin{Proposition}[Convergence of the $Y^{i,n}$'s]\label{chaos-Y-1}
Assume that for some $p\ge 2$,  $\gamma_1$, $\gamma_2$, $\kappa_1$ and $\kappa_2$ satisfy 
\begin{align}\label{smallnessCond-chaos-1}
  2^{\frac{p}{2}-1} \textcolor{black}{7^{p-1}}(\gamma_1^p+\gamma_2^p+\kappa_1^p +\kappa_2^p)<1.
\end{align}
Then, under Assumptions \ref{generalAssump}, \ref{Assump-PS} and \ref{Assump:chaos}, we have
\begin{equation}\label{chaos-Y-1-1}
\underset{n\to\infty}\lim \,\underset{0\le t\le T}{\sup}\,E[|Y^{i,n}_t-Y^i_t|^p]=0.
\end{equation}
In particular,
\begin{equation}\label{chaos-Y-1-2}
\underset{n\to\infty}\lim \, \|Y^{i,n}-Y^i\|_{\mathcal{H}^{p,1}}=0.
\end{equation}
\end{Proposition}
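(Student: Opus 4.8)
The plan is to compare, for each fixed $i$, the particle value $Y^{i,n}$ with the independent copy $Y^i$ -- both driven by the same pair $(B^i,\tilde N^i)$ -- and to close a Grönwall estimate on the weighted error. By Theorem \ref{Dynkin2}, $Y^{i,n}$ is the $Y$-component of \eqref{BSDEParticle}, so for each $t$
$$Y^{i,n}_t=\essinf_{\sigma\in\mathcal T^n_t}\,\esssup_{\tau\in\mathcal T^n_t}\,\mathcal E^{\mathbf f^i\circ\mathbf Y^n}_{t,\tau\wedge\sigma}\big[P^{i,n}(\tau,\sigma)\big],$$
while by Theorem \ref{Dynkin} applied to the copy (for which $\P_{Y^i_s}=\P_{Y_s}$) $Y^i$ has the analogous representation with driver $f(\cdot,\cdot,\cdot,\cdot,\P_{Y_s})$, barriers $h_1(\cdot,\cdot,\P_{Y_s})$, $h_2(\cdot,\cdot,\P_{Y_s})$ and terminal value $\xi^i$. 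Since a difference of two $\essinf\,\esssup$ expressions is dominated by the $\esssup_\tau\esssup_\sigma$ of the differences of the underlying nonlinear expectations over the common terminal time $\tau\wedge\sigma$, I would introduce the stopping-time-indexed error $\psi_n(t):=\sup_{\tau\in\mathcal T^n_t}\E\big[e^{p\beta\tau}|Y^{i,n}_\tau-Y^i_\tau|^p\big]$, which, by exchangeability of the coupled family $(Y^{j,n},Y^j)_{1\le j\le n}$ (Proposition \ref{exchange} together with the i.i.d. structure of $(\xi^j)$), does not depend on $i$.

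Next I would apply the a priori estimate of Proposition A.1 (see the appendix) to each such difference, exactly as in \textit{Step 1} of the proofs of Theorems \ref{Dynkin} and \ref{Dynkin2}. This yields a driver contribution proportional to $\eta^p C_f^p\,\E\big[\int_t^{\tau\wedge\sigma}e^{p\beta s}\mathcal W_p^p(L_n[\mathbf Y^n_s],\P_{Y_s})\,ds\mid\mathcal F_t\big]$ and a terminal contribution proportional to $e^{p\beta(\tau\wedge\sigma)}$ times the $p$-th power of the payoff difference at $\tau\wedge\sigma$. Bounding the latter by the Lipschitz estimates \eqref{f-h-lip-4} and splitting each empirical Wasserstein distance by the triangle inequality, $\mathcal W_p(L_n[\mathbf Y^n_\rho],\P_{Y_\rho})\le\mathcal W_p(L_n[\mathbf Y^n_\rho],L_n[\mathbf Y_\rho])+\mathcal W_p(L_n[\mathbf Y_\rho],\P_{Y_\rho})$ for $\rho\in\{\tau,\sigma\}$, the payoff difference becomes a sum of seven terms: the four barrier slopes $\gamma_1|Y^{i,n}_\tau-Y^i_\tau|$, $\gamma_2\mathcal W_p(L_n[\mathbf Y^n_\tau],L_n[\mathbf Y_\tau])$, $\kappa_1|Y^{i,n}_\sigma-Y^i_\sigma|$, $\kappa_2\mathcal W_p(L_n[\mathbf Y^n_\sigma],L_n[\mathbf Y_\sigma])$, the two law-of-large-numbers terms $\gamma_2\mathcal W_p(L_n[\mathbf Y_\tau],\P_{Y_\tau})$ and $\kappa_2\mathcal W_p(L_n[\mathbf Y_\sigma],\P_{Y_\sigma})$, and the terminal term $|\xi^{i,n}-\xi^i|\ind_{\{\tau\wedge\sigma=T\}}$. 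The inequality $|\sum_{k=1}^7 a_k|^p\le 7^{p-1}\sum_k|a_k|^p$ then produces precisely the constant $7^{p-1}$ of \eqref{smallnessCond-chaos-1}, while \eqref{ineq-wass-1} turns $\mathcal W_p^p(L_n[\mathbf Y^n_\rho],L_n[\mathbf Y_\rho])$ into the self-referential average $\frac1n\sum_{j=1}^n|Y^{j,n}_\rho-Y^j_\rho|^p$.

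Taking expectations and using exchangeability to replace $\frac1n\sum_j\E[|Y^{j,n}_\rho-Y^j_\rho|^p]$ by $\E[|Y^{i,n}_\rho-Y^i_\rho|^p]$, the four self-referential barrier terms contribute $\theta\,\psi_n(t)$ with $\theta:=2^{\frac p2-1}7^{p-1}(\gamma_1^p+\gamma_2^p+\kappa_1^p+\kappa_2^p)$, and by \eqref{smallnessCond-chaos-1} we have $\theta<1$. The two LLN terms are controlled by $e^{p\beta T}\,\E[\sup_{0\le r\le T}\mathcal W_p^p(L_n[\mathbf Y_r],\P_{Y_r})]$, which vanishes by Theorem \ref{LLN-2}, and the terminal term by $e^{p\beta T}\sup_i\E[|\xi^{i,n}-\xi^i|^p]\to0$ by Assumption \ref{Assump:chaos}(ii). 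Splitting the driver Wasserstein in the same way, its self-referential part is $\le c\int_t^T\psi_n(s)\,ds$ and its LLN part again vanishes through Theorem \ref{LLN-2}. Collecting everything gives
$$\psi_n(t)\le\theta\,\psi_n(t)+c\int_t^T\psi_n(s)\,ds+C\,\varepsilon_n,\qquad \varepsilon_n:=\E\Big[\sup_{0\le r\le T}\mathcal W_p^p(L_n[\mathbf Y_r],\P_{Y_r})\Big]+\sup_i\E[|\xi^{i,n}-\xi^i|^p].$$
Absorbing $\theta\,\psi_n(t)$ and applying Grönwall's lemma backward in $t$ yields $\psi_n(0)\le\frac{C\varepsilon_n}{1-\theta}\,e^{cT/(1-\theta)}\to0$, whence $\sup_{0\le t\le T}\E[|Y^{i,n}_t-Y^i_t|^p]\le\psi_n(0)\to0$, which is \eqref{chaos-Y-1-1}; statement \eqref{chaos-Y-1-2} then follows from $\|Y^{i,n}-Y^i\|_{\mathcal H^{p,1}}^p\le T^{p/2}\sup_{0\le t\le T}\E[|Y^{i,n}_t-Y^i_t|^p]$ by Jensen's inequality. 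The main obstacle is closing the self-referential mean-field loop: one must recognise that $\mathcal W_p(L_n[\mathbf Y^n],\P_Y)$ separates into a self-referential contribution -- absorbed only because exchangeability collapses $\frac1n\sum_j$ to a single index and $\theta<1$ under \eqref{smallnessCond-chaos-1} -- and an independent contribution that decouples from the unknown error and is handled by Theorem \ref{LLN-2}. Working with the stopping-time-indexed quantity $\psi_n(t)$ over $[t,T]$, rather than a fixed-time supremum, is what lets the driver term enter as a Grönwall integral and removes any need for a smallness condition on $T$.
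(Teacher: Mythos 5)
Your proposal is correct and follows essentially the same route as the paper: both apply the a priori estimate of Proposition A.1 to the difference of the two Dynkin-game representations, split the empirical Wasserstein distance by the triangle inequality into a self-referential part (collapsed to a single index via the exchangeability of Proposition \ref{exchange} and absorbed using $\theta<1$ from \eqref{smallnessCond-chaos-1}, with the seven-term decomposition producing the constant $7^{p-1}$) and an independent part handled by Theorem \ref{LLN-2}, and then close with Gr\"onwall on a stopping-time-indexed error. The only cosmetic difference is that the paper works with the averaged quantity $V^{n,p}_t=\frac1n\sum_j e^{p\beta t}|Y^{j,n}_t-Y^j_t|^p$ and takes the supremum over stopping times afterwards, whereas you define the supremum $\psi_n(t)$ directly; these are interchangeable given exchangeability.
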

\begin{proof}
Given $t\in[0,T]$, let $\vartheta \in \mathcal{T}^n_t$. By the estimates on BSDEs in Proposition \ref{estimates} (see the appendix below), we have
\begin{equation}\label{Y-n-estimate}\begin{array} {lll}
|Y^{i,n}_\vartheta-Y^{i}_\vartheta|^{p} \\
\le \underset{\tau\in\mathcal{T}^n_\vartheta}{\esssup\,}\underset{\sigma \in\mathcal{T}^n_\vartheta}{\esssup\,}\left|\mathcal{E}_{\vartheta,\tau \wedge \sigma}^{\textbf{f}^i \circ \textbf{Y}^{n}}[h_1(\tau,Y^{i,n}_\tau,L_{n}[\textbf{Y}^n_\tau])\ind_{\{\tau \leq  \sigma<T\}}+h_2(\sigma,Y^{i,n}_\sigma,L_{n}[\textbf{Y}^n_\sigma])\ind_{\{\sigma < \tau\}}+\xi^{i,n}\ind_{\{\tau \wedge \sigma=T\}}] \right. \\ \left. \qquad\qquad -\mathcal{E}_{\vartheta,\tau \wedge \sigma}^{\textbf{f}^i \circ Y^i }[h_1(\tau,Y^i_\tau,\P_{Y_s|s=\tau})\ind_{\{\tau \le \sigma<T\}}+h_2(\sigma,Y^i_\sigma,\P_{Y_s|s=\sigma})\ind_{\{\sigma < \tau\}}+\xi^i \ind_{\{\tau \wedge \sigma=T\}}] \right|^{p} \\
\le \underset{\tau\in\mathcal{T}^n_\vartheta}{\esssup\,} \underset{\sigma \in\mathcal{T}^n_\vartheta}{\esssup\,}2^{\frac{p}{2}-1}\E\left[\eta^p(\int_\vartheta^{\tau \wedge \sigma} e^{2 \beta (s-\vartheta)} |(\textbf{f}^i \circ \textbf{Y}^{n}) (s,\widehat{Y}^{i,\tau, \sigma}_s,\widehat{Z}^{i,\tau, \sigma}_s,\widehat{U}^{i,\tau, \sigma}_s) \right. \\ \left. \qquad\quad -(\textbf{f}^i \circ Y^i) (s,\widehat{Y}^{i,\tau, \sigma}_s, \widehat{Z}^{i,\tau, \sigma}_s,\widehat{U}^{i,\tau, \sigma}_s)|^2 ds)^{\frac{p}{2}} \right. \\ \left. \qquad\qquad +\left(e^{ \beta(\tau \wedge \sigma-\vartheta)}\left[|h_1(\tau,Y^{i,n}_\tau,L_{n}[\textbf{Y}^n_\tau])-h_1(\tau,Y^i_\tau, \P_{Y_s|s=\tau})|\ind_{\{\tau \leq  \sigma<T\}} \right. \right. \right. \nonumber\\ \left. \left. \left. \qquad\qquad\qquad + |h_2(\sigma,Y^{i,n}_\sigma,L_{n}[\textbf{Y}^n_\sigma])-h_2(\sigma,Y^i_\sigma, \P_{Y_s|s=\sigma})|\ind_{\{\sigma < \tau\}}\right] + e^{\beta(T-\vartheta)}|\xi^{i,n}-\xi^i|\ind_{\{\tau \wedge \sigma=T\}}\right)^p
|\mathcal{F}_\vartheta\right]\\
\le \underset{\tau\in\mathcal{T}^n_\vartheta}{\esssup\,} \underset{\sigma \in\mathcal{T}^n_\vartheta}{\esssup\,} 2^{\frac{p}{2}-1}\E\left[\int_\vartheta^{\tau \wedge \sigma}T^{\frac{p-2}{2}} e^{p \beta (s-\vartheta)}\eta^p C_f^{p}\mathcal{W}^p_p(L_{n}[\textbf{Y}_s^n],\P_{Y_s})ds +\left( \gamma_1 e^{ \beta(\tau-\vartheta)}|Y^{i,n}_{\tau}-Y^{i}_{\tau}| \right. \right. \\ \left.\left. \qquad\qquad\qquad +\gamma_2 e^{ \beta(\tau-\vartheta)} \mathcal{W}_p(L_{n}[\textbf{Y}_{\tau}^n],\P_{Y_s|s=\tau})+\kappa_1 e^{ \beta(\sigma-\vartheta)}|Y^{i,n}_{\sigma}-Y^{i}_{\sigma}| \right. \right. \\ \left.\left. \qquad\qquad\qquad\qquad\qquad +\kappa_2 e^{ \beta(\sigma-\vartheta)} \mathcal{W}_p(L_{n}[\textbf{Y}_{\sigma}^n],\P_{Y_s|s=\sigma})  +e^{ \beta(T-\vartheta)}|\xi^{i,n}-\xi^i|\ind_{\{\tau \wedge \sigma=T\}}
\right)^p|\mathcal{F}_\vartheta\right],
 \end{array}
\end{equation}
with $\eta$, $\beta>0$ such that  $\eta \leq \frac{1}{C_f^2}$ and  $\beta \geq 2 C_f+\frac{3}{\eta}$, where $(\widehat{Y}^{i,\tau,\sigma},\widehat{Z}^{i,\tau, \sigma},\widehat{U}^{i,\tau, \sigma})$ is the solution of the BSDE associated with driver $\textbf{f}^{i}\circ \Ybf^n$, terminal time $\tau \wedge \sigma$ and terminal condition $h_1(\tau,Y^{i,n}_\tau,L_{n}[\Ybf^n_s]_{s=\tau})\ind_{\{\tau \leq  \sigma<T\}}+h_2(\sigma,Y^{i,n}_\sigma,L_{n}[\Ybf^n_s]_{s=\sigma})\ind_{\{\sigma < \tau\}}+\xi^{i,n}\ind_{\{\tau \wedge \sigma=T\}}$.

Therefore, we have
\begin{equation*}
e^{p\beta \vartheta}|Y^{i,n}_\vartheta-Y^{i}_\vartheta|^{p}\le \underset{\tau\in\mathcal{T}^n_\vartheta}{\esssup\,}\E[G^{i,n}_{\vartheta,\tau}|\mathcal{F}_\vartheta]+\underset{\tau\in\mathcal{T}^n_\vartheta}{\esssup\,}\E[H^{i,n}_{\vartheta,\tau}|\mathcal{F}_\vartheta],
\end{equation*}
where
\begin{equation*}\begin{array} {lll}
G^{i,n}_{\vartheta,\tau}:=2^{\frac{p}{2}-1} \left\{\int_\vartheta^{\tau}T^{\frac{p-2}{2}}2^{p-1}\eta^p C_f^{p} \left(\frac{1}{n}\sum_{j=1}^ne^{p \beta s}|Y^{j,n}_s-Y^j_s|^p\right) ds \right.
\\ \left. \qquad\qquad\qquad +\textcolor{black}{7^{p-1}}(\gamma_1^p+\gamma_2^p) \left(e^{p\beta\tau}|Y^{i,n}_{\tau}-Y^{i}_{\tau}|^p+
\frac{1}{n}\sum_{j=1}^ne^{p\beta\tau}|Y^{j,n}_{\tau}-Y^j_{\tau}|^p\right)  \right.
\\ \left. \qquad\qquad\qquad + \left(\textcolor{black}{7^{p-1}}\gamma_2^p+2^{p-1}T\eta^p T^{\frac{p-2}{2}} C_f^{p}\right)\underset{0\le t\le T}{\sup}\, e^{p \beta s}\mathcal{W}^p_p(L_{n}[\textbf{Y}_s],\P_{Y_s})+ \textcolor{black}{7^{p-1}}e^{\beta T}|\xi^{i,n}-\xi^i|^p\ind_{\{\tau=T\}}. \right\}
\end{array}
\end{equation*}
and
\begin{equation*}\begin{array} {lll}
H^{i,n}_{\vartheta,\tau}:=2^{\frac{p}{2}-1} \textcolor{black}{7^{p-1}}(\kappa_1^p+\kappa_2^p) \left(e^{p\beta\tau}|Y^{i,n}_{\tau}-Y^{i}_{\tau}|^p+
\frac{1}{n}\sum_{j=1}^ne^{p\beta\tau}|Y^{j,n}_{\tau}-Y^j_{\tau}|^p\right) 
\\  \qquad\qquad\qquad + 2^{\frac{p}{2}-1} \textcolor{black}{7^{p-1}}\kappa_2^p \underset{0\le t\le T}{\sup}\, e^{p \beta s}\mathcal{W}^p_p(L_{n}[\textbf{Y}_s],\P_{Y_s}).
\end{array}
\end{equation*}
Setting
$V^{n,p}_t:=\frac{1}{n}\sum_{j=1}^ne^{p \beta t}|Y^{j,n}_t-Y^j_t|^p$
and 
$$
\Gamma_{n,p}:=2^{\frac{p}{2}-1} \left(\textcolor{black}{7^{p-1}}\gamma_2^p+\textcolor{black}{7^{p-1}}\kappa_2^p+2^{p-1}T\eta^p T^{\frac{p-2}{2}} C_f^{p}\right)\underset{0\le t\le T}{\sup}\, e^{p \beta s}\mathcal{W}^p_p(L_{n}[\textbf{Y}_s],\P_{Y_s})+ 2^{\frac{p}{2}-1} \textcolor{black}{7^{p-1}}e^{\beta T}|\xi^{i,n}-\xi^i|^p.
$$
we obtain 
\begin{align}\label{V-n-p}
V^{n,p}_\vartheta &\le \underset{\tau\in\mathcal{T}^n_\vartheta}{\esssup\,}\E[\int_\vartheta^{\tau} 2^{\frac{p}{2}-1} 2^{p-1}T^{\frac{p-2}{2}}\eta^p C_f^{p} V^{n,p}_sds+2^{\frac{p}{2}-1} \textcolor{black}{7^{p-1}}(\gamma_1^p+\gamma_2^p)V^{n,p}_{\tau}+\Gamma_{n,p}|\mathcal{F}_\vartheta] \nonumber \\ &+\underset{\tau\in\mathcal{T}^n_\vartheta}{\esssup\,}\E[2^{\frac{p}{2}-1} \textcolor{black}{7^{p-1}}(\kappa_1^p+\kappa_2^p)V^{n,p}_{\tau}|\mathcal{F}_\vartheta].
\end{align}
Therefore, we get 
$$
\E[V^{n,p}_\vartheta]\le 2^{\frac{p}{2}-1} \textcolor{black}{7^{p-1}}(\gamma_1^p+\gamma_2^p+\kappa_1^p+\kappa_2^p)\underset{\tau\in\mathcal{T}^n_\vartheta}{\sup}\,\E[V^{n,p}_{\tau}]+\E[\int_\vartheta^T 2^{p-1}2^{\frac{p}{2}-1} T^{\frac{p-2}{2}}\eta^p C_f^{p} V^{n,p}_sds+\Gamma_{n,p}].
$$
Since $\mathcal{T}^n_\vartheta \subset \mathcal{T}^n_t$ and by arbitrariness of $\vartheta \in \mathcal{T}^n_t$, we obtain
$$
\lambda \E[V^{n,p}_{t}] \leq \lambda \underset{ \vartheta \in \mathcal{T}_t^n}{\sup}\,\E[V^{n,p}_{\vartheta}]\le \E[\int_t^T 2^{p-1} 2^{\frac{p}{2}-1} \eta^p T^{\frac{p-2}{2}} C_f^{p} V^{n,p}_sds+\Gamma_{n,p}],
$$
where $\lambda:=1-2^{\frac{p}{2}-1} \textcolor{black}{7^{p-1}}(\gamma_1^p+\gamma_2^p+\kappa_1^p+\kappa_2^p) >0$ by the assumption \eqref{smallnessCond-chaos-1}. By \textcolor{black}{Gronwall's inequality}, we have
$$
\underset{0\le t\le T}{\sup}\,\E[V^{n,p}_{t}]\le \frac{ e^{K_p}}{\lambda}\E[\Gamma_{n,p}]
$$
where $K_p:=\frac{1}{\lambda}2^{p-1}2^{\frac{p}{2}-1} T^{\frac{p-2}{2}}\eta^p C_f^{p}T$. But, in view of the exchangeability of the processes $(Y^{i,n},Y^i),i=1,\ldots,n$ (see Proposition \ref{exchange}), we have, $\E[V^{n,p}_{t}]=\E[e^{p\beta t}|Y^{i,n}_t-Y^i_t|^p]$. Thus,
$$
\underset{0\le t\le T}{\sup}\,\E[e^{p\beta t}|Y^{i,n}_t-Y^i_t|^p]\le \frac{ e^{K_p}}{\lambda}\E[\Gamma_{n,p}] \to 0
$$
as $n\to\infty$,
in view of Theorem \ref{LLN-2} and Assumption \ref{Assump:chaos}, as required.  \qed
\end{proof}

We now derive the following propagation of chaos result.

\begin{Theorem}[Propagation of chaos of the $Y^{i,n}$'s]\label{prop-y} 
Under the assumptions of Proposition \ref{chaos-Y-1}, the solution $Y^{i,n}$ of the particle system \eqref{BSDEParticle} satisfies the propagation of chaos property, i.e. for any fixed positive integer $k$, 
$$
\underset{n\to\infty}{\lim} \text{Law}\,(Y^{1,n},Y^{2,n},\ldots,Y^{k,n})=\text{Law}\,(Y^1,Y^2,\ldots,Y^k).
$$
\end{Theorem}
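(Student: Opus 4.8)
The plan is to deduce the joint convergence in law as a soft consequence of the componentwise $L^p$-estimate of Proposition \ref{chaos-Y-1}, exploiting that the limiting processes $Y^1,\dots,Y^k$ and the particles $Y^{1,n},\dots,Y^{k,n}$ are all defined on the \emph{same} probability space $(\Omega,\mathcal{F},\mathbb{F}^n,\P)$ carrying the common driving noises $(B^i,\tilde N^i)$. First I would identify the candidate limit: since $Y^1,\dots,Y^k$ are independent copies of $Y$, each solving \eqref{BSDEParticle-Theta} with its own independent noise, their joint law is the product measure, so that
\begin{equation*}
\text{Law}\,(Y^1,\dots,Y^k)=\P_Y^{\otimes k}.
\end{equation*}

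Next I would upgrade the marginal convergence to a joint one. By \eqref{chaos-Y-1-2} in Proposition \ref{chaos-Y-1}, for each fixed $i\le k$ we have $\|Y^{i,n}-Y^i\|_{\mathcal{H}^{p,1}}\to 0$, which, since $\|Y^{i,n}-Y^i\|_{\mathcal{H}^{p,1}}^p=\E\big[\|Y^{i,n}-Y^i\|_{\mathbb{H}^{2,1}}^p\big]$, means precisely that $Y^{i,n}\to Y^i$ in $L^p(\Omega;\mathbb{H}^{2,1})$. Summing these finitely many convergences, the $(\mathbb{H}^{2,1})^k$-valued vector $(Y^{1,n},\dots,Y^{k,n})$ converges to $(Y^1,\dots,Y^k)$ in $L^p\big(\Omega;(\mathbb{H}^{2,1})^k\big)$, hence in probability on the Polish space $(\mathbb{H}^{2,1})^k$. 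The final step invokes the classical fact that convergence in probability on a Polish space entails convergence in law: for any bounded continuous $\Phi$ the variables $\Phi(Y^{1,n},\dots,Y^{k,n})$ converge in probability to $\Phi(Y^1,\dots,Y^k)$ and are uniformly bounded, so dominated convergence yields convergence of expectations, i.e. $\text{Law}\,(Y^{1,n},\dots,Y^{k,n})\to\text{Law}\,(Y^1,\dots,Y^k)=\P_Y^{\otimes k}$, which is the asserted propagation of chaos.

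The main obstacle is not analytic — the hard estimate has already been carried out via the Gronwall argument of Proposition \ref{chaos-Y-1} — but rather a matter of selecting the topology that the estimate actually delivers, and verifying it is adequate. Proposition \ref{chaos-Y-1} provides convergence in the time-integrated norm $\mathcal{H}^{p,1}$ (equivalently, on $L^2([0,T])$) together with the pointwise-in-time estimate \eqref{chaos-Y-1-1}, but \emph{not} convergence in $\mathcal{S}^p$ nor in the Skorokhod $J_1$ topology on $\mathbb{D}$; accordingly I would state the result as weak convergence of the laws on $(\mathbb{H}^{2,1})^k$. Alternatively, \eqref{chaos-Y-1-1} already yields convergence of all finite-dimensional distributions, since for fixed $t_1,\dots,t_m$ one has $\E\big[\sum_{j}|Y^{i,n}_{t_j}-Y^i_{t_j}|^p\big]\to 0$, giving joint convergence in probability of the time-marginals. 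Should a genuinely path-space ($J_1$) statement be required, one would additionally need tightness in $\mathbb{D}$, which is where Assumption \ref{Assump:chaos}(iii) on the totally inaccessible jumps of $Y$ and the control of jump oscillations (as in the omitted proof of Theorem \ref{LLN-2}) would enter.
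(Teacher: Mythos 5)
Your argument is correct and is essentially the paper's own proof: both deduce the joint convergence in law as a soft consequence of the componentwise estimate \eqref{chaos-Y-1-2} of Proposition \ref{chaos-Y-1}, using the natural coupling provided by the fact that $(Y^{1,n},\dots,Y^{k,n})$ and $(Y^1,\dots,Y^k)$ live on the same probability space, and both correctly identify the relevant path topology as that of $\mathbb{H}^{2,1}=L^2([0,T];\R)$ rather than $\mathcal{S}^p$ or Skorokhod. The only (immaterial) difference is packaging: the paper bounds the Wasserstein distance $D_{\mathbb{H}^2}(P^{k,n},P^{\otimes k})$ by $k\sup_{i\le k}\|Y^{i,n}-Y^i\|^2_{\mathcal{H}^{2,1}}$ and invokes that $D_{\mathbb{H}^2}$ metrizes weak convergence on $\mathcal{P}_2(\mathbb{H}^2)$, whereas you pass through convergence in probability on the Polish space $(\mathbb{H}^{2,1})^k$.
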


\begin{proof}
 Set $P^{k,n}:=\text{law}\,(Y^{1,n},Y^{2,n},\ldots,Y^{k,n})$ and $P^{\otimes k}:=\text{Law}\,(Y^1,Y^2,\ldots,Y^k)$.
Consider the Wasserstein metric on $\mathcal{P}_2(\mathbb{H}^2)$ defined by 
\begin{equation}\label{W-Y} 
D_{\mathbb{H}^2}(P,Q)=\inf\left\{\left(\int_{\mathbb{H}^2\times \mathbb{H}^2}  \|y-y^{\prime}\|_{\mathbb{H}^2}^2 R(dy,dy^{\prime})\right)^{1/2}\right\},
\end{equation}
over $R\in\mathcal{P}(\mathbb{H}^2\times \mathbb{H}^2)$ with marginals $P$ and $Q$. Note that, since $p \geq 2$, it is enough to show for $D_{\mathbb{H}^2}(P,Q)$. Since $\mathbb{H}^2$ is a Polish space, $(\mathcal{P}_2(\mathbb{H}^2), D_{\mathbb{H}^2})$ is a Polish space and induces the topology of weak convergence. Thus, we obtain the propagation of chaos property for the $Y^{i,n}$'s if we can show that 
$\underset{n\to\infty}{\lim}D_{\mathbb{H}^2}(P^{k,n},P^{\otimes k})=0$. But, this follows from the fact that 
$$
D^2_{\mathbb{H}^2}(P^{k,n},P^{\otimes k})\le k\sup_{i\le k}\|Y^{i,n}-Y^i\|_{\mathcal{H}^{2,1}}^2
$$
and \eqref{chaos-Y-1-2} for $p=2$. \qed

\end{proof}

%%%%%%%%%%%%%%%%%%%%%%%%%%%%%%%%%%%%%%%%%
\medskip

\textcolor{black}{In the next proposition we show a convergence result for $p>2$, of the whole solution $(Y^{i,n},Z^{i,n},U^{i,n},W^{i,n})$ of the system \eqref{BSDEParticle}}.

\begin{Proposition}\label{chaos-1} Assume that, for some $p>2$, Assume that  $\gamma_1$, $\gamma_2$, $\kappa_1$ and $\kappa_2$ satisfy 
\begin{align}\label{smallnessCond-chaos}
  2^{9p/2-3}(\gamma_1^p+\gamma_2^p+\kappa_1^p+\kappa_2^p)<\left(\frac{p-\kappa}{2p}\right)^{p/\kappa}
\end{align}
for some $\kappa\in [2,p)$.

Then, under Assumptions \ref{generalAssump}, \ref{Assump-PS} and \ref{Assump:chaos}, we have
\begin{equation}\label{chaos-1-1}\begin{array}{ll}
\underset{n\to\infty}\lim \,\left(\|Y^{i,n}-Y^i\|_{\sp}+\|Z^{i,n}-Z^i{\bf e}_i\|_{\mathcal{H}^{p,n}}+ \|U^{i,n}-U^i{\bf e}_i\|_{\mathcal{H}^{p,n}_\nu}+ \|W^{i,n}-W^i\|_{\sp}\right)=0.
\end{array}
\end{equation}
Here, ${\bf e}_1,\ldots,{\bf e}_n$ denote unit vectors in $\R^n$. 
\end{Proposition}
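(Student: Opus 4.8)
The plan is to compare the two systems of doubly reflected BSDEs \eqref{BSDEParticle} and \eqref{BSDEParticle-Theta} componentwise and to apply the $L^p$ a priori estimates for doubly reflected BSDEs (Proposition \ref{estimates} in the appendix) to the difference $(Y^{i,n}-Y^i,\,Z^{i,n}-Z^i\mathbf{e}_i,\,U^{i,n}-U^i\mathbf{e}_i,\,W^{i,n}-W^i)$. Both quintuples solve doubly reflected BSDEs --- the first with driver $f(s,\cdot,L_n[\textbf{Y}^n_s])$, barriers $h_1(s,\cdot,L_n[\textbf{Y}^n_s])$, $h_2(s,\cdot,L_n[\textbf{Y}^n_s])$ and terminal value $\xi^{i,n}$, the second with driver $f(s,\cdot,\P_{Y_s})$, barriers $h_1(s,\cdot,\P_{Y_s})$, $h_2(s,\cdot,\P_{Y_s})$ and terminal value $\xi^i$ --- so the stability estimate bounds the left-hand side of \eqref{chaos-1-1} by a constant times the sum of: the terminal error $\E[|\xi^{i,n}-\xi^i|^p]$; the driver discrepancy $\E[(\int_0^T |f(s,Y^i_s,Z^i_s,U^i_s,L_n[\textbf{Y}^n_s])-f(s,Y^i_s,Z^i_s,U^i_s,\P_{Y_s})|\,ds)^p]$; and the two barrier discrepancies. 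Because $p>2$, this estimate is obtained via the Burkholder--Davis--Gundy inequality together with an auxiliary exponent $\kappa\in[2,p)$ and Doob/Jensen applied with power $p/\kappa$, which is precisely the origin of the factor $\left(\tfrac{p-\kappa}{2p}\right)^{p/\kappa}$ appearing in \eqref{smallnessCond-chaos}.

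The next step is to estimate the driver and barrier discrepancies using the Lipschitz continuity of $f$, $h_1$ and $h_2$ in the measure argument. Assumption \ref{generalAssump}(i)(b) gives the driver bound $C_f\,\mathcal{W}_p(L_n[\textbf{Y}^n_s],\P_{Y_s})$, while the $h_1$ and $h_2$ terms are controlled by $\gamma_1|Y^{i,n}_s-Y^i_s|+\gamma_2\mathcal{W}_p(L_n[\textbf{Y}^n_s],\P_{Y_s})$ and $\kappa_1|Y^{i,n}_s-Y^i_s|+\kappa_2\mathcal{W}_p(L_n[\textbf{Y}^n_s],\P_{Y_s})$. I would then split the Wasserstein term via the triangle inequality $\mathcal{W}_p(L_n[\textbf{Y}^n_s],\P_{Y_s})\le \mathcal{W}_p(L_n[\textbf{Y}^n_s],L_n[\textbf{Y}_s])+\mathcal{W}_p(L_n[\textbf{Y}_s],\P_{Y_s})$, where $\textbf{Y}_s=(Y^1_s,\dots,Y^n_s)$ is the vector of i.i.d.\ copies. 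By \eqref{ineq-wass-1} the first term is dominated by $\big(\tfrac1n\sum_{j=1}^n|Y^{j,n}_s-Y^j_s|^p\big)^{1/p}$, while the second is handled by the law of large numbers, $\E[\sup_s\mathcal{W}_p^p(L_n[\textbf{Y}_s],\P_{Y_s})]\to 0$, from Theorem \ref{LLN-2}.

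Collecting these bounds, and writing $A_n:=\|Y^{i,n}-Y^i\|_{\sp}^p+\|Z^{i,n}-Z^i\mathbf{e}_i\|_{\mathcal{H}^{p,n}}^p+\|U^{i,n}-U^i\mathbf{e}_i\|_{\mathcal{H}^{p,n}_\nu}^p+\|W^{i,n}-W^i\|_{\sp}^p$, one arrives at an inequality of the form
$$A_n \;\le\; 2^{9p/2-3}\Big(\tfrac{2p}{p-\kappa}\Big)^{p/\kappa}(\gamma_1^p+\gamma_2^p+\kappa_1^p+\kappa_2^p)\,\Big\|\tfrac1n\textstyle\sum_{j}|Y^{j,n}-Y^j|\Big\|_{\sp}^p+\varepsilon_n,$$
where $\varepsilon_n\to 0$ gathers the terminal error and the Theorem \ref{LLN-2} contribution. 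This is where exchangeability enters: by Proposition \ref{exchange} the processes $(Y^{i,n},Y^i)$, $i=1,\dots,n$, are exchangeable, so $\E[\tfrac1n\sum_j\sup_t|Y^{j,n}_t-Y^j_t|^p]=\E[\sup_t|Y^{i,n}_t-Y^i_t|^p]=\|Y^{i,n}-Y^i\|_{\sp}^p\le A_n$. This turns the bound into a self-referential inequality $A_n\le c\,A_n+\varepsilon_n$ with $c<1$ guaranteed exactly by \eqref{smallnessCond-chaos}, so absorbing $c\,A_n$ on the left yields $A_n\le(1-c)^{-1}\varepsilon_n\to 0$, which is \eqref{chaos-1-1}.

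The main obstacle I anticipate is twofold. First, one must secure a genuine $\mathcal{S}^p$ (pathwise-supremum) a priori estimate for doubly reflected BSDEs that simultaneously controls the martingale integrands $Z,U$ and the difference of reflection terms $W=K^1-K^2$; the individual increasing processes $K^1,K^2$ cannot be compared directly, only their combination $W$ (recoverable from $Y,Z,U$ and the driver through the equation), which is exactly why \eqref{chaos-1-1} is phrased in terms of $W^{i,n}-W^i$. Second, the bookkeeping of constants through BDG and the $\kappa$-H\"older step must be tight enough to land on precisely the threshold in \eqref{smallnessCond-chaos}; it is this constant tracking, rather than any conceptual difficulty, that makes the absorption argument delicate.
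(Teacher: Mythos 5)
Your overall architecture --- split $\mathcal{W}_p(L_n[\textbf{Y}^n_s],\P_{Y_s})$ by the triangle inequality, control the first piece by $\frac1n\sum_j|Y^{j,n}_s-Y^j_s|^p$ via \eqref{ineq-wass-1}, kill the second piece with Theorem \ref{LLN-2}, use exchangeability (Proposition \ref{exchange}) to turn the empirical average into $\|Y^{i,n}-Y^i\|^p_{\mathcal{S}^p}$, and absorb via the smallness condition --- is exactly the paper's strategy. The gap is in the step you yourself flag as the ``main obstacle'': you want a continuous-dependence estimate for \emph{doubly reflected} BSDEs that is \emph{Lipschitz} in the barrier discrepancy, and no such estimate is available (Proposition \ref{estimates} in the appendix is an a priori estimate for plain, non-reflected BSDEs). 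The standard stability argument for (doubly) reflected BSDEs runs It\^o's formula on $|\delta Y|^2$ and handles the reflection terms through the Skorokhod conditions, producing a term of the form $\E[\sup_s|\delta h(s)|\,(K^{1}_T+\bar K^{1}_T)]\le \|\sup_s|\delta h(s)|\|_{L^2}\,\|K^{1}_T+\bar K^{1}_T\|_{L^2}$; after taking square roots this gives $\|\delta Y\|_{\mathcal{S}^p}\lesssim \|\delta h\|_{\mathcal{S}^p}^{1/2}\,C^{1/2}+\dots$, where $C$ bounds the total variation of the reflection processes and is \emph{not} small. Plugging in $\|\delta h\|\le\gamma_1\|\delta Y\|+\dots$ then yields a quadratic self-referential inequality of the type $\|\delta Y\|^2\le a\|\delta Y\|+\varepsilon_n$ with $a=\gamma_1 C+\dots$, whose solution tends to $a>0$ rather than to $0$. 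So the linear absorption $A_n\le cA_n+\varepsilon_n$ with $c<1$ cannot be extracted this way, and the constant in \eqref{smallnessCond-chaos} cannot be reached.

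The paper avoids this by never comparing the two doubly reflected BSDEs directly at the level of $Y$. Instead it uses the Dynkin-game representations of $Y^{i,n}$ and $Y^i$ from Theorems \ref{Dynkin2} and \ref{Dynkin}: both are written as $\esssup\,\essinf$ over stopping times of conditional $f$-expectations whose \emph{terminal payoffs} contain $h_1$ and $h_2$ evaluated at stopping times. The barrier discrepancy then enters only through the terminal condition of an auxiliary non-reflected BSDE, where Proposition \ref{estimates} gives a genuinely Lipschitz bound with explicit constants; raising to the power $p/\kappa$, applying Doob's maximal inequality (which is where $(\tfrac{p}{p-\kappa})^{p/\kappa}$ comes from), invoking exchangeability and Gronwall closes the linear inequality under \eqref{smallnessCond-chaos}. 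Only \emph{after} $\|Y^{i,n}-Y^i\|_{\mathcal{S}^p}\to0$ is established does the paper turn to $Z$, $U$ and $W$, via It\^o's formula on $|\delta Y^{i,n}|^2$, BDG, and the uniform $L^p$ bounds on $K^{1,i,n},K^{2,i,n}$ from Proposition \ref{KK}; at that stage the square-root loss against $K_T$ is harmless because the factor $\|\delta Y^{i,n}\|_{\mathcal{S}^p}$ multiplying it is already known to vanish. You should restructure your argument along these lines: game representation first for $Y$, then the It\^o/BDG step for the remaining components.
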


\begin{proof} 
\noindent \uline{Step 1.} In view of \eqref{Y-n-estimate}, for any $\kappa \ge 2$ and any $t\leq T$, we have
\begin{equation*}\begin{array} {lll}
|Y^{i,n}_t-Y^{i}_t|^{\kappa} 
\le \underset{\tau\in\mathcal{T}^n_t}{\esssup\,} \underset{\sigma\in\mathcal{T}^n_t}{\esssup\,}2^{\frac{\kappa}{2}-1}\E\left[\int_t^{\tau \wedge \sigma}e^{\kappa \beta (s-t)}\eta^{\kappa} T^{\frac{\kappa-2}{\kappa}}C_f^{\kappa}\mathcal{W}^{\kappa}_{p}(L_{n}[\textbf{Y}_s^n],\P_{Y_s})ds \right. \\ + \left. \left( 
\gamma_1 e^{ \beta(\tau-t)}|Y^{i,n}_{\tau}-Y^{i}_{\tau}|+\gamma_2 e^{ \beta(\tau-t)} \mathcal{W}_{p}(L_{n}[\textbf{Y}_{\tau}^n],\P_{Y_s|s=\tau}) \right. \right. \\ \left. \left. +\kappa_1 e^{ \beta(\sigma-t)}|Y^{i,n}_{\sigma}-Y^{i}_{\sigma}|+\kappa_2 e^{ \beta(\sigma-t)} \mathcal{W}_{p}(L_{n}[\textbf{Y}_{\sigma}^n],\P_{Y_s|s=\sigma})  +e^{ \beta(T-t)}|\xi^{i,n}-\xi^i|\ind_{\{\tau \wedge \sigma=T\}}
\right)^{\kappa}|\mathcal{F}_t\right],
 \end{array}
\end{equation*}
where $\eta$, $\beta>0$ such that  $\eta \leq \frac{1}{C_f^2}$ and  $\beta \geq 2 C_f+\frac{3}{\eta}$.

Therefore, for any $p>\kappa \ge 2$, we have
\begin{equation*}
e^{p\beta t}|Y^{i,n}_t-Y^{i}_t|^{p}\le \E[\mathcal{G}^{i,n}_{T}|\mathcal{F}_t]^{p/\kappa},
\end{equation*}
where
$$
\begin{array}{lll}
\mathcal{G}_T^{i,n}:=2^{\frac{\kappa}{2}-1} \left\{\int_0^TT^{\frac{\kappa-2}{\kappa}}e^{\kappa \beta s}\eta^{\kappa} C_f^{\kappa}\mathcal{W}^{\kappa}_{p}(L_{n}[\textbf{Y}_s^n],\P_{Y_s})ds \right. \\ \left. \qquad\qquad\qquad +\left( 
(\gamma_1+\kappa_1) \underset{0\le s\le T}{\sup}\,e^{\beta s}|Y^{i,n}_{s}-Y^{i}_{s}| +(\gamma_2+\kappa_2) \underset{0\le s\le T}{\sup}\,e^{ \beta s} \mathcal{W}_{p}(L_{n}[\textbf{Y}_{s}^n],\P_{Y_s}) +e^{ \beta T}|\xi^{i,n}-\xi^i|\right)^{\kappa}\right\}.
\end{array}
$$
Thus, by Doob's inequality, we get
\begin{equation}\label{Y-n-Doob}
\E[\underset{0\le t\le T}{\sup}\,e^{p\beta t}|Y^{i,n}_t-Y^{i}_t|^{p}]\le \left(\frac{p}{p-\kappa}\right)^{p/\kappa}\E\left[\left(\mathcal{G}_T^{i,n}\right)^{p/\kappa}\right].
\end{equation}

Therefore, we have
$$\begin{array}{lll}
2^{1-\frac{p}{\kappa}}\left(\mathcal{G}_T^{i,n}\right)^{p/\kappa}\le C_1\int_0^T  \underset{0\le t\le s}{\sup}\,e^{p\beta t}\mathcal{W}_p^p(L_{n}[\textbf{Y}_{t}^n],L_{n}[\textbf{Y}_{t}])ds \\ \qquad\qquad\qquad\qquad +2^{3p/2-2}\left((\gamma_1+\kappa_1)\underset{0\le t\le T}{\sup}\,e^{\beta t}|Y^{i,n}_t-Y^{i}_t|+(\gamma_2+\kappa_2)\underset{0\le t\le T}{\sup}\,e^{\beta t}\mathcal{W}_{p}(L_{n}[\textbf{Y}_{s}^n],L_{n}[\textbf{Y}_{s}])\right)^p +\Lambda_n
\end{array}
$$
where $C_1:=2^{3p/2-2}T^{2\frac{p}{\kappa}-1}\eta^pC_f^p$ and
$$
\Lambda_n:=C_1 T\underset{0\le s\le T}{\sup}\,e^{ p\beta s} \mathcal{W}_{p}(L_{n}[\textbf{Y}_{s}],\P_{Y_s})+2^{3p/2-2}\left(
(\gamma_2+\kappa_2) \underset{0\le s\le T}{\sup}\,e^{ \beta s} \mathcal{W}_{p}(L_{n}[\textbf{Y}_{s}],\P_{Y_s}) +e^{ \beta T}|\xi^{i,n}-\xi^i|\right)^{p}.
$$
\noindent But, in view of the exchangeability of the processes $(Y^{i,n},Y^i),i=1,\ldots,n$ (see Proposition \ref{exchange}), for each $s \in [0,T]$ we have, 
$$
\E\left[\underset{0\le t\le s}{\sup}e^{\beta p t}\mathcal{W}_p^p(L_n[\textbf{Y}_t^n],L_n[\textbf{Y}_t])\right]\leq \E\left[\underset{0\le t\le s}{\sup}e^{\beta p t}|Y^{i,n}_t-Y^{i}_t|^p\right],\quad i=1,\ldots,n,
$$
and so
$$\begin{array}{lll}
2^{1-\frac{p}{\kappa}}\E[\left(\mathcal{G}_T^{i,n}\right)^{p/\kappa}]\le C_1\E\left[\int_0^T \underset{0\le t\le s}{\sup}\,e^{p\beta t}\mathcal{W}_p^p(L_{n}[\textbf{Y}_{t}^n],L_{n}[\textbf{Y}_{t}])ds\right]+\E\left[\Lambda_n\right]\\ \qquad\qquad\qquad\qquad\qquad\qquad
+2^{9p/2-3}(\gamma_1^p+\gamma_2^p+\kappa_1^p+\kappa_2^p)\E[\underset{0\le t\le T}{\sup}\,e^{p\beta t}|Y^{i,n}_t-Y^{i}_t|^{p}].
\end{array}
$$
Therefore, \eqref{Y-n-Doob} becomes
\begin{equation*}
\begin{array} {lll}
\mu\E\left[\underset{0\le t\le
T}{\sup}e^{p\beta t}|Y^{i,n}_t-Y^{i}_t|^p\right] \le C_1\E\left[\int_0^T \underset{0\le t\le s}{\sup}e^{\beta p t}|Y^{i,n}_t-Y^{i}_t|^pds\right]+\E\left[\Lambda_n\right].
\end{array}
\end{equation*}
where $\mu:=2^{1-\frac{p}{\kappa}}\left(\frac{p}{p-\kappa}\right)^{-p/\kappa}-2^{9p/2-3} (\gamma_1^p+\gamma_2^p+\kappa_1^p+\kappa_2^p)$.
Using the condition \eqref{smallnessCond-chaos}, to see that
$\mu>0$, and Gronwall's inequality, we finally  obtain
\begin{equation*}
\E\left[\underset{0\le t\le
T}{\sup}e^{p\beta t}|Y^{i,n}_t-Y^{i}_t|^p\right] \le e^{\frac{C_1}{\mu}T}\E\left[\Lambda_n\right].
\end{equation*}
Next, by \eqref{GC-2} together with Assumption \eqref{Assump:chaos} (iii) we have
$$
\underset{n\to\infty}\lim\, \E\left[\Lambda_n\right]= 0,
$$
which yields the desired result.

\medskip
\noindent \uline{Step 2.} We now prove that $\underset{n\to\infty}{\lim}\|Z^{i,n}-Z^i{\bf e}_i\|_{\mathcal{H}^{p,n}}=0,    \,\underset{n\to\infty}{\lim}\|U^{i,n}-U^i{\bf e}_i\|_{\mathcal{H}^{p,n}_\nu}=0$ and $\underset{n\to\infty}{\lim}\|W^{i,n}-W^i\|_{\mathcal{S}^{p}}=0$. We start by showing that $\underset{n\to\infty}{\lim}\|Z^{i,n}-Z^i{\bf e}_i\|_{\mathcal{H}^{p,n}}=0$ and    $\underset{n\to\infty}{\lim}\|U^{i,n}-U^i{\bf e}_i\|_{\mathcal{H}^{p,n}_\nu}=0$. For $s \in [0,T]$, denote $\delta Y^{i,n}_s:= Y_s^{i,n}-Y_s^{i}$, $\delta Z_s^{i,n}:= Z_s^{i,n}-Z_s^{i}{\bf e}_i$, $\delta U^{i,n}_s:= U_s^{i,n}-U_s^{i}{\bf e}_i$, $\delta K_s^{1,i,n}:= K_s^{1,i,n}-K_s^{1,i}$, $\delta K_s^{2,i,n}:= K_s^{2,i,n}-K_s^{2,i}$, $\delta f^{i,n}_s:=f(s,Y_s^{i,n}, Z_s^{i,i,n}, U_s^{i,i,n}, L_{n}[\textbf{Y}_s^n])-f(s,Y_s^{i}, Z_s^{i}, U_s^{i}, \mathbb{P}_{Y^{i}_s})$, $\delta \xi^{i,n}:=\xi^{i,n}-\xi^{i}$, $\delta h^{1,i,n}_s:=h^1(s,Y_s^{i,n},L_{n}[\textbf{Y}^n_s])-h^1(s,Y_s^{i}, \mathbb{P}_{Y^{i}_s})$ and $\delta h^{2,i,n}_s:=h^2(s,Y_s^{i,n},L_{n}[\textbf{Y}^n_s])-h^2(s,Y_s^{i}, \mathbb{P}_{Y^{i}_s})$. By applying It\^{o}'s formula to $|\delta Y^{i,n}_t|^2$, we obtain
\begin{align*}
    &|\delta Y^{i,n}_t|^2+\int_t^T |\delta Z^{i,n}_s|^2ds+\int_t^T\int_{\R^*}\sum_{j=1}^n|\delta U^{i,j,n}_s(e)|^2 N^j(ds,de)+\sum_{t<s\leq T}|\Delta W_s^{i,n}-\Delta W_s^{i}|^2=  |\delta \xi^{i,n}|^2 \nonumber \\
    &  + 2\int_t^T \delta Y_s^{i,n}\delta f_s^{i,n}ds - 2 \int_t^T \delta Y_s^{i,n} \sum_{j=1}^n\delta Z_s^{i,j,n} dB^j_s - 2 \int_t^T \int_{\R^*} \delta Y_{s^-}^{i,n} \sum_{j=1}^n\delta U_s^{i,j,n}(e) \Tilde{N}^j(ds,de) \nonumber \\ & +2\int_t^T \delta Y_{s^-}^{i,n} d(\delta K_s^{1,i,n})-2\int_t^T \delta Y_{s^-}^{i,n} d(\delta K_s^{2,i,n}).
\end{align*}
By standard estimates, from the assumptions on the driver $f$, we get, for all $\varepsilon>0$,
\begin{equation*}\begin{array}{lll}
    \int_t^T\delta Y_s^{i,n}\delta f_s^{i,n}ds \leq \int_t^T C_f|\delta Y_s^{i,n}|^2ds+\int_t^T\frac{3}{\varepsilon}C^2_f|\delta Y_s^{i,n}|^2ds+\int_t^T 4\varepsilon \left\{|\delta Z^{i,n}_s|^2+|\delta U^{i,n}_s|^2_\nu\right\}ds \\ \qquad\qquad\qquad\qquad\qquad
    +\int_t^T4\varepsilon\mathcal{W}^2_p(L_{n}[\textbf{Y}^n_s],\mathbb{P}_{Y^{i}_s}) ds.
    \end{array}
\end{equation*}
We obtain that, for some constant $C_p>0$, independent of $n$,  we have
\begin{equation}\begin{array}{lll}\label{eq1}
    |\delta Y^{i,n}_0|^p+\left(\int_0^T |\delta Z^{i,n}_s|^2ds\right)^{\frac{p}{2}}+\left(\int_0^T\int_{\R^*}\sum_{j=1}^n|\delta U^{i,j,n}_s(e)|^2 N^j(ds,de)\right)^{\frac{p}{2}} \leq C_p |\delta \xi^{i,n}|^p  \\ \qquad +C_p \left\{\left(2C_f+\frac{6}{\varepsilon}C_f^2\right)T\right\}^{\frac{p}{2}} \underset{0\le s\le T}{\sup} |\delta Y_s^{i,n}|^p  
    +C_p \varepsilon^{\frac{p}{2}} \left(\int_0^T |\delta Z^{i,n}_s|^2 ds\right)^{\frac{p}{2}}+C_p \varepsilon^{\frac{p}{2}} \left(\int_0^T |\delta U^{i,n}_s|^2_\nu ds\right)^{\frac{p}{2}} \\ \qquad +C_p \left(\int_0^T \mathcal{W}^2_p(L_{n}[\textbf{Y}^n_s],\mathbb{P}_{Y^{i}_s})ds\right)^{\frac{p}{2}} +C_p \left\{ \left|\int_0^T \delta Y_s^{i,n} \sum_{j=1}^n\delta Z_s^{i,j,n} dB^j_s \right|^{\frac{p}{2}} 
    \right. \\  \left. \qquad + \left|\int_0^T \int_{\R^*} \delta Y_{s^-}^{i,n} \sum_{j=1}^n\delta U_s^{i,j,n}(e) \Tilde{N}^j(ds,de) \right|^{\frac{p}{2}}  +\left|\int_0^T \delta Y_{s^-}^{i,n} d(\delta K_s^{1,i,n})\right|^{\frac{p}{2}}+\left|\int_0^T \delta Y_{s^-}^{i,n} d(\delta K_s^{2,i,n})\right|^{\frac{p}{2}}\right\}.
    \end{array}
\end{equation}
By applying the Burkholder-Davis-Gundy inequality, we derive that there exist some constants $m_p>0$ and $l_p>0$ such that
\begin{align*}
C_p \mathbb{E}\left[\left|\int_0^T \delta Y_s^{i,n} \sum_{j=1}^n\delta Z_s^{i,j,n} dB^j_s \right|^{\frac{p}{2}}\right]  & \leq m_p \mathbb{E} \left[\left(\int_0^T (\delta Y_s^{i,n})^2 |\delta Z_s^{i,n}|^2 ds\right)^{\frac{p}{4}}\right] \\ & \leq \frac{m^2_p}{2} \|\delta Y_s^{i,n}\|^p_{\mathcal{S}^p}+\frac{1}{2}\mathbb{E}\left[\left(\int_0^T |\delta Z^{i,n}_s|^2ds\right)^{\frac{p}{2}}\right]
\end{align*}
and
\begin{align*}
C_p \mathbb{E}\left[\left|\int_0^T \int_{\R^*} \delta Y_{s^-}^{i,n} \sum_{j=1}^n \delta U_s^{i,j,n}(e) \Tilde{N}^j(ds,de) \right|^{\frac{p}{2}}\right] \leq l_p \mathbb{E} \left[\left(\int_0^T (\delta Y_{s^-}^{i,n})^2 \int_{\R^*}\sum_{j=1}^n(\delta U_s^{i,j,n}(e))^2 N^j(ds,de)\right)^{\frac{p}{4}}\right] \nonumber \\ \qquad\qquad \leq \frac{l^2_p}{2} \|\delta Y_s^{i,n}\|^p_{\mathcal{S}^p}+\frac{1}{2}\mathbb{E}\left[\left(\int_0^T \int_{\R^*}\sum_{j=1}^n(\delta U^{i,j,n}_s(e))^2 N^j(ds,de)\right)^{\frac{p}{2}}\right].
\end{align*}
Also recall that, for some constant $e_p>0$ we have
\begin{align*}
    \mathbb{E}\left[\left(\int_0^T \int_{\R^*}|\delta U^{i,n}_s(e)|^2\nu(de)ds\right)^{\frac{p}{2}}\right] \leq e_p \mathbb{E}\left[\left(\int_0^T \int_{\R^*} \sum_{j=1}^n |\delta U^{i,j,n}_s(e)|^2N^j(de,ds)\right)^{\frac{p}{2}}\right].
\end{align*}
Now, we take the expectation in $\eqref{eq1}$, by using the above inequalities and taking $\varepsilon>0$ small enough, we obtain
\begin{align}\label{ii1}
    &\mathbb{E}\left[\left(\int_0^T |\delta Z^{i,n}_s|^2ds\right)^{\frac{p}{2}}+\left(\int_0^T \|\delta U^{i,n}_s\|_\nu^2ds\right)^{\frac{p}{2}}\right]  \leq  C_p |\delta \xi^{i,n}|^p \nonumber \\ &+K_{C_f,\varepsilon,T,p} \|\delta Y_s^{i,n}\|^p_{\mathcal{S}^p}+C_p \mathbb{E}\left[\sup_{0 \leq s \leq T} \mathcal{W}^p_p(L_{n}[\textbf{Y}^n_s],\mathbb{P}_{Y^{i}_s})\right]\nonumber \\ &+\mathbb{E}\left[\left(\sup_{0 \leq s \leq T}|\delta Y_s^{i,n}|(K_T^{1,i,n}+K_T^{1,i})\right)^{\frac{p}{2}}\right]+\mathbb{E}\left[\left(\sup_{0 \leq s \leq T}|\delta Y_s^{i,n}|(K_T^{2,i,n}+K_T^{2,i})\right)^{\frac{p}{2}}\right].
\end{align}
From Step 1, we have $\|\delta Y^{i,n}\|^p_{\mathcal{S}^p} \to 0$, which also implies the uniform boundedness of the sequence $\left(\|Y^{i,n}\|^p_{\mathcal{S}^p}\right)_{n \geq 0}$. Furthermore, by Assumption \ref{Assump:chaos} and Proposition \ref{KK}, we obtain that $\mathbb{E}[(K_T^{1,i,n})^p]$ (resp. $\mathbb{E}[(K_T^{2,i,n})^p]$) are uniformly bounded. Taking the limit with respect to $n$ in \eqref{ii1}, and using Theorem \ref{LLN-2}, we get the convergence $\underset{n\to\infty}{\lim}\|Z^{i,n}-Z^i{\bf e}_i\|_{\mathcal{H}^{p,n}}=0,    \,\underset{n\to\infty}{\lim}\|U^{i,n}-U^i{\bf e}_i\|_{\mathcal{H}^{p,n}_\nu}=0$.

From the equations satisfied by $W^{i,n}$ and $W^{i}$, that is
\begin{align}\label{eq2}
    W_T^{i,n}=Y_0^{i,n}-\xi^{i,n}& -\int_0^Tf(s,Y_s^{i,n}, Z_s^{i,i,n}, U_s^{i,i,n}, L_{n}[\textbf{Y}_s^n])ds \nonumber \\ & +\sum_{j=1}^n\int_0^TZ_s^{i,j,n}dB^j_s+\int_0^T\int_{\R^*} \sum_{j=1}^nU_s^{i,j,n}(e)\Tilde{N}^j(ds,de)
\end{align}
and
\begin{align}\label{eq3}
W_T^{i}=Y_0^{i}-\xi^{i}-\int_0^Tf(s,Y_s^{i}, Z_s^{i}, U_s^{i}, \mathbb{P}_{Y_s^{i}})ds+\int_0^TZ_s^{i}dB^i_s+\int_0^T\int_{\R^*} U_s^{i}(e)\Tilde{N}^i(ds,de),
\end{align}
and the convergence of $(Y^{i,n},Z^{i,n}, U^{i,n})$ shown above, we derive that $\underset{n\to\infty}{\lim}\|W^{i,n}-W^i\|_{\mathcal{S}^{p}}=0$.\qed
\end{proof}

\begin{comment}
Due to Assumption \ref{Assump:chaos}, Step 1 and the assumption on the barrier $h_1$, we get that condition $(12)$ from Theorem 3.2 in \cite{CM08} is satisfied, and therefore we obtain
\begin{align}
    &||\delta Y^{n,i}||^{p}_{\mathcal{S}^p}+||\delta Z^{n,i}||^{p}_{\mathcal{H}^p}+||\delta U^{n,i}||^{p}_{\mathcal{H}_\nu^p}+||\delta K^{n,i}||^{p}_{\mathcal{S}^p} \leq C_{C_f}\Phi \left(||\delta \xi^{n,i}||^p_{\mathcal{L}^p}\right. \nonumber \\ & \left. +||f(s,Y_s^{i}, Z_s^{i}, U_s^{i}, L_{n}[\textbf{Y}_s^n])-f(s,Y_s^{i}, Z_s^{i}, U_s^{i}, \mathbb{P}_{Y^{i}_s})||^p_{\mathcal{H}^p}+||h_2(s,Y_s^{i,n},L_{n}[\textbf{Y}_s^n])-h_2(s,Y_s^{i},\mathbb{P}_{Y_s^i}) ||_{\mathcal{S}^p}\right),
\end{align}
where $\Psi$ and $C_{C_f}$ are constants.
\end{comment} 

\medskip

\begin{Corollary}[Propagation of chaos]\label{prop-y-u-z} 
Under the assumptions of Proposition \ref{chaos-1}, the particle system \eqref{BSDEParticle} satisfies the propagation of chaos property, i.e. for any fixed positive integer $k$, 
$$
\underset{n\to\infty}{\lim} \text{Law}\,(\Theta^{1,n},\Theta^{2,n},\ldots,\Theta^{k,n})=\text{Law}\,(\Theta^1,\Theta^2,\ldots,\Theta^k),
$$
where
$$
\Theta^{i,n}:=(Y^{i,n},Z^{i,n},U^{i,n},K^{1,i,n}-K^{2,i,n}),\quad \Theta^{i}:=(Y^i,Z^i,U^i,K^{1,i}-K^{2,i}).
$$
\end{Corollary}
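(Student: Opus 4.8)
The plan is to obtain the corollary as an immediate consequence of the strong-norm convergence established in Proposition \ref{chaos-1}, following the same scheme as the proof of Theorem \ref{prop-y} but now carried out on the full product space $G=\mathbb{D}\times \mathbb{H}^{2,n}\times \mathbb{H}_{\nu}^{2,n}\times \mathbb{D}$ carrying all four coordinates of $\Theta$. First I would recall, from \eqref{Theta} and \eqref{W}, that the fourth component of $\Theta^{i,n}$ (resp.\ $\Theta^i$) is exactly $W^{i,n}=K^{1,i,n}-K^{2,i,n}$ (resp.\ $W^i=K^{1,i}-K^{2,i}$), so that the objects appearing in the statement coincide with those for which Proposition \ref{chaos-1} gives convergence. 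Next I would use that $(G,\delta)$ is a Polish space, hence, as noted after \eqref{W-simultan}, $(\mathcal{P}_p(G),D_G)$ is Polish and $D_G$ metrizes the topology of weak convergence. Since $\P^{k,n}$ and $\P_{\Theta}^{\otimes k}$ from \eqref{P-Theta} are the joint laws of the $k$-tuples $(\Theta^{1,n},\dots,\Theta^{k,n})$ and $(\Theta^1,\dots,\Theta^k)$, proving the asserted weak convergence $\lim_{n\to\infty}\P^{k,n}=\P_{\Theta}^{\otimes k}$ is equivalent to proving $\lim_{n\to\infty}D_G(\P^{k,n},\P_{\Theta}^{\otimes k})=0$.

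The key step is then the coupling estimate \eqref{chaos-0}, which controls $D_G^p(\P^{k,n},\P_{\Theta}^{\otimes k})$ by $k\sup_{i\le k}\big(\|Y^{i,n}-Y^i\|_{\sp}^p+\|Z^{i,n}-Z^i{\bf e}_i\|_{\mathcal{H}^{p,n}}^p+\|U^{i,n}-U^i{\bf e}_i\|_{\mathcal{H}^{p,n}_{\nu}}^p+\|W^{i,n}-W^i\|_{\sp}^p\big)$; this bound comes from inserting the natural coupling of $(\Theta^{1,n},\dots,\Theta^{k,n})$ with the independent copies $(\Theta^1,\dots,\Theta^k)$ as an admissible transport plan in the definition \eqref{W-simultan} of $D_G$. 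Under the smallness condition \eqref{smallnessCond-chaos}, Proposition \ref{chaos-1} ensures that, for each fixed $i$, every one of the four norms on the right-hand side tends to $0$ as $n\to\infty$. Because $k$ is fixed and the supremum runs over the finite set $\{i\le k\}$, the entire right-hand side vanishes in the limit, giving $D_G(\P^{k,n},\P_{\Theta}^{\otimes k})\to 0$ and hence the propagation of chaos.

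I expect the only real subtlety to be structural rather than analytical: the corollary is phrased with the last coordinate of $\Theta$ equal to the single process $W=K^1-K^2$ rather than the pair $(K^1,K^2)$, and it is essential that Proposition \ref{chaos-1} provides convergence of $W^{i,n}$ to $W^i$ in $\sp$, not of the two increasing processes separately (which one should not expect to hold). The statement is thus deliberately formulated so that the available $\sp$-convergence matches exactly the component of $G$, and no extra regularity of $K^{1,i,n}$ or $K^{2,i,n}$ is required. The remaining point to check---that the exchangeable coupling supplied by Proposition \ref{exchange} yields a legitimate element of $\mathcal{P}(G\times G)$ with the prescribed marginals, so that \eqref{chaos-0} is indeed valid---is routine and parallels the argument already used for $Y^{i,n}$ in Theorem \ref{prop-y}.
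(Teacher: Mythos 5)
Your proposal is correct and follows exactly the paper's argument: reduce weak convergence to convergence in the Wasserstein-type metric $D_G$ on the Polish space $(\mathcal{P}_p(G),D_G)$, bound $D_G^p(\P^{k,n},\P_{\Theta}^{\otimes k})$ via the coupling inequality \eqref{chaos-0}, and conclude from the componentwise norm convergence in Proposition \ref{chaos-1}. No gaps; your remarks on the role of $W=K^1-K^2$ as the matching coordinate are consistent with how the paper sets things up.
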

\begin{proof}
We obtain the propagation of chaos if we can show that 
$\underset{n\to\infty}{\lim}D_{G}^p(\P^{k,n},\P_{\Theta}^{\otimes k})=0$. But, this follows from the inequality \eqref{chaos-0} and Proposition \ref{chaos-1}. \qed
\end{proof}
\begin{Remark}
The question of convergence of $S$-saddle points of the particle system to those of the limit process is more elaborate and will be addressed in a forthcoming paper. 
\end{Remark}

%%%%%%%%%%%%%
%%%%%%%%%%%%%%%%%%%%%%%%%%%%%%%%%%%%Appendix%%%%%%%%%%%%%%%%%%

\appendix 
\section{Some technical results} 

For the reader's convenience, we recall here the following $L^p$ estimates with \textit{universal constants} for the difference of the solutions $(Y^{i}, Z^{i}, U^{i})$, $i=1,2$ of BSDEs, established in \cite{ddz21}.

\begin{Proposition}[$L^p$ a priori estimates with \textit{universal constants}]\label{estimates} Let $T>0$. Let $\tau$ be a $\mathbb{F}$-stopping time with values in $[0,T]$. Let $p\ge 2$ and let $\xi_1$ and $\xi_2$ $\in L^p(\mathcal{F}_\tau)$. Let $f_1$ be a Lipschitz driver with constant $C$ and let $f_2$ be a driver. For $i=1,2$, let $(Y^{i},Z^{i}, U^{i})$ be a solution of the BSDE associated to terminal time $\tau$, driver $f^{i}$, and terminal condition $\xi^{i}$. For $s \in [0,\tau]$ denote $\delta Y_s:= Y_s^{1}-Y_s^{2}$, $\delta Z_s:= Z_s^{1}-Z_s^{2}$, $\delta U_s:= U_s^{1}-U_s^{2}$, $\delta f_s:=f^{1}(s,Y_s^{2}, Z_s^{2}, U_s^{2})-f^{2}(s,Y_s^{2}, Z_s^{2}, U_s^{2})$ and $\delta \xi:=\xi^1-\xi^{2}$. Let $\eta, \beta>0$ be such that $\beta \geq 2C+\frac{3}{\eta}$ and $\eta \leq \frac{1}{ C^2}$, then for each $t \in [0,\tau]$ we have
\begin{align}
    |e^{\beta t} \delta Y_t|^{p} \leq  2^{p/2-1}\left(\mathbb{E}\left[|e^{\beta \tau} \delta \xi|^p|\mathcal{F}_t\right]+\eta^p \mathbb{E}\left[\left(\int_t^{\tau} |e^{\beta s} \delta f_s|^2 ds \right)^{p/2}|\mathcal{F}_t\right]\right) \,\,\, \P \text{-a.s.}
\end{align}
\end{Proposition}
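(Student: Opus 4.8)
The plan is to reduce the $L^p$ statement to a conditional second-moment estimate for $\delta Y$ and then raise that estimate to the power $p/2$. First I would record that, by linearity, the triple $(\delta Y,\delta Z,\delta U)$ solves the BSDE
$$\delta Y_t = \delta\xi + \int_t^\tau\big(f^1(s,Y^1_s,Z^1_s,U^1_s)-f^2(s,Y^2_s,Z^2_s,U^2_s)\big)\,ds - \int_t^\tau \delta Z_s\,dB_s - \int_t^\tau\int_{\R^\star}\delta U_s(e)\,\tilde N(ds,de),$$
and I would split the driver difference $\Delta_s:=f^1(s,Y^1_s,Z^1_s,U^1_s)-f^2(s,Y^2_s,Z^2_s,U^2_s)$ as $\big[f^1(s,Y^1_s,Z^1_s,U^1_s)-f^1(s,Y^2_s,Z^2_s,U^2_s)\big]+\delta f_s$, so that the first bracket is controlled by the Lipschitz constant $C$ through $C(|\delta Y_s|+|\delta Z_s|+|\delta U_s|_\nu)$ and the second is exactly the prescribed $\delta f_s$.

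Next I would apply It\^o's formula to $s\mapsto e^{2\beta s}|\delta Y_s|^2$ on $[t,\tau]$. The continuous martingale part contributes $|\delta Z_s|^2\,ds$ to the bracket, while the jump part of the quadratic variation contributes $\int_{\R^\star}|\delta U_s(e)|^2\,N(ds,de)$; splitting $N=\tilde N+\nu\,ds$ turns this into a compensated (martingale) integral plus the drift $|\delta U_s|_\nu^2\,ds$. A point requiring care is that the cross term $2\,\delta Y_{s^-}\delta U_s(e)$ enters only through the compensated measure $\tilde N$, hence carries no drift and leaves no $\nu$-integrated term; this is what keeps the energy identity clean even when $\nu$ is infinite. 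Collecting terms gives
$$e^{2\beta t}|\delta Y_t|^2 + \int_t^\tau e^{2\beta s}\big(2\beta|\delta Y_s|^2+|\delta Z_s|^2+|\delta U_s|_\nu^2\big)\,ds = e^{2\beta\tau}|\delta\xi|^2 + 2\int_t^\tau e^{2\beta s}\delta Y_s\,\Delta_s\,ds - (M_\tau-M_t),$$
with $M$ a local martingale. I would then bound $2\delta Y_s\Delta_s$ via the Lipschitz estimate together with three applications of Young's inequality — one against $|\delta Z_s|^2$, one against $|\delta U_s|_\nu^2$, and one against $|\delta f_s|^2$ — choosing the Young weights so that the $|\delta Z_s|^2$ and $|\delta U_s|_\nu^2$ terms are absorbed by the matching bracket terms (this is where $\eta\le 1/C^2$ is used) and the residual coefficient of $|\delta Y_s|^2$ is dominated by $2\beta$ (this is where $\beta\ge 2C+3/\eta$ is used). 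Dropping the remaining nonnegative bracket integrals and taking $\mathbb{E}[\,\cdot\,|\mathcal F_t]$ — which annihilates $M$ after a standard localization justified by $Y^i\in\mathcal S^p$, $Z^i\in\mathcal H^{p,d}$, $U^i\in\mathcal H^{p}_{\nu}$ — I arrive at a conditional second-moment bound of the form
$$e^{2\beta t}|\delta Y_t|^2 \le \mathbb{E}\Big[e^{2\beta\tau}|\delta\xi|^2 + \eta^2\int_t^\tau e^{2\beta s}|\delta f_s|^2\,ds\,\Big|\,\mathcal F_t\Big].$$

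Finally I would raise this to the power $p/2\ge 1$. The elementary convexity bound $(a+b)^{p/2}\le 2^{p/2-1}(a^{p/2}+b^{p/2})$ produces the universal prefactor $2^{p/2-1}$, and conditional Jensen applied to $x\mapsto x^{p/2}$ moves the exponent inside each conditional expectation, giving $\big(\mathbb{E}[e^{2\beta\tau}|\delta\xi|^2\,|\,\mathcal F_t]\big)^{p/2}\le \mathbb{E}[|e^{\beta\tau}\delta\xi|^p\,|\,\mathcal F_t]$ and $\big(\mathbb{E}[\int_t^\tau e^{2\beta s}|\delta f_s|^2\,ds\,|\,\mathcal F_t]\big)^{p/2}\le \mathbb{E}[(\int_t^\tau|e^{\beta s}\delta f_s|^2\,ds)^{p/2}\,|\,\mathcal F_t]$, which together yield exactly the stated estimate. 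The hard part is not any single step but the exact constant bookkeeping in the Young/absorption step: one must check that $\eta\le 1/C^2$ and $\beta\ge 2C+3/\eta$ are \emph{simultaneously} sufficient to kill every $|\delta Z|^2$, $|\delta U|_\nu^2$ and residual $|\delta Y|^2$ contribution while pinning the advertised power of $\eta$ on the $|\delta f|^2$ term — so that after the convexity step one lands on precisely $2^{p/2-1}$ and the stated $\eta$-exponent. The secondary technical issue is the localization that discards the local-martingale term under merely $L^p$ (not $L^2$) integrability, handled by stopping along a reducing sequence and passing to the limit using the $\mathcal S^p$, $\mathcal H^{p,d}$, $\mathcal H^{p}_{\nu}$ bounds together with dominated convergence.
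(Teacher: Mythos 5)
Your proof is correct and follows essentially the same route as the paper: the paper simply cites \cite{dqs15} for the conditional $L^2$ estimate (the It\^o--Young--absorption step with $\eta \leq 1/C^2$ and $\beta \geq 2C + 3/\eta$) that you carry out explicitly, and then, exactly as you do, raises it to the power $p/2$ using the convexity bound $(a+b)^{p/2} \leq 2^{p/2-1}(a^{p/2}+b^{p/2})$ together with conditional Jensen/H\"older to move the exponent inside the conditional expectations. Your explicit treatment of the jump terms and the localization argument fills in details the paper leaves to the cited reference, but introduces no new idea.
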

\begin{proof}
Recall that, by standard estimates obtained by applying It\^o's formula to $e^{\beta t}|\delta Y_t|^2$, we derive that for $\beta \geq \frac{3}{\eta}+2C$ and $\eta \leq \frac{1}{C^2}$ (see \cite{dqs15}), we have
\begin{align}
    |e^{\beta t} \delta Y_t|^{2} \leq \mathbb{E}\left[|e^{\beta \tau} \delta \xi|^2|\mathcal{F}_t\right]+\eta^2 \mathbb{E}\left[\left(\int_t^{\tau} |e^{\beta s} \delta f_s|^2 ds \right)|\mathcal{F}_t\right] \,\,\, \P \text{-a.s.},
\end{align}
from which it follows that 
\begin{align}
    |e^{\beta t} \delta Y_t|^{p} \leq \left(\mathbb{E}\left[|e^{\beta \tau} \delta \xi|^2|\mathcal{F}_t\right]+\eta^2 \mathbb{E}\left[\left(\int_t^{\tau} |e^{\beta s} \delta f_s|^2 ds \right)|\mathcal{F}_t\right]\right)^{p/2} \,\,\, \P \text{-a.s.},
\end{align}
which leads to, by convexity relation and H{\"o}lder inequality,
\begin{align}
    |e^{\beta t} \delta Y_t|^{p} \leq 2^{p/2-1}\left(\mathbb{E}\left[|e^{\beta \tau} \delta \xi|^p|\mathcal{F}_t\right]+\eta^p\mathbb{E}\left[\left(\int_t^{\tau} |e^{\beta s} \delta f_s|^2 ds \right)^{p/2}|\mathcal{F}_t\right]\right) \,\,\, \P \text{-a.s.}
\end{align}
\end{proof}

We now provide the following estimates on the solution of a doubly reflected BSDEs.

\begin{Proposition}\label{KK}
Let $p \geq 2$. Let $h_1$ and $h_2$ two right-continuous left limited processes in $\mathcal{S}^p$ such that $h_1(t) \leq h_2(t)$ $\mathbb{P}$-a.s. for all $t \in [0,T]$ and $h_1$ and $h_2$ satisfy Mokobodzki's condition, i.e. there exist two non-negative supermartingales $\theta_1, \theta_2$ in $\mathcal{S}^p$  such that
\begin{equation}\label{M}
h_1(t) \le \theta_1(t)-\theta_2(t)\le h_2(t),\quad 0\le t\le T,\,\,\,  \P\text{-a.s.}
\end{equation}
Let $f$ be a Lipschitz driver, $\xi \in L^p(\mathcal{F}_T)$ and $(Y,Z,U,K^1,K^2) \in \mathcal{S}^p \times \mathcal{H}^p \times \mathcal{H}_\nu^p \times \mathcal{S}_{i}^{p} \times \mathcal{S}_{i}^{p}$ the unique solution of the doubly reflected BSDE associated with terminal time $T$, terminal value $\xi$, driver $f$ and obstacles $h_1$ and $h_2$. Then, we have
\begin{align}
    &\mathbb{E}\left[\left(\int_0^T |Z_s|^2 ds \right)^{p/2}+\left(\int_0^T \int_{\mathbb{R}^*} |U_s|_\nu^2ds \right)^{p/2}+(K_T^1)^{p} +(K_T^2)^{p}\right] \nonumber \\ & \leq C \left(\mathbb{E}[|\xi|^p]+\mathbb{E}\left[\left(\int_0^T |f(s,0,0,0)|^2ds\right)^{p/2}\right]+\|Y\|^p_{\mathcal{S}^p}+\|\theta_1\|^p_{\mathcal{S}^p}+\|\theta_2\|^p_{\mathcal{S}^p}+\|h_1\|^p_{\mathcal{S}^p}+\|h_2\|^p_{\mathcal{S}^p}\right),
\end{align}
for some constant $C$ which only depends on $p$ and $T$.
\end{Proposition}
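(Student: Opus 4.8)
The plan is to derive the estimate by combining an energy identity (Itô's formula applied to $|Y|^2$) with the $L^p$ a priori bound of Proposition~\ref{estimates}, the genuinely new ingredient being the \emph{separate} control of the two mutually singular increasing processes $K^1$ and $K^2$, which I would extract from Mokobodzki's condition~\eqref{M}. Throughout, the Lipschitz property of $f$ lets me replace $f(s,Y_s,Z_s,U_s)$ by $f(s,0,0,0)$ at the cost of the terms $C(|Y_s|+|Z_s|+|U_s|_\nu)$, which are either controlled by $\|Y\|_{\mathcal{S}^p}$ or absorbed into the left-hand side as below.

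First I would apply Itô's formula to $|Y_t|^2$ on $[t,T]$ to obtain the energy identity relating $\int_0^T|Z_s|^2\,ds$, $\int_0^T\int_{\mathbb{R}^*}|U_s(e)|^2N(ds,de)$, $|Y_0|^2$, the driver contribution, the two local-martingale terms $\int Y_sZ_s\,dB_s$ and $\int\int Y_{s^-}U_s\,\tilde N$, and the reflection contributions $\int_0^T Y_{s^-}\,dK^1_s$ and $\int_0^T Y_{s^-}\,dK^2_s$. The Skorokhod (flat) conditions let me replace $Y_{s^-}$ by $h_1(s^-)$ in the $dK^1$ integral and by $h_2(s^-)$ in the $dK^2$ integral, so these terms are dominated by $(\sup_s|h_1(s)|)\,K^1_T$ and $(\sup_s|h_2(s)|)\,K^2_T$. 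Taking $p/2$-powers and expectations, applying the Burkholder--Davis--Gundy inequality to the martingale terms, and using Young's inequality to absorb the resulting $\epsilon\,\mathbb{E}[(\int|Z|^2)^{p/2}]$ and $\epsilon\,\mathbb{E}[(\int|U|_\nu^2)^{p/2}]$ into the left-hand side, I would reach
\[
\mathbb{E}\left[\left(\int_0^T|Z_s|^2ds\right)^{p/2}+\left(\int_0^T|U_s|_\nu^2ds\right)^{p/2}\right]\le C\left(\mathbb{E}[|\xi|^p]+\mathbb{E}\left[\left(\int_0^T|f(s,0,0,0)|^2ds\right)^{p/2}\right]+\|Y\|_{\mathcal{S}^p}^p+\mathbb{E}\left[(K^1_T)^p+(K^2_T)^p\right]\right),
\]
so that everything reduces to bounding the two increasing processes.

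This last step is the main obstacle, and the one where~\eqref{M} is indispensable. Reading the BSDE backwards yields only the \emph{difference} $K^1_T-K^2_T=Y_0-\xi-\int_0^Tf\,ds+\int_0^TZ\,dB+\int_0^T\int U\,\tilde N$, whose $L^p$ norm is controlled; since $dK^1\perp dK^2$ the sum is not determined by the difference. To separate them I would set $V:=\theta_1-\theta_2$, which by~\eqref{M} satisfies $h_1\le V\le h_2$, with Doob--Meyer decompositions $\theta_i=\theta_i(0)+M_i-A_i$, $M_i$ martingales and $A_i$ predictable increasing. Because $dK^1$ charges only $\{Y_{s^-}=h_1(s^-)\}$ and $dK^2$ only $\{Y_{s^-}=h_2(s^-)\}$, and $h_1\le V\le h_2$, the integrals $\int_0^T(V_{s^-}-Y_{s^-})\,dK^1_s\ge0$ and $\int_0^T(V_{s^-}-Y_{s^-})\,dK^2_s\le0$ have definite signs. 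Integrating by parts the semimartingale $Y-V$ against $d(K^1+K^2)$ (equivalently applying Itô to $(Y-V)^2$) and invoking these signs, the increments of $K^1$ and $K^2$ become dominated by $\|Y\|_{\mathcal{S}^p}$, by $A_1(T),A_2(T)$, by the driver term, and by the $Z,U$ energy terms, giving
\[
\mathbb{E}\left[(K^1_T)^p+(K^2_T)^p\right]\le C\left(\mathbb{E}[|\xi|^p]+\mathbb{E}\left[\left(\int_0^T|f(s,0,0,0)|^2ds\right)^{p/2}\right]+\|Y\|_{\mathcal{S}^p}^p+\|\theta_1\|_{\mathcal{S}^p}^p+\|\theta_2\|_{\mathcal{S}^p}^p+\|h_1\|_{\mathcal{S}^p}^p+\|h_2\|_{\mathcal{S}^p}^p\right)+\frac12\,\mathbb{E}\left[\left(\int_0^T|Z_s|^2ds\right)^{p/2}+\left(\int_0^T|U_s|_\nu^2ds\right)^{p/2}\right],
\]
the last bracket being reabsorbed via the previous display. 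Here the $\mathcal{S}^p$ norms of $\theta_1,\theta_2$ enter because $A_i(T)$ is controlled in $L^p$ by $\|\theta_i\|_{\mathcal{S}^p}$ through a classical Garsia-type energy inequality for nonnegative supermartingales. I expect the delicate points to be the bookkeeping between the powers $p/2$ and $p$, forcing repeated Young inequalities with a small parameter to close the loop, and precisely verifying this $L^p$ control of the compensators $A_i$ from $\theta_i\in\mathcal{S}^p$; feeding the $K$-estimate back into the $Z,U$-estimate and summing then yields the claim with a constant depending only on $p$ and $T$.
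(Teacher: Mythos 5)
Your first half (It\^o's formula applied to $|Y|^2$, replacing $Y_{s^-}$ by $h_i(s^-)$ on the supports of $dK^i$ via the Skorokhod conditions, then Burkholder--Davis--Gundy and Young's inequality to absorb the $Z$ and $U$ terms, reducing everything to a bound on $\mathbb{E}[(K^1_T)^p+(K^2_T)^p]$) is exactly Step 2 of the paper's proof. The gap is in the key step, the separate $L^p$ bounds on $K^1_T$ and $K^2_T$. Your sign observations are correct, but what they control are the weighted quantities $\int_0^T(V_{s^-}-Y_{s^-})\,dK^1_s\ge 0$ and $\int_0^T(Y_{s^-}-V_{s^-})\,dK^2_s\ge 0$ (note also that pairing $V-Y$ against $d(K^1+K^2)$, as you propose, adds a nonnegative term to a nonpositive one and has no definite sign). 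Bounding $\int_0^T(V_{s^-}-Y_{s^-})\,dK^1_s$ gives no information on $K^1_T$ itself: Mokobodzki's condition only says $h_1\le V\le h_2$, so $V$ may touch $h_1$ precisely on the support of $dK^1$ (where $Y_{s^-}=h_1(s^-)$), making your integrand vanish exactly where $dK^1$ carries its mass. Turning your estimate into one on $K^1_T$ would require $V$ to stay strictly away from the barriers, i.e.\ a strict-separation hypothesis that is not assumed; the same degeneracy defeats variants via Tanaka's formula applied to $(Y-V)^{\pm}$.

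The paper's Step 1 proceeds by a different mechanism: it shifts the barriers by the martingale $\mathbb{E}[\xi+\int_\cdot^T f(s)ds\,|\,\mathcal{F}_\cdot]$, builds nonnegative supermartingales $\Theta_1,\Theta_2$ (incorporating $\theta_i$, $\xi^{\mp}$ and $f^{\mp}$) that dominate the shifted barriers in the Mokobodzki sense, and represents the (shifted) solution as a difference $Y_1-Y_2$ of two nonnegative supermartingales obtained as monotone limits of iterated Snell envelopes, with $0\le Y_i\le\Theta_i$. By uniqueness of the solution of the doubly reflected BSDE, the Doob--Meyer compensators of $Y_1$ and $Y_2$ are exactly $K^1$ and $K^2$, and the Dellacherie--Meyer inequality $\mathbb{E}[A_T^p]\le p^p\,\mathbb{E}[\sup_t X_t^p]$ for a nonnegative supermartingale $X=M-A$ --- the ``Garsia-type'' inequality you invoke, but applied to $Y_1,Y_2$ rather than to $\theta_1,\theta_2$ --- yields $\mathbb{E}[(K^i_T)^p]\le p^p\|\Theta_i\|^p_{\mathcal{S}^p}$. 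Bounding the compensators $A_i$ of $\theta_i$, as you suggest, does not help, since those processes bear no direct relation to $K^1,K^2$.
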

\begin{proof}
\underline{\textit{Step 1.}} We first provide bounds on the increasing processes $K^1$ and $K^2$. More precisely, we show that
\begin{equation}\label{K}
\E[(K^1_T)^p]\le p^p \|\Theta_1\|^p_{\mathcal{S}^p},\quad  \E[(K^2_T)^p]\le p^p \|\Theta_2\|^p_{\mathcal{S}^p},
\end{equation}
where
$$
\begin{array}{lll}
\Theta_1(t):=\left(\theta_1(t)+\E[\xi^-| \mathcal{F}_t]\right)\ind_{\{t<T\}}+\E[\int_t^T f^-(s)ds\,|\,\mathcal{F}_t],\\
\Theta_2(t):=\left(\theta_2(t)+\E[\xi^+| \mathcal{F}_t]\right)\ind_{\{t<T\}}+\E[\int_t^T f^+(s)ds\,|\,\mathcal{F}_t], \\
f(t):=f(t,Y_t,Z_t,U_t),
\end{array}
$$
where $x^+:=\max(x,0)$ and $x^-=x^+-x$.

\medskip
\begin{comment}
%Denote  $f(t):=f(t,Y_t,Z_t,U_t)$
%\Theta_1(t):=\left(\theta_1(t)+\E[\xi^-| \mathcal{F}_t]\right)\ind_{\{t<T\}}+\E[\int_t^T f^-(s)ds\,|\,\mathcal{F}_t],\\
%\Theta_2(t):=\left(\theta_2(t)+\E[\xi^+| \mathcal{F}_t]\right)\ind_{\{t<T\}}+\E[\int_t^T f^+(s)ds\,|\,\mathcal{F}_t], \\
\end{comment}

Consider the processes defined by
$$
\begin{array}{lll}
H_1(t):=h_1(t)\ind_{\{t<T\}}+\xi\ind_{\{t=T\}}-\E[\xi+\int_t^T f(s)ds\,|\,\mathcal{F}_t],\\
H_2(t):=h_2(t)\ind_{\{t<T\}}+\xi\ind_{\{t=T\}}-\E[\xi+\int_t^T f(s)ds\,|\,\mathcal{F}_t].
\end{array}
$$
Then, $\Theta_1$ and $\Theta_2$ are non-negative supermartingales in $\mathcal{S}^p$ satisfying $\Theta_1(T)=\Theta_2(T)=0$. 

\noindent Moreover,
\begin{equation}\label{Theta}
H_1(t) \le \Theta_1(t)-\Theta_2(t)\le H_2(t),\quad 0\le t\le T,\quad \P\text{-a.s.} 
\end{equation}
Now, consider the sequence $(Y^+_n,Y^-_n)$ of processes  defined recursively as Snell envelops of processes as follows

$$\begin{array}{lll}
Y^+_{n+1}(t)=\underset{\tau\in\mathcal{T}_t}{\esssup\,}\E[Y^-_n(\tau)+H_1(\tau)|\mathcal{F}_t],\quad Y^+_0(\cdot)=0,\\ Y^-_{n+1}(t)=\underset{\tau\in\mathcal{T}_t}{\esssup\,}\E[Y^+_n(\tau)-H_2(\tau)|\mathcal{F}_t],\quad Y^-_0(\cdot)=0.
\end{array}
$$
In view of \eqref{Theta} and the properties of the Snell envelope of processes, it is easily checked that 
\begin{equation}\label{mono}
0\le Y^+_n(t)\le Y^+_{n+1}(t)\le \Theta_1(t),\quad 0\le Y^-_n(t)\le Y^-_{n+1}(t)\le \Theta_2(t),\quad 0\le t\le T,\quad \P\text{-a.s.}
\end{equation}
Therefore, the sequence $(Y^+_n)_n$ (resp. $(Y^-_n)_n$) converges pointwisely to a non-negative supermartingale $Y_1\in\mathcal{S}^p$ (resp.  $Y_2\in\mathcal{S}^p$). Moreover, $Y_1$ and $Y_2$ satisfy
\begin{equation}\label{mono}
0\le Y_1(t)\le \Theta_1(t),\quad 0\le  Y_2(t)\le \Theta_2(t),\quad 0\le t\le T,\quad \P\text{-a.s.},
\end{equation}
and 
\begin{equation}\label{Y-1-2}
\begin{array}{lll}
Y_1(t)=\underset{\tau\in\mathcal{T}_t}{\esssup\,}\E[Y_2(\tau)+H_1(\tau)|\mathcal{F}_t],\quad Y_1(T)=0,\\ Y_2(t)=\underset{\tau\in\mathcal{T}_t}{\esssup\,}\E[Y_1(\tau)-H_2(\tau)|\mathcal{F}_t],\quad Y_2(T)=0.
\end{array}
\end{equation}
Hence,
\begin{equation}\label{Y-H}
H_1(t)\le Y_1(t)-Y_2(t)\le H_2(t),\quad 0\le t\le T,\quad \P\text{-a.s.}
\end{equation}
By the Doob-Meyer decomposition, we have
$$Y_1(t)=M_1(t)-K_1(t),\quad Y_2(t)=M_2(t)-K_2(t),
$$
where $M_1$ and $M_2$ are c{\`a}dl{\`a}g martingales and $K_1, K_2$ are non-decreasing processes such that $K_1(0)=K_2(0)=0$. Furthermore, by a classical inequality for non-negative supermartingales (see e.g. Inequality (100.3) in \cite[p. 183]{DM82}), it holds that
\begin{equation}\label{K-2-2}
\begin{array}{lll}
\E[(K_1(T))^p]\le p^p \|Y_1\|^p_{\mathcal{S}^p}\le p^p \,\underset{n\to\infty}{\liminf}\,\|Y^+_n\|^p_{\mathcal{S}^p}\le p^p \|\Theta_1\|^p_{\mathcal{S}^p},\\
\\E[(K_2(T))^p]\le p^p \|Y_2\|^p_{\mathcal{S}^p}\le p^p \,\underset{n\to\infty}{\liminf}\,\|Y^-_n\|^p_{\mathcal{S}^p}\le p^p \|\Theta_2\|^p_{\mathcal{S}^p},
\end{array}
\end{equation}
which entails that $M_1$ and $M_1$ belong to $\mathcal{S}^p$. Thus, by the Martingale representation theorem, there exist unique processes $Z_1,Z_2$ in $\mathcal{H}^{p,d}$ and $U_1,U_2$ in $\mathcal{H}_{\nu}^p$ such that
$$
M_i(t)=Y_{i}(0)+\int_0^tZ_i(s)dB_s+\int_0^t\int_{R^*}U_i(s,e)\widetilde{N}(ds,de),\quad 0\le t\le T,\quad i=1,2.
$$
In view of \eqref{Y-1-2}, the by now standard arguments for Snell envelops yield that $Y_1-Y_2$ satisfies the following Skorohod flatness condition 
$$
\int_0^T (Y_1(t^-)-Y_2(t^-)-H_1(t^-))dK_1(t)=0,\quad \int_0^T (Y_1(t^-)-Y_2(t^-)-H_2(t^-))dK_2(t)=0.
$$
On the other hand, by the Martingale representation theorem applied to the martingale $\E[\xi+\int_0^T f(s)ds|\mathcal{F}_t]$, there exist unique processes $Z_3$ in $\mathcal{H}^{p,d}$ and $U_3$ in $\mathcal{H}_{\nu}^p$ such that
$$
\E[\xi+\int_t^T f(s)ds|\mathcal{F}_t]=\E[\xi+\int_0^T f(s)ds]+\int_0^tZ_3(s)dB_s+\int_0^t\int_{R^*}U_3(s,e)\widetilde{N}(ds,de)-\int_0^t f(s)ds.
$$
Set
$$
\widehat{Y}(t):=Y_1(t)-Y_2(t)+\E[\xi+\int_t^T f(s)ds|\mathcal{F}_t],\,\, \widehat{Z}(t):=Z_1(t)-Z_2(t)+Z_3(t),\,\, \widehat{U}(t):=U_1(t)-U_2(t)+U_3(t).
$$
Then, it is easy to check that the process $(\widehat{Y}, \widehat{Z},\widehat{U}, K_1,K_2)$  is a solution to the  doubly reflected BSDE associated with $(f(t), h_1(t),h_2(t))$. By uniqueness of the solution to \eqref{BSDE1}, we must have
$$
(\widehat{Y}, \widehat{Z},\widehat{U}, K_1,K_2)=(Y,Z,U,K^1,K^2).
$$ 
In particular, $K^1$ and $K^2$ satisfy the inequalities \eqref{K-2-2}. 

\underline{\textit{Step 2.}} We will now give estimates on the whole solution $(Y,Z,U,K^1,K^2)$. By applying It\^o's formula on $|Y_t|^2$, we get
\begin{align*}
    &|Y_t|^2+\int_t^T |Z_s|^2ds+\int_t^T\int_{\R^*}\sum_{j=1}^n|U_s(e)|^2 N^j(ds,de)+\sum_{t<s\leq T}|\Delta K_s^{1}-\Delta K_s^{2}|^2=  |\xi|^2 \nonumber \\
    &  + 2\int_t^T Y_s f(s) ds - 2 \int_t^T Y_s \sum_{j=1}^n Z_s dB^j_s - 2 \int_t^T \int_{\R^*} Y_{s^-} \sum_{j=1}^n U_s(e) \Tilde{N}^j(ds,de) \nonumber \\ & +2\int_t^T Y_{s^-} dK_s^{1}-2\int_t^T Y_{s^-} dK_s^{2} \nonumber \\
    &= |\xi|^2 + 2\int_t^T Y_s f(s) ds - 2 \int_t^T Y_s \sum_{j=1}^n Z_s dB^j_s - 2 \int_t^T \int_{\R^*} Y_{s^-} \sum_{j=1}^n U_s(e) \Tilde{N}^j(ds,de) \nonumber \\ & +2\int_t^T h_1(s^-) dK_s^{1}-2\int_t^T h_2(s^-) dK_s^{2}.
\end{align*}
By the Lipschitz property of $f$ and Young's inequality, we have, for all $\varepsilon>0$,
\begin{align}
    \int_t^T f(s) Y_sds \leq 2\int_t^T |f(s,0,0,0)|^2ds+\left(2+2C_f^2 \varepsilon^{-1}\right)\int_t^T |Y_s|^2ds+\varepsilon \int_t^T Z_s^2ds+\varepsilon \int_t^T |U_s|_\nu^2ds.
\end{align}
We  obtain that, for a constant $C_p>0$ and $0 \leq t \leq T$, the following inequality holds
\begin{align*}
    &|Y_t|^{p}+\left(\int_t^T |Z_s|^2ds\right)^{p/2}+\left(\int_t^T\int_{\R^*}\sum_{j=1}^n|U_s(e)|^2 N^j(ds,de)\right)^{p/2} \leq  C_p|\xi|^2 \nonumber \\
    &  + C_p\left(\int_t^T |f(s,0,0,0)|^2ds\right)^{p/2} + C_p\left(2+2C_f^2 \varepsilon^{-1}\right)^{p/2}\left(\int_t^T |Y_s|^2ds\right)^{p/2}+C_p\varepsilon^{p/2}\left(\int_t^T |Z_s|^2ds\right)^{p/2} \nonumber \\ & +C_p\varepsilon^{p/2}\left(\int_t^T |U_s|_\nu^2ds\right)^{p/2}+ C_p \left|\int_t^T Y_s \sum_{j=1}^n Z_s dB^j_s\right|^{p/2} + C_p \left|\int_t^T \int_{\R^*} Y_{s^-} \sum_{j=1}^n U_s(e) \Tilde{N}^j(ds,de)\right|^{p/2} \nonumber \\ & +C_p \left|\int_t^T h_1(s^-) dK_s^{1}\right|^{p/2}+C_p \left|\int_t^T h_2(s^-) dK_s^{2}\right|^{p/2}.
\end{align*}
Therefore, for any $\varepsilon>0$, and a possibly different constant $C_p$, we have
\begin{align}\label{in0}
    &\left(\int_0^T |Z_s|^2ds\right)^{p/2}+\left(\int_0^T\int_{\R^*}\sum_{j=1}^n|U_s(e)|^2 N^j(ds,de)\right)^{p/2} \leq  C_p|\xi|^2 \nonumber \\
    &  + C_p\left(\int_0^T |f(s,0,0,0)|^2ds\right)^{p/2} + T^pC_p\left(2+2C_f^2 \varepsilon^{-1}\right)^{p/2}\sup_{0 \leq s \leq T} |Y_s|^p+C_p\varepsilon^{p/2}\left(\int_0^T |Z_s|^2ds\right)^{p/2} \nonumber \\ & +C_p\varepsilon^{p/2}\left(\int_0^T |U_s|_\nu^2ds\right)^{p/2}+ C_p \left|\int_0^T Y_s \sum_{j=1}^n Z_s dB^j_s\right|^{p/2} + C_p \left|\int_0^T \int_{\R^*} Y_{s^-} \sum_{j=1}^n U_s(e) \Tilde{N}^j(ds,de)\right|^{p/2} \nonumber \\ & +C_p\varepsilon^{-p/2} \left[\left(\sup_{0 \leq t \leq T} |h^1(t)|^p+\sup_{0 \leq t \leq T} |h^2(t)|^p\right)\right]+C_p\varepsilon^{p/2} \mathbb{E}\left[  (K_T^1)^p+(K_T^2)^p\right].
\end{align}
Now, by applying the Burkholder-Davis-Gundy inequality, for some constants $d_p>0$ and $e_p>0$, we have
\begin{align}\label{d1}
C_p \mathbb{E}\left[\left|\int_0^T Y_s \sum_{j=1}^n Z_s^{j} dB^j_s \right|^{\frac{p}{2}}\right]  & \leq d_p \mathbb{E} \left[\left(\int_0^T  Y_s^2 |Z_s|^2 ds\right)^{\frac{p}{4}}\right] \\ & \leq \frac{d^2_p}{2} \| Y_s\|^p_{\mathcal{S}^p}+\frac{1}{2}\mathbb{E}\left[\left(\int_0^T |Z_s|^2ds\right)^{\frac{p}{2}}\right]
\end{align}
and
\begin{align}\label{d2}
C_p \mathbb{E}\left[\left|\int_0^T \int_{\R^*} Y_{s^-} \sum_{j=1}^n U_s^{j}(e) \Tilde{N}^j(ds,de) \right|^{\frac{p}{2}}\right] \leq e_p \mathbb{E} \left[\left(\int_0^T Y_{s^-}^2 \int_{\R^*}\sum_{j=1}^n(U_s^{j}(e))^2 N^j(ds,de)\right)^{\frac{p}{4}}\right] \nonumber \\ \qquad\qquad \leq \frac{e^2_p}{2} \| Y_s\|^p_{\mathcal{S}^p}+\frac{1}{2}\mathbb{E}\left[\left(\int_0^T \int_{\R^*}\sum_{j=1}^n(U^{j}_s(e))^2 N^j(ds,de)\right)^{\frac{p}{2}}\right].
\end{align}
Furthermore, for some constant $l_p>0$ we have:
\begin{align}\label{d3}
    \mathbb{E}\left[\left(\int_0^T \int_{\R^*}|U_s(e)|^2\nu(de)ds\right)^{\frac{p}{2}}\right] \leq l_p \mathbb{E}\left[\left(\int_0^T \int_{\R^*} \sum_{j=1}^n |U^{j}_s(e)|^2N^j(de,ds)\right)^{\frac{p}{2}}\right].
\end{align}
By taking the expectation in \eqref{in0}, and using \eqref{K}, \eqref{d1}, \eqref{d2} and \eqref{d3}, for small enough $\varepsilon$, the result follows.

\qed

\end{proof}

%%%%%%%%%%%%%%%

{}

\end{document}